\newtheorem{theorem}{Theorem}[section]
\newtheorem{lemma}[theorem]{Lemma}
\newtheorem{proposition}[theorem]{Proposition}
\newcommand{\Ex}{\mathbb E}
\newcommand{\cA}{\mathcal{A}}
\newcommand{\cB}{\mathcal{B}}
\newcommand{\cC}{\mathcal{C}}
\newcommand{\cD}{\mathcal{D}}
\newcommand{\cP}{\mathcal{P}}
\newcommand{\cS}{\mathcal{S}}
\newcommand{\cT}{\mathcal{T}}
\newcommand{\cbT}{\mathcal{\overline{T}}}
\newcommand{\bD}{\overline{D}}
\newcommand{\bP}{\overline{P}}
\newcommand{\bS}{\overline{S}}
\newcommand{\brho}{\overline{\rho}}
\newcommand{\tD}{\widetilde{D}}
\newcommand{\tP}{\widetilde{P}}
\newcommand{\tS}{\widetilde{S}}
\newcommand{\hD}{\hat{D}}
\newcommand{\hP}{\hat{P}}
\newcommand{\hS}{\hat{S}}
\newcommand{\cbD}{\mathcal{\overline{D}}}
\newcommand{\cbS}{\mathcal{\overline{S}}}
\newcommand{\cbP}{\mathcal{\overline{P}}}
\newcommand{\ctD}{\mathcal{\widetilde{D}}}
\newcommand{\ctS}{\mathcal{\widetilde{S}}}
\newcommand{\ctP}{\mathcal{\widetilde{P}}}
\newcommand{\chD}{\mathcal{\hat{D}}}
\newcommand{\chS}{\mathcal{\hat{S}}}
\newcommand{\chP}{\mathcal{\hat{P}}}
\newcommand{\bigO}{\mathcal O}
\DeclareMathOperator{\Arg}{Arg}
\DeclareMathOperator{\De}{\Delta e}
\newcommand{\cTs}{\mathcal{T}^s}
\begin{document}
\title[Spanning trees in random series-parallel graphs]{Spanning trees in random series-parallel graphs}

\author{Julia Ehrenm\"uller}
\address{(JE) Technische Universitat Hamburg-Harburg, Institut f\"ur Mathematik, Am Schwarzenberg-Campus 3, 21073 Hamburg, Germany }
\email{julia.ehrenmueller@tuhh.de}

\author{Juanjo Ru\'e}
\address{(JR) Freie Universit\"at Berlin, Institut f\"ur Mathematik und Informatik, Arnimallee 3, 14195 Berlin, Germany}
\email{jrue@zedat.fu-berlin.de}

\thanks{A preliminary version of the results of this paper was presented at the \emph{Bordeaux Graph Workshop} held in Bordeaux in November 2014.
J.\,R.~was partially supported by the FP7-PEOPLE-2013-CIG project CountGraph (ref. 630749), the Spanish MICINN projects MTM2014-54745-P and MTM2014-56350-P, the DFG within the Research Training Group \emph{Methods for Discrete Structures} (ref. GRK1408), and the \emph{Berlin Mathematical School}.
}




\begin{abstract}
By means of analytic techniques we show that the expected number of spanning trees in a connected labelled series-parallel graph on $n$ vertices chosen uniformly at random satisfies an estimate of the form
$$s \varrho^{-n} (1+o(1)),$$
where $s$ and $\varrho$ are computable constants, the values of which are approximately $s \approx 0.09063$ and $\varrho^{-1} \approx 2.08415$.
We obtain analogue results for subfamilies of series-parallel graphs including 2-connected series-parallel graphs, 2-trees, and series-parallel graphs with fixed excess.
\end{abstract}
\maketitle



\section{Introduction}

The study of spanning trees and their enumeration is a central question in graph theory and in combinatorial optimization.
It is well known that the number of spanning trees of a given graph $G$ is an evaluation of its Tutte polynomial (see for instance \cite{bi93}).
A lot of research has been devoted to study estimates of this number when dealing with restricted graph families.
For instance, various results have been obtained for regular graphs and for graphs with degree constraints (see e.g.~\cite{Alon90, Kost95, Ly05, McKay83}).

The enumeration of graphs with a distinguished spanning tree has also been studied extensively in the context of planar maps (namely, embedded connected planar graphs in the sphere, see Schaeffer's Chapter at \cite{handbook15} for an introduction to this area).
The first result in this context was obtained in the sixties by Mullin who was studying the number of rooted planar maps on $n$ edges with a distinguished spanning tree \cite{Mullin67}.
Mullin determined that this number is $C_n C_{n+1}$, where $C_n$ stands for the $n$-th Catalan number.
Such formula was explained later by Cori, Dulucq and Viennot by means of Baxter permutations~\cite{CoDuVi86},
and by Bernardi using a direct bijection with pairs of plane trees with $n$ and $n+1$ edges, respectively (see \cite{Bern07}).
Recently, Bousquet-M\'elou and Courtiel investigated the enumeration of regular planar maps carrying a distinguished spanning \emph{forest}, as well as the connections of these counting formulas with statistical models as the Potts model~\cite{BousquetMelou15} (see also \cite{Bernardi08} for the connection of spanning trees on maps and the Tutte polynomial).

In this paper we study spanning trees in series-parallel graphs. A graph is \emph{series-parallel} (or SP for short) if it is $K_4$-minor free. Over the past few decades, SP graphs have been extensively studied from various points of view both in graph theory and computer science.
In particular, being a subclass of planar graphs and a superclass of outerplanar graphs, SP graphs turned out to serve well as a pre-stage for the analysis and study of problems on planar graphs.
Indeed, the family of SP graphs is the prototype of the so-called \emph{subcritical} graph class family (see e.g.~\cite{Drmota2013, GiNoyRue2013}).
In a typical connected graph in such a family, maximal 2-connected subgraphs (also called \emph{blocks}) are small compared with the total size of the graph.
This behaviour arises as a consequence of a subcritical composition phenomenon which appears in the specification of the generating functions associated with connected graphs of the family.

In this paper we focus on enumerative problems defined on SP graphs.
To this purpose, let us quickly state the following alternative definition of SP graphs that gives more insight into their structure and also justifies their name.
Let $G$ be a graph and let $s$ and $t$ be two of its vertices.
We say $G$ is \emph{series-parallel with terminals $s$ and~$t$}
if $G$ can be turned into the single edge $\{s,t\}$ by a sequence of the following operations:
replacement of a pair of \emph{parallel edges} (i.e.~edges that have two common endpoints) by a single edge,
or replacement of a pair of \emph{series edges} (i.e.~non-parallel edges that have a common endpoint of degree 2) by a single edge.
A graph~$G$ is \emph{2-terminal series-parallel} if there are vertices $s$ and~$t$ in~$G$ such that~$G$ is series-parallel with terminals~$s$ and~$t$.
Finally, a graph $G$ is series-parallel if and only if each of its 2-connected components is a 2-terminal series-parallel graph (see e.g.~\cite{BrandstadtLS99}).

Also, SP graphs are known to be the class of graphs of treewidth at most 2 (see e.g.~\cite{BrandstadtLS99}).
Edge-maximal SP graphs (i.e.~graphs which cease to be SP whenever any missing edge is added) are exactly the class of 2-trees, which can be defined in the following way.
A single edge is a 2-tree. If $T$ is not a single edge,
then $T$ is a 2-tree if and only if there exists a vertex $v$ of degree~2 such that its neighbours are adjacent and $T-v$ is also a 2-tree.
Conversely, every subgraph of a 2-tree is a SP graph. In particular, SP graphs are at most 2-connected since 2-connected SP graphs always contain a vertex of degree 2.

From now on, unless stated otherwise, all graphs under study are labelled and simple.
By a random object of a given family we mean an object chosen uniformly at random from all the elements of the same size, e.g.~graphs on the same number of vertices. In the present paper we study enumerative properties of spanning trees and spanning forests on random SP graphs. Before stating our results, let us survey some relevant related investigations.

One can easily verify that the number of edges of an $n$-vertex 2-tree is precisely $2n-3$.
In the same context, Moon~\cite{Moon1970} showed that the number of 2-trees on $n$ vertices is equal to ${n\choose 2}(2n-3)^{n-4}$.
The enumeration of SP graphs is, however, more involved.
Bodirsky, Gim\'enez, Kang, and Noy proved in \cite{Bodirsky2007} that the number of connected SP graphs on $n$ vertices is asymptotically of the form
\begin{equation*}
\label{eq:numberSP}
c_{s} n^{-5/2}\varrho_{s}^{-n} n!,
\end{equation*}
where $c_s\approx 0.00679$ and $\varrho_s \approx 0.11021$ are computable constants. In the same paper they showed that the number of edges in a random connected SP graph is asymptotically normally distributed with mean asymptotically equal to $\kappa n $ and variance asymptotically equal to $\lambda n$, where $\kappa \approx 1.61673$ and $\lambda \approx 0.2112$ are again computable constants.

Building on these results, a lot of research has been done to understand the qualitative picture that emerges when studying a random SP graph with a fixed number of vertices. The maximum degree and the degree sequence of a random SP graph have been studied in \cite{Drmota2011} and \cite{Bernasconi2009, Drmota20102}, respectively. Drmota and Noy~\cite{Drmota2013} investigated several extremal parameters in subcritical graph classes, which include the class of SP graphs. They showed, for instance, that the expected diameter $D_n$ of a random connected SP graph on $n$ vertices satisfies $c_1 \sqrt{n} \leq \Ex[D_n] \leq c_2 \sqrt{n \log n}$ for some positive integers $c_1$ and $c_2$. The precise asymptotic estimate has been proved very recently by Panagiotou, Stufler, and Weller~\cite{panagiotou2015} to be of order $\Theta(\sqrt{n})$. In the same work, the authors exploited this fact to prove that in subcritical graph classes, and in particular in SP graphs, the normalized metric space $(V(G), d_{G}/\sqrt{n})$ (where $d_G(u, v)$ is the number of edges in a shortest path that contains $u$ and $v$ in $G$) converges with respect to the Gromov-Hausdorff metric to the \emph{Brownian Continuum Random Tree} multiplied by a constant scaling factor that depends on the class under study (see~\cite{panagiotou2015}).

\subsection*{Our results.}
In the present paper we study the number of spanning trees in a random (connected or 2-connected) SP graph on $n$ vertices. In particular, our main result is a precise estimate of the expected number of spanning trees.

\begin{theorem}\label{thm:main}
Let $X_n$ and $Z_n$ denote the number of spanning trees in a connected, respectively 2-connected, labelled SP graph on $n$ vertices chosen uniformly at random. Then,
\begin{eqnarray*}
\mathbb{E}[X_n]=&s \varrho^{-n} (1+o(1)),& \text{where } s \approx 0.09063,\,\,\, \varrho^{-1} \approx 2.08415,\\
\mathbb{E}[Z_n]=&p \varpi^{-n}(1+o(1)),& \text{where } p \approx 0.25975 ,\,\,\, \varpi^{-1}\approx 2.25829.\nonumber
\end{eqnarray*}
\end{theorem}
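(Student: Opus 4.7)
My plan is to follow the analytic-combinatorics route already used for counting series-parallel graphs in~\cite{Bodirsky2007, Drmota2013}, augmenting the standard network decomposition with an auxiliary statistic that tracks spanning trees, and then extracting $\mathbb E[X_n]$ and $\mathbb E[Z_n]$ as ratios of coefficient asymptotics via singularity analysis.

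The workhorse is a pair of weighted generating functions for SP networks. Given an SP network $N$ with poles $s, t$, let $\tau_c(N)$ be the number of spanning trees of $N$ and $\tau_d(N)$ the number of spanning $2$-forests of $N$ in which $s$ and $t$ lie in different components. A short case analysis on the glueing operations yields
\[
  \tau_c(N_1 \ast N_2) = \tau_c(N_1)\tau_c(N_2), \qquad
  \tau_d(N_1 \ast N_2) = \tau_c(N_1)\tau_d(N_2) + \tau_d(N_1)\tau_c(N_2),
\]
\[
  \tau_c(N_1 \,\|\, N_2) = \tau_c(N_1)\tau_d(N_2) + \tau_d(N_1)\tau_c(N_2), \qquad
  \tau_d(N_1 \,\|\, N_2) = \tau_d(N_1)\tau_d(N_2).
\]
Packaging these into ordinary generating functions $D_c(x) = \sum_N \tau_c(N)\,x^{|V(N)|-2}$ and $D_d(x) = \sum_N \tau_d(N)\,x^{|V(N)|-2}$ turns the combinatorial network decomposition of~\cite{Bodirsky2007} into a closed algebraic system for the pair $(D_c, D_d)$ whose shape mirrors the unweighted one (single-edge base case, series as a sequence of non-series components, parallel as a set of non-parallel components with at most one edge-factor to preserve simplicity).

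From $(D_c, D_d)$ I would pass to the EGF $\widetilde B(x)$ of (2-connected SP graph, spanning tree) pairs by the standard edge-rooting transform, and from $\widetilde B$ to the EGF $\widetilde C(x)$ of (connected SP graph, spanning tree) pairs via the block decomposition: since the spanning trees of a connected graph are in canonical bijection with tuples of spanning trees of its blocks, the weighted block-decomposition identity is formally identical to the one used for plain counting in~\cite{Bodirsky2007}. Both expected values are then given by the ratios $\mathbb E[X_n] = [x^n]\widetilde C/[x^n]C$ and $\mathbb E[Z_n] = [x^n]\widetilde B/[x^n]B$, which I would evaluate asymptotically by the singularity-analysis transfer theorem.

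The main technical obstacle is showing that the spanning-tree weighting preserves the square-root-type singular behaviour of the relevant generating functions: for $\widetilde B$ this amounts to checking that the algebraic system for $(D_c, D_d)$ is of the same analytic type as the plain system for $D$, and for $\widetilde C$ one additionally needs that the block-to-connected composition stays subcritical in the sense of~\cite{Drmota2013}, i.e.\ that the inner network/block function still reaches its branch-point before the outer exponential substitution becomes singular, despite $\widetilde B$ being exponentially heavier than $B$. Once this is established, the $n^{-5/2}$ polynomial factors cancel between numerator and denominator, leaving the clean exponential estimate claimed in the theorem; the explicit constants $\varrho^{-1}\approx 2.08415$, $\varpi^{-1}\approx 2.25829$, $s\approx 0.09063$ and $p\approx 0.25975$ are then obtained by solving the resulting algebraic system numerically.
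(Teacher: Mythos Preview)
Your approach is essentially the paper's: your $(\tau_c,\tau_d)$ are exactly the paper's pair $(D,\bD)$ of networks carrying a spanning tree, respectively a two-component spanning forest separating the poles, and your composition rules for series and parallel glueing are precisely Equations~\eqref{eq:S1}--\eqref{eq:P2}. The overall strategy (networks $\to$ 2-connected $\to$ connected, then take ratios and apply transfer) is identical.

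There is one step you should not gloss over. You write that you would ``pass to $\widetilde B(x)$ by the standard edge-rooting transform'', but edge-rooting only gives you $B_y$ in terms of the network series, cf.~Equation~\eqref{eq:BD2}; recovering $B$ itself then requires integrating in the edge variable $y$, and since your $D_c,D_d$ are univariate in $x$ you have no variable left to integrate over. The paper flags exactly this: the direct integration is ``technically involved due to the relations between the generating functions associated with the different types of networks'', and instead it bypasses the integral entirely via the Dissymmetry Theorem for tree-decomposable classes (Theorem~\ref{thm:dissymmetry}) applied to Tutte's RMT-tree, yielding the closed expression~\eqref{eq:dissymmetryB} for $B$ as a polynomial in the network series $S,\bS,P,\bP,\bD$. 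That is the missing ingredient in your outline; once you use it, the rest of your plan (square-root singularity of the system, subcriticality of the block composition, cancellation of the $n^{-5/2}$ factors, numerical extraction of the constants) goes through exactly as you describe and matches the paper's argument.
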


The previous analysis is done over \emph{all} (connected, 2-connected) SP on a given number of vertices. However, we also address the study of extremal situations. First, we can also particularize the computation of the expectation in the case of a random 2-tree on $n$ vertices, which maximizes the number of edges in an $n$-vertex SP graph. In this case, the expected value of the number of spanning trees is slightly bigger than the one in Theorem~\ref{thm:main}.

\begin{theorem}
\label{thm:2trees}
Let $U_n$ denote the number of spanning trees in a labelled 2-tree on $n$ vertices chosen uniformly at random. Then, the expected value of $U_n$ is asymptotically equal to $s_2 \varrho_2^{-n}$, where $s_2 \approx 0.14307 $ and $\varrho_2^{-1} \approx 2.55561$.
\end{theorem}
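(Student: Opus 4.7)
The plan is to mirror, in a considerably simpler form, the analytic strategy of Theorem~\ref{thm:main}: set up a system of generating functions for edge-rooted 2-trees carrying a spanning-subgraph state, solve it parametrically, locate the dominant singularity, and extract the expectation via singularity analysis, with Moon's exact formula for the number of 2-trees providing the denominator.

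First, I introduce two exponential generating functions whose coefficients enumerate, respectively, (edge-rooted 2-tree, spanning tree) pairs and (edge-rooted 2-tree, spanning forest with exactly two components separating the root endpoints $s,t$) pairs, with $x$ marking only the internal vertices. Every edge-rooted 2-tree decomposes uniquely as the root edge $\{s,t\}$ in parallel with an unordered family of \emph{fan triangles}, each fan triangle being a new apex vertex $v$ joined to both $s$ and $t$ and carrying two smaller edge-rooted 2-trees along the new edges $\{s,v\}$ and $\{v,t\}$, in series. Combining the standard parallel and series rules for the spanning-tree evaluation of the Tutte polynomial with the labelled-set construction yields the coupled system
\begin{align*}
\tilde{P}(x) &= \bigl(1 + x\tilde{P}(x)^{2}\bigr)\exp\!\bigl(2x\,\tilde{P}(x)\tilde{Q}(x)\bigr), \\
\tilde{Q}(x) &= \exp\!\bigl(2x\,\tilde{P}(x)\tilde{Q}(x)\bigr).
\end{align*}

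Next, I solve this parametrically. Setting $R = 2x\tilde{P}\tilde{Q}$ immediately gives $\tilde{Q} = e^{R}$ and $\tilde{P} = Re^{-R}/(2x)$, and an elementary elimination reduces the whole system to the single implicit relation
\[ x = \phi(R) := \frac{R(2-R)\,e^{-2R}}{4}. \]
The dominant singularity $\rho_{P}$ of $\tilde{P}$ is the value of $\phi$ at its smallest positive critical point. The equation $\phi'(R)=0$ simplifies to $R^{2}-3R+1=0$, whose smaller positive root is $R_{0}=(3-\sqrt{5})/2$, giving $\rho_{P} = (\sqrt{5}-1)\,e^{-(3-\sqrt{5})}/8$. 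The analogous well-known parametrisation of the 2-tree EGF, $\tilde{T}(x)=e^{u}$ with $x = u\,e^{-2u}$, has dominant singularity $\rho_{T}=1/(2e)$ from the critical point $u_{0}=1/2$. Since $\phi''(R_{0})\neq 0$, standard Flajolet--Sedgewick transfer theorems applied to the resulting square-root branch singularities yield $[x^{n}]\tilde{P}(x) \sim c_{P}\rho_{P}^{-n}n^{-3/2}$ and $[x^{n}]\tilde{T}(x) \sim c_{T}\rho_{T}^{-n}n^{-3/2}$ for explicit constants $c_{P}, c_{T}$ that can be read off from the local expansions at $R_{0}$ and $u_{0}$.

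Finally, since the edge-rootings of 2-trees are uniformly distributed across the family, the expectation reduces to a ratio of coefficients,
\[ \Ex[U_{N}] \;=\; \frac{[x^{N-2}]\tilde{P}(x)}{[x^{N-2}]\tilde{T}(x)} \;\sim\; \frac{c_{P}}{c_{T}}\left(\frac{\rho_{T}}{\rho_{P}}\right)^{N-2} \;=\; s_{2}\,\varrho_{2}^{-N}, \]
with $\varrho_{2} = \rho_{P}/\rho_{T}$ and $s_{2} = (c_{P}/c_{T})\,\varrho_{2}^{\,2}$; direct numerical evaluation gives $\varrho_{2}^{-1}\approx 2.55561$ and $s_{2}\approx 0.14307$. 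The main technical obstacle is the transfer step: one must verify that the positive-coefficient system $(\tilde{P},\tilde{Q})$ admits analytic continuation up to $\rho_{P}$ and has no other singularity on the circle $|x|=\rho_{P}$, so that the square-root singular expansion truly governs the asymptotics. This is the same kind of verification performed in the proof of Theorem~\ref{thm:main}, and here it reduces to elementary inspection of the scalar map $\phi$ near $R_{0}$ and of the Jacobian of the algebraic-exponential system at this critical point.
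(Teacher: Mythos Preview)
Your proof is correct and in fact cleaner than the paper's. The paper introduces the pair $D^{\overline r}$ (root edge in the spanning tree) and $D^{r}$ (root edge not in the spanning tree), solves the system~\eqref{eq:maximalSPnetworks} numerically with \texttt{Maple}, and then invokes the Dissymmetry Theorem on the $\Delta e$-tree (Lemma~\ref{lem:2trees}) to pass from edge-rooted to unrooted 2-trees before forming the ratio with Lemma~\ref{lem:labelled2trees}. You sidestep the unrooting entirely by observing that every labelled $2$-tree on $N$ vertices has exactly $2N-3$ edges, so edge-rooting multiplies both numerator and denominator by the same factor and the ratio $[x^{N-2}]\tilde P/[x^{N-2}]\tilde T$ already equals $\mathbb E[U_N]$. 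Your choice of the pair $(\tilde P,\tilde Q)=(D^{\overline r}+D^{r},\,D^{\overline r})$ also makes the system integrable in closed form: the substitution $R=2x\tilde P\tilde Q$ collapses everything to the single relation $x=\phi(R)=R(2-R)e^{-2R}/4$, whose critical equation $R^{2}-3R+1=0$ yields the explicit singularity $\rho_P=(\sqrt5-1)e^{-(3-\sqrt5)}/8$ and hence the exact growth constant $\varrho_2^{-1}=4\big/\big((\sqrt5-1)e^{\sqrt5-2}\big)$, matching the paper's numerical value $R_T\approx 0.07197$ and $\varrho_2^{-1}\approx 2.55561$. What the paper's longer route buys is the full singular expansion of the \emph{unrooted} EGF $T^{s}(x,1)$, which is not needed for the first moment but would be if one wanted finer information; for the theorem as stated, your argument is strictly more economical.
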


Finally, we study SP graphs with few edges.
More precisely, we elaborate the expected number of spanning trees in a random connected SP graph on $n$ vertices and \emph{fixed excess} equal to $k$, where $k$ is a integer that does not depend on $n$.
Recall that the excess of a graph $G$ is defined as the number of its edges minus the number of its vertices (by fixed we mean that it does not grow as a function of $n$).
Our result is a polynomial estimate (in $n$) of the expected number of spanning trees:

\begin{theorem}
\label{thm:excess}
Let $k>1$ be a fixed integer. Let $X_{n,k}$ denote the number of spanning trees in a connected labelled SP graph, on $n$ vertices and with fixed excess equal to $k$, chosen uniformly at random.

Then, when $n$ is large enough,
$$\mathbb{E}[X_{n,k}]= \tilde{c}(k) \frac{\Gamma(3k/2)}{\Gamma(2k+1/2)} \left(\frac{n}{2}\right)^{\frac{k+1}{2}}  (1+o(1)),$$
where the function $\tilde{c}(k)$ satisfies that, for $k$ large
\begin{equation}\label{eq:cons_k}
\tilde{c}(k)= \tilde{c}   \tilde{\gamma}^{-k} (1+o(1)),
\end{equation}
with $\tilde{c} \approx 0.90959$ and $\tilde{\gamma}^{-1} \approx 2.60560$.
\end{theorem}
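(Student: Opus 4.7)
The plan is to extend the bivariate generating function machinery developed in the preceding sections by introducing an additional variable $y$ marking the excess. Let $\mathcal{C}(x,y)$ and $\mathcal{S}(x,y)$ be the EGFs of connected SP graphs and of pairs (connected SP graph $G$, spanning tree of $G$), respectively, where $x$ marks vertices and $y$ marks the excess $|E(G)|-|V(G)|$. The expected value is then the coefficient ratio
\begin{equation*}
\mathbb{E}[X_{n,k}]=\frac{[x^ny^k]\mathcal{S}(x,y)}{[x^ny^k]\mathcal{C}(x,y)}.
\end{equation*}
Since the number of spanning trees factorises over blocks, $\tau(G)=\prod_i\tau(B_i)$, the series $\mathcal{S}$ inherits the same block decomposition as $\mathcal{C}$, with each 2-connected building block replaced by its spanning-tree-weighted analogue. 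The essential combinatorial input is the block identity $\mathrm{excess}(G)=s_2-1+\sum_{i=1}^{s_2}k_i$, where $s_2$ counts the 2-connected blocks of $G$ with at least three vertices and $k_1,\dots,k_{s_2}\geq 0$ are their excesses. For fixed $k$, this forces $s_2\in\{1,\dots,k+1\}$ and $\sum k_i=k+1-s_2$, so that only finitely many profiles $(s_2,k_1,\dots,k_{s_2})$ contribute to $[y^k]\mathcal{C}$ and $[y^k]\mathcal{S}$, each contribution consisting of a product of a bounded number of non-trivial 2-connected pieces grafted onto a tree-like scaffolding of single edges.

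The next step is a local singularity analysis near the tree singularity $x=1/e$. Using the Puiseux expansion $T(x)=1-\sqrt{2(1-ex)}+O(1-ex)$ of the rooted tree function $T(x)=xe^{T(x)}$, each admissible profile contributes a singular term whose fractional order in $(1-ex)$ is determined by the attachment structure. Applying the Flajolet--Odlyzko transfer theorem then yields asymptotics of the form $[x^ny^k]\mathcal{C}(x,y)\sim a(k)n^{\alpha(k)}e^n$ and $[x^ny^k]\mathcal{S}(x,y)\sim b(k)n^{\beta(k)}e^n$; the exponent difference $\beta(k)-\alpha(k)=(k+1)/2$ produces the $(n/2)^{(k+1)/2}$ factor, while the ratio of Gamma-function prefactors coming from the two transfers yields $\Gamma(3k/2)/\Gamma(2k+1/2)$, and the leading summation over profiles produces the constant $\tilde c(k)$.

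For the asymptotic \eqref{eq:cons_k} as $k\to\infty$, one performs a further singularity analysis, this time in the variable $y$. Summing the admissible-profile contributions produces an expression of the form $\tilde c(k)=[y^{k+1}]\tilde F(y)$ for an explicit algebraic generating function $\tilde F$ built from the spanning-tree-weighted 2-connected SP series evaluated at the tree singularity $x=1/e$; its dominant singularity $\tilde\gamma$ can be located using the algebraic system for 2-connected SP graphs derived in the earlier sections, and a final transfer application at $\tilde\gamma$ yields \eqref{eq:cons_k}. The main obstacle is the bookkeeping in the first step: enumerating admissible profiles and verifying that their fractional singular contributions combine to produce exactly the stated $\Gamma$-factor prefactor, and then identifying $\tilde F$ cleanly enough that its radius of convergence is accessible.
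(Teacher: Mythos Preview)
Your route diverges from the paper's and, as written, has a real gap. The paper does not use the block decomposition here; it uses Wright's \emph{kernel} construction: from a connected SP graph of excess $k$ one strips pendant trees and then dissolves all degree-$2$ vertices to obtain a cubic SP multigraph on $2k$ vertices and $3k$ edges, decorated by rooted labelled trees (EGF $W(x)=xe^{W(x)}$) at its vertices and by tree-sequences along its edges. Only finitely many kernels exist for each $k$, so all of the $n$-growth comes from the tree decorations; matching upper and lower bounds then give $[x^n]C_k(x)\sim g_k\,2^{-3k/2}n^{3k/2-1}e^n/\Gamma(3k/2)$ and $[x^n]\overline C_k(x)\sim \overline g_k\,2^{-2k-1/2}n^{2k-1/2}e^n/\Gamma(2k+1/2)$, where $g_k,\overline g_k$ count weighted cubic SP multigraphs without and with a distinguished spanning tree. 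The exponents drop out of the edge count: each of the $3k$ kernel edges contributes a factor $(1-W)^{-1}$, and each of the $k+1$ non-tree edges contributes an \emph{extra} $(1-W)^{-1}$ (from choosing which subdivided edge leaves the spanning tree), giving poles of order $3k$ and $4k+1$ in $(1-W)$ and hence the $\Gamma$ ratio and the power $(n/2)^{(k+1)/2}$. The estimate~\eqref{eq:cons_k} is then a separate singularity analysis, in the excess variable, of explicit algebraic generating functions for cubic SP multigraphs (with and without a spanning tree), so that $\tilde c(k)=\overline g_k/g_k$.

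Your block-tree picture is a legitimate first move but does not finish the job. ``Finitely many profiles'' is correct, yet within each profile the $s_2$ non-trivial $2$-connected blocks still have \emph{unbounded} size; their vertex-EGFs are singular at $x=1$ rather than $x=1/e$, and since the block formula effectively substitutes $W(x)$ (with $W(1/e)=1$) into them, you face a critical composition whose singular exponent you have not computed. Extracting that exponent in practice forces a further reduction of each $2$-connected block to its own cubic kernel, at which point you have reproduced the paper's argument by a longer road. In your sketch the crucial equality $\beta(k)-\alpha(k)=(k+1)/2$ is asserted rather than derived, and the description of $\tilde F$ for~\eqref{eq:cons_k} is too vague to locate its radius of convergence; in the paper the large-$k$ behaviour of $\tilde c(k)$ comes from two concrete polynomial systems for cubic SP multigraphs, not from anything evaluated at $x=1/e$.
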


The previous formulas must be understood in the following way: we fix $k$ and we let $n$ tend to infinity.
Additionally, if $k$ is sufficiently large, we can get the approximation of $\tilde{c}(k)$ stated in the second part of Theorem~\ref{thm:excess}.
Indeed, the term $o(1)$ in Equation \eqref{eq:cons_k} is polynomially small in $k$ ($O(k^{-1})$).

In order to deduce these expressions in Theorem~\ref{thm:excess} we analyse weighted cubic SP multigraphs on $2k$ vertices. These objects are reminiscent of the work \cite{Kang2012} and are building on previous enumerative results on simple cubic planar graphs~\cite{BodirskyKang2007}, see Section \ref{sec:fixedexcess} for definitions and details. The asymptotic estimate stated in Theorem \ref{thm:excess} arises when getting asymptotic estimates (in terms of $k$) for the number of such multigraphs.

\subsection*{Organisation.} The rest of the paper is organised as follows. In Section~\ref{sec:prel} we introduce the essential combinatorial and in Section~\ref{sec:analytic} the essential analytic definitions, techniques, and results that we use. The proof of Theorem~\ref{thm:main} is then presented in Subsection~\ref{sec:1stmoment} of Section~\ref{sec:trees}.
In the same section we also analyse the behaviour of the growth constant of the expected number of spanning trees in a random connected SP graph if we fix its edge density (Subsection~\ref{sub:fix-y}) and comment on the variance of the number of spanning trees in a random 2-connected SP graph (Subsection~\ref{sec:2ndmoment}). Next, Section~\ref{sec:2trees} is devoted to the analysis of 2-trees and the proof of Theorem~\ref{thm:2trees}. Then, Section~\ref{sec:fixedexcess} deals with SP graphs with fixed excess and presents the proof of Theorem~\ref{thm:excess}. Finally, Section~\ref{sec:concluding} contains some concluding remarks and open problems.




\section{Combinatorial Preliminaries}\label{sec:prel}

\subsection*{Notation.} Our combinatorial and analytic notation is standard and follows~\cite{flajolet2009analytic}. In particular, given a formal power series of exponential type $A(x) = \sum_{n\geq0} a_n \frac{x^n}{n!}$ (EGF for short), we use the notation $[x^n] A(x)$ to indicate the $n$-th coefficient of $A(x)$. Given a bivariate function $A(x,y)$, we denote the partial derivative of $A(x,y)$ with respect to $x$ and $y$ by $A_x(x,y)$ and $A_y(x,y)$, respectively. However, we will usually use the notation $A'(x,y)$ to denote $A_x(x,y)$. We write $a_n \sim b_n$ whenever $\lim_{n \to \infty}a_n/b_n =1$. Throughout the paper $\log$ denotes the natural logarithm.
In our setting, we use the variable $x$ to mark vertices and the variable $y$ to mark edges. These variables are exponential and ordinary, respectively.

\subsection*{Graph decompositions}\label{subs:decomps}

The main ingredients in our proofs, from an enumerative combinatorial point of view, are the Symbolic Method, generating functions, connectivity decompositions, and an extension of the Dissymmetry Theorem to tree-decomposable classes.
In this section we review the essential definitions and results related to these topics.
For further details, in particular for an introduction to the Symbolic Method, we refer to the book \emph{Analytic Combinatorics} by Flajolet and Sedgewick~\cite{flajolet2009analytic}.

Let $\cC$ be a class of connected graphs with the property that a graph is in $\cC$ if and only if all its 2-connected and 3-connected components are in $\cC$.
Observe that for instance the class of connected SP graphs carrying a distinguished spanning tree shares this property.
Let $c_{n,m}$ denote the number of graphs in $\cC$ with $n$ vertices and $m$ edges.
The associated (mixed) exponential generating function (or EGF for short) is the formal power series
$$C(x,y) = \sum\limits_{m,n\geq 0} c_{n,m} \frac{x^n}{n!}y^m,$$
where $x$ and $y$ mark vertices and edges, respectively.

Similarly, let $b_{n,m}$ denote the number of 2-connected graphs in $\cC$ with $n$ vertices and $m$ edges and let $B(x,y)$ be its associated EGF.
A connected graph rooted at a vertex can be obtained from a set of rooted 2-connected graphs, where the root is not labelled and where every other vertex is substituted by a connected graph rooted at a vertex. Using the Symbolic Method, this translates into the following relation between $C(x,y)$ and $B(x,y)$ (see also~\cite{Gimenez2009}).
\begin{equation}
\label{eq:CB}
xC'(x,y)= x \exp\left(B'(xC'(x,y),y)\right).
\end{equation}

Following~\cite{Walsh1982}, a \emph{network} is defined as a simple graph with two distinguished vertices, that are called 0-pole and $\infty$-pole and do not bear a label, such that adding an edge between the two poles creates a 2-connected multigraph. If there is an edge joining the two poles, it is called \emph{root edge}.
The root edge defines the two poles, which are usually denoted by $0$ and $\infty$  (initial and final vertex of the root edge, respectively).
Let $D(x,y)$ denote the EGF associated with networks. The following equation, shown by Walsh in~\cite{Walsh1982}, reflects the relation between $B(x,y)$ and $D(x,y)$.
\begin{equation}
\label{eq:BD}
2(1+y)B_y(x,y) = x^2(D(x,y)+1).
\end{equation}
The left-hand side in Equation~\eqref{eq:BD} corresponds to the family 2-connected graph rooted at a directed edge that might not be present in the graph and the right-hand side corresponds to the family of networks (possibly empty) where in addition a label is given to the two poles.

A \emph{trivial} network consists of the two poles and of the root edge.
Following the ideas of~\cite{trakhtenbrot1958}, we further distinguish between three types of networks, namely \emph{series}, \emph{parallel}, and \emph{h-networks} as follows.
A series network $S$ can be obtained from a directed cycle with a distinguished edge (which defines the two poles of the network) by replacing every other edge by a network, and finally removing the distinguished edge.
A parallel network $P$ arises from merging at least two non-trivial networks, the root edge of each of them being not present, at their common poles.
In this family, the root edge joining the two poles of $P$ might not be present in $P$.
Finally, an $h$-network is obtained from a 3-connected graph $H$ rooted at an oriented edge, by replacing every edge of $H$ apart from the root edge by a network.
As in the parallel case, here the root edge might not be in the network.

Trakhtenbrot~\cite{trakhtenbrot1958} showed that a network is either trivial, series, parallel, or an $h$-network, and Walsh \cite{Walsh1982} translated this decomposition into counting formulas.
In SP graphs, the set of $h$-networks is empty, \textrm{so in the rest of the paper we deal only deal with series and parallel networks.}

Let us finally mention that our definition of network slightly differs from Trakhtenbrot's.
Indeed, in~\cite{trakhtenbrot1958} series networks could contain the root edge.
In our work, series networks containing the root edge (in Trakhtenbrot's sense) are always considered to be parallel.
This convention would arise to be helpful when dealing with spanning trees.

In Section~\ref{sec:trees} we aim for a precise asymptotic estimate for the number of spanning trees in random SP graphs.
For this purpose, we will enumerate the class of connected SP graphs with a distinguished spanning tree.
The main idea is to give a complete analytic analysis of the generating function associated with this class using the relations to the class of 2-connected SP graphs and to the class of networks, both carrying a distinguished spanning tree. Using Equation~\eqref{eq:BD} would imply integration steps that are known to get difficult when considering enriched classes of graphs.
Chapuy, Fusy, Kang, and Shoilekova~\cite{Chapuy2008} found, however, a convenient combinatorial trick to forgo this integration step by using the dissymmetry theorem for tree-decomposable classes (Theorem~\ref{thm:dissymmetry}) and by using the grammar for decomposing graphs into 3-connected components that they developed in~\cite{Chapuy2008}. Theorem~\ref{thm:dissymmetry} will also serve us well in Section~\ref{sec:2trees}.

A class $\cA$ of graphs is \emph{tree-decomposable} if for each graph $G \in \cA$ we can define a tree $\tau(G)$ associated with $G$.
Let $\cA_{\circ}$ denote the class of graphs $G$ in $\cA$ where $\tau(G)$ has a distinguished vertex.
Similarly, denote by $\cA_{\circ - \circ}$ the class of all graphs $G$ in $\cA$ where $\tau(G)$ carries a distinguished edge.
Finally, let $\cA_{\circ \to \circ}$ be the class of all graphs $G$ in $\cA$ where an edge of $\tau(G)$ is directed.
The Dissymmetry Theorem for trees by Bergeron~\cite{Bergeron1998} allows to express the class of unrooted trees in terms of classes of trees with a distinguished vertex, edge or with a directed edge.
This theorem can be extended to tree-decomposable classes in the following way (see e.g.~\cite{Chapuy2008}).

\begin{theorem}[Dissymmetry Theorem for tree-decomposable classes]
\label{thm:dissymmetry}
Let $\cA$ be a tree-decom\-po\-sable class of graphs. Then,
$$\cA + \cA_{\circ \to \circ} \simeq \cA_{\circ} + \cA_{\circ - \circ}.$$
\end{theorem}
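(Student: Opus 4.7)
The plan is to give a bijective proof that reduces the identity to the classical Bergeron dissymmetry theorem for trees, applied to the associated tree $\tau(G)$. Because $\cA$ is tree-decomposable, the four species in the statement all sit over $\cA$ by adding an annotation to $\tau(G)$ alone (a distinguished vertex, an undirected edge, or an oriented edge). Thus it suffices to produce, for every finite tree $\tau$, a canonical automorphism-equivariant bijection between $\{\tau\} \sqcup \vec{E}(\tau)$ and $V(\tau) \sqcup E(\tau)$, where $\vec{E}(\tau)$ denotes the set of oriented edges of $\tau$; the bijection then lifts pointwise over $\cA$ and yields the claimed isomorphism of labelled classes.

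First I would recall the canonical \emph{center} $c(\tau)$ of a tree, defined by iteratively peeling off leaves: the procedure terminates with either a single vertex or a single edge, and the outcome is invariant under every automorphism of $\tau$. Moreover, for each non-center vertex $w$ there is a unique neighbour $p(w)$ lying on the shortest path from $w$ to $c(\tau)$, so every edge has a well-defined orientation ``toward the center'' (except the central edge itself, if the center is an edge).

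Next I would define the bijection $\Phi : \{\tau\} \sqcup \vec{E}(\tau) \to V(\tau) \sqcup E(\tau)$ as follows. The unmarked tree $\tau$ is sent to $c(\tau)$, which lies in $V(\tau)$ or $E(\tau)$ according to the type of center. An oriented edge $\vec{e} = u \to v$ whose underlying edge $\{u,v\}$ is not the central edge is sent to $v$ if $p(v) = u$ (so $\vec{e}$ points away from the center) and to $\{u,v\}$ if $p(u) = v$ (so $\vec{e}$ points toward the center). In the exceptional case where $c(\tau)$ equals an edge $\{a,b\}$, the two orientations $a \to b$ and $b \to a$ are assigned respectively to $b$ and $a$, filling the two remaining vertex slots. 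Bijectivity is then verified by checking that each vertex and each edge of $\tau$ is hit exactly once: the center is hit by the unmarked $\tau$; every non-center vertex $w$ is hit by the unique orientation $p(w) \to w$; every non-central edge is hit by its toward-center orientation; and in the edge-center case the two center vertices are hit by the two orientations of the central edge. As a sanity check, both sides have cardinality $2|E(\tau)|+1 = |V(\tau)|+|E(\tau)|$.

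The main obstacle is that the proof splits into a vertex-center subcase and an edge-center subcase, and the convention chosen for the latter must be equivariant under the automorphism of $\tau$ swapping $a$ and $b$ (and, more generally, under every automorphism of $\tau$); otherwise the constructed map would fail to descend to a morphism of labelled species. The vertex-center subcase is the direct generalisation of Bergeron's original bijection and is essentially routine; the edge-center subcase requires the ad hoc rule above, whose symmetry has to be confirmed. Once equivariance is established, the bijection lifts from the level of individual trees to the level of the tree-decomposable class $\cA$, giving $\cA + \cA_{\circ \to \circ} \simeq \cA_\circ + \cA_{\circ - \circ}$ as required.
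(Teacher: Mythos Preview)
The paper does not actually prove Theorem~\ref{thm:dissymmetry}; it is stated with a citation to Bergeron and to Chapuy--Fusy--Kang--Shoilekova, and then used as a black box. Your argument is precisely the standard center-based bijection that underlies those references, so there is nothing to compare against: you have supplied the proof the paper omits, and it is correct.

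One minor remark: you flag the edge-center subcase as an ``ad hoc rule'' whose equivariance ``has to be confirmed'', but in fact your rule for the central edge is simply ``send the oriented edge to its head'', which is manifestly invariant under every automorphism of $\tau$ (including any swap of the two central vertices). No separate verification is needed, and the case split is not a genuine obstacle. With that observation the proof is entirely routine; the lift to the tree-decomposable class $\cA$ is immediate because the center of $\tau(G)$ is determined by $G$ alone, so the bijection commutes with relabellings of $G$.
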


Finally, let us briefly summarize Tutte's decomposition~\cite{tutte1966} for decomposing 2-connected graphs into 3-connected components.
For a thorough exposition we refer to~\cite{Chapuy2008}.

Tutte's decomposition is based on split operations and the structure obtained from this process is shown to be independent of the order of the operations.
Roughly speaking, in every split operation we split the edge set of a graph $G$ into two edge sets $E_1$ and $E_2$ that only coincide in exactly two vertices, say $u$ and $v$, and where $G[E_1]$ is 2-connected and $G[E_2]$ is connected modulo $\{u,v\}$ (meaning that there exists no partition of $E_2$ into two nonempty sets $E_2'$ and $E_2''$ such that $G[E_2']$ and $G[E_2'']$ only intersect in $u$ and $v$).
Next we add a so-called virtual edge $e$ between these two vertices.
Then we split the graph along this virtual edge which yields two graphs $G_1$ and $G_2$ that correspond respectively to $G[E_1]$ and $G[E_2]$ with $e$ now being a real edge.
We say that $G_1$ and $G_2$ are \emph{matched} by the virtual edge $e$.

The resulted structure is a collection of graphs that we call bricks.
Tutte showed that there are only three types of bricks, namely \emph{ring graphs} (R-bricks), \emph{multi-edge graphs} (M-bricks), and 3-connected graphs with at least 4 vertices (T-bricks).
The class of ring graphs is defined as the class of cyclic chains of at least 3 edges
and the class of multi-edge graphs as the class of graphs with exactly two labelled vertices that are connected by at least 3 edges. 

The \emph{RMT-tree} of a graph $G$ is defined as the graph $\tau(G)$ the vertices of which are the bricks that result from Tutte's decomposition applied to $G$. Two vertices in $\tau(G)$ are connected, when the corresponding bricks are matched by a virtual edge.
It was shown by Tutte~\cite{tutte1966} that $\tau(G)$ is indeed a tree and there are no two adjacent $R$-bricks nor two adjacent $M$-bricks.

Let $\cB$ be the class of all 2-connected graphs with at least 3 vertices.
We denote by $\cB_{R}$, $\cB_{M}$, and $\cB_{T}$ the classes of graphs $G$ in $\cB$ such that the RMT-tree associated with $G$ carries a distinguished R-vertex, M-vertex, and T-vertex, respectively.
Moreover, let $\cB_{R-M}$ denote the class of graphs $G$ in $\cB$ such that an edge between an R-vertex and an M-vertex in the RMT-tree associated with $G$ is distinguished.
The classes $\cB_{R-T}$, $\cB_{M-T}$, and $\cB_{T-T}$ are defined analogously.
Finally, let $\cB_{T\to T}$ be the class of graphs $G$ in $\cB$ such that an edge between two $T$-vertices is directed.

Using Theorem~\ref{thm:dissymmetry}, $\cB$ satisfies the following equation as shown in~\cite{Chapuy2008}:
\begin{equation}
\label{eq:dissymmetry2}
\cB \simeq \cB_R + \cB_M + \cB_T -\cB_{R-M} - \cB_{R-T} - \cB_{M-T} - \cB_{T\to T} + \cB_{T-T}.
\end{equation}

In our work we consider only SP graphs. In particular, a SP graph does not have 3-connected components. This implies that SP graphs do not contain T-bricks, and in hence RMT-trees do not have T-vertices. So Equation \eqref{eq:dissymmetry2} is simplified to
\begin{equation}
\label{eq:dissymmetry}
\cB \simeq \cB_R + \cB_M -\cB_{R-M}.
\end{equation}




\section{Analytic Background}
\label{sec:analytic}

The proofs in this paper are based on singularity analysis of generating functions.
In this section we introduce the necessary analytic background. For the sake of completeness, we state the results that we use, in particular a simplified version of the Transfer Theorems (Theorem~\ref{thm:transfer}) and a simplified version for the singularity analysis of systems of functional equations (Theorem~\ref{thm:drmota}).
For more details, we refer to the books \emph{Analytic Combinatorics} by Flajolet and Sedgewick~\cite{flajolet2009analytic} and \emph{Random Trees} by Drmota~\cite{DrmotaBook}.

Given a univariate exponential generating function
$$A(x) = \sum\limits_{n \geq 0} a_{n} \frac{x^n}{n!}$$
we would like to determine an asymptotic estimate of the sequence $(a_n)_{n\geq 0}$.
Pringsheim's Theorem (see e.g.~\cite{flajolet2009analytic}) assures that generating functions with radius of convergence $\varrho$ and non-negative Taylor coefficients have a singularity at $\varrho$, in particular a positive real dominant singularity.
As shown in~\cite{flajolet2009analytic} (Theorem IV.7), the exponential growth of the sequence $(a_n)_{n\geq 0}$ is therefore dictated by the smallest positive singularity $\varrho$ of $A(x)$ in the sense that $$[x^n]A(x) \sim \Theta(n) \varrho^{-n},$$
where $\Theta(n)$ grows subexponentially, i.e.~$\limsup_{n\to \infty} |\Theta(n)|^{1/n} = 1$.
The subexponential term $\Theta(n)$ results from the nature of this singularity.
The so-called \emph{Transfer Theorems}, developed by Flajolet and Odlyzko~\cite{flajolet1990singularity}, provide us a convenient way to determine the subexponential term of $[x^n]A(x)$.
In particular, Theorem~\ref{thm:transfer} is a special case of the Transfer Theorems in~\cite{flajolet2009analytic}.
For this, we need the definition of dented domains. Given $R, \zeta >0$ with $R > \zeta$ and $0< \phi < \pi/2$, the \emph{domain dented} at $\zeta$ (which we write as $\Delta_{\zeta}(\phi,R)$) is defined as
$$\Delta_{\zeta}(\phi,R)= \{z\in \mathbb C: |z|<R,\, z \neq \zeta,\, |\Arg(z-\zeta)|>\phi\}.$$

\begin{theorem}
\label{thm:transfer}
Let $\alpha \in \mathbb R \setminus \mathbb Z^{-}$ and let $A(x)$ be analytic in a domain $\Delta_{\varrho}(\phi,R)$ dented at the smallest positive  singularity $\varrho$ of $A(x)$.
If, as $x \to \varrho$ in $\Delta_{\varrho}(\phi,R)$,
$$A(x) \sim c\left(1-\frac{x}{\varrho}\right)^{-\alpha},$$
then
$$[x^n]A(x) = \frac{c}{\Gamma(\alpha)} n^{\alpha-1} \varrho^{-n} (1+o(1)),$$
where $\Gamma(x)$ is the classical Euler Gamma function defined as $\Gamma(x) = \int_{0}^{\infty} t^{x-1}e^{-t} dt$.
\end{theorem}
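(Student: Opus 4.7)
The plan is to derive Theorem~\ref{thm:transfer} via the classical two-step strategy of Flajolet and Odlyzko: first compute the coefficient asymptotics of the explicit ``pivot'' function $(1-x)^{-\alpha}$, and then transfer those asymptotics to any function that is asymptotically equivalent to it at its dominant singularity. A preliminary rescaling $x \mapsto \varrho x$ would reduce us to the case $\varrho = 1$; the factor $\varrho^{-n}$ in the final estimate is produced entirely by this rescaling, so I can work throughout with the unit-singularity case $\Delta_1(\phi, R)$.

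For the pivot case I would expand $(1-x)^{-\alpha}$ as a generalised binomial series:
$$[x^n](1-x)^{-\alpha} = \binom{n+\alpha-1}{n} = \frac{\Gamma(n+\alpha)}{\Gamma(\alpha)\,\Gamma(n+1)}.$$
The standard Stirling estimate $\Gamma(n+\alpha)/\Gamma(n+1) = n^{\alpha-1}(1 + O(1/n))$, applicable precisely because $\alpha \notin \mathbb Z^{-}$ so that $\Gamma(\alpha)$ is finite and nonzero, would then give $[x^n](1-x)^{-\alpha} = n^{\alpha-1}/\Gamma(\alpha) \cdot (1+o(1))$, which already has the shape required by the conclusion.

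For the transfer step I would write $A(x) = c(1-x)^{-\alpha} + R(x)$, where the hypothesis forces $R(x) = o((1-x)^{-\alpha})$ uniformly as $x \to 1$ inside $\Delta_1(\phi, R)$. Applying Cauchy's formula,
$$[x^n]R(x) = \frac{1}{2\pi i} \oint_{\gamma} \frac{R(x)}{x^{n+1}}\,dx,$$
I would take $\gamma$ to be a Hankel-type contour sitting inside the dented domain: a small circle of radius $1/n$ around $x=1$, two straight segments along the rays of angle $\pm\phi$, and a large circular arc of radius slightly less than $R$. The large arc contributes $O(R^{-n})$, which is exponentially smaller than $n^{\alpha-1}$. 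On the remaining part, the substitution $x = 1 + t/n$ turns the integral into a rescaled version of the Hankel representation $1/\Gamma(\alpha) = (2\pi i)^{-1}\int_{H} e^{t}(-t)^{-\alpha}\,dt$, with the prefactor $n^{\alpha-1}$ appearing naturally and the quotient $R(x)/(1-x)^{-\alpha} = o(1)$ multiplying the integrand pointwise.

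The main technical obstacle is precisely this last pointwise-to-integral passage: promoting the uniform little-$o$ estimate on the dented neighbourhood to a uniform little-$o$ on the rescaled Hankel contour, so that it can legitimately be pulled through the integral to produce $[x^n]R(x) = o(n^{\alpha-1})$. This will require a domination argument that splits the contour into a compact inner part, where dominated convergence applies directly because the Hankel integrand $e^t(-t)^{-\alpha}$ is absolutely integrable, and two outer tails along the rays, where the decay $|e^{t}| = e^{\Re t}$ along rays of angle $\pm\phi$ with $\phi < \pi/2$ majorises any polynomial factor $|t|^{-\Re\alpha}$ arising from the pivot. Once that estimate is established, combining it with the pivot asymptotics and reinstating the $\varrho^{-n}$ gives $[x^n]A(x) = c\, n^{\alpha-1} \varrho^{-n}/\Gamma(\alpha)\cdot(1+o(1))$, as claimed.
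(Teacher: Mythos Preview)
Your sketch is essentially the standard Flajolet--Odlyzko argument and is correct in outline; note, however, that the paper does not give its own proof of this statement. Theorem~\ref{thm:transfer} is stated in Section~\ref{sec:analytic} as analytic background, explicitly presented as a special case of the Transfer Theorems and attributed to Flajolet and Odlyzko~\cite{flajolet1990singularity} and~\cite{flajolet2009analytic}. So there is no in-paper proof to compare against: what you have written is precisely the classical argument that the paper is citing.

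Two small technical points if you intend to polish this into a full proof. First, the straight segments of the Hankel-type contour should lie \emph{strictly inside} the dented domain, i.e.\ at some angle $\theta$ with $\phi<\theta<\pi/2$ (measured from the ray $x>\varrho$), not exactly on the boundary rays of angle $\pm\phi$; otherwise analyticity of $A$ along the contour is not guaranteed. Second, your decay statement is slightly garbled: after the substitution $x=1-t/n$ the factor $x^{-n-1}$ becomes $(1-t/n)^{-n-1}\to e^{t}$, and it is the fact that the $t$-rays head towards $\operatorname{Re}t\to -\infty$ (because the $x$-rays have $|\operatorname{Arg}(x-1)|$ close to $0^{+}$, hence $|\operatorname{Arg}(-t)|$ close to $0^{+}$) that makes $|e^{t}|$ decay and dominates the polynomial factor $|t|^{-\alpha}$. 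With those adjustments the domination/tail-splitting argument goes through exactly as you describe.
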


In this paper, the singular expansion of a generating function $A(x)$ in a domain dented at a singularity $\varrho$ is always of the form
$$A(x) = A_0 + A_1 X +  A_2X^2 + \ldots + A_{2k+1}X^{2k+1} + \bigO(X^{2k+2}),$$
where $X= \sqrt{1-x/\varrho}$. The even powers of $X$, being analytic functions, do not contribute to the asymptotic of $[x^n] f(x)$.

If $A_1=A_3=\dots=A_{2k-1}=0$ and $A_{2k+1}\neq 0$, then the number $(2k+1)/2$ is called the \emph{singular exponent}.
Then, by Theorem~\ref{thm:transfer} we get that
$$[x^n]A(x) \sim \frac{c}{\Gamma(\alpha)} n^{\alpha-1} \varrho^{-n}$$
with $c = A_{2k+1}$ and $\alpha = -(2k+1)/2$. When $A_1\neq 0$ we say that $A(x)$ has a square-root expansion.


Let us now turn to the asymptotic analysis of systems of functional equations.
The main reference for this topic is the paper~\cite{Drmota1997}.
We include here a shortened version (see Section 2.2.5.~in \cite{DrmotaBook} for the more general statement).
Assume that $y_1(x),\dots, y_k(x)$ are generating functions satisfying a functional system of equations.
We define $\textbf{y}=(y_1(x),\dots, y_k(x))$, and the system satisfied by $\textbf{y}$ is denoted by $\textbf{y}=\textbf{F}(x; \textbf{y})$, where $\textbf{F}=(F_1,\dots, F_k)$ is a vector of functions.
The \emph{dependency graph} $G=(V,E)$ associated with the system $\textbf{y}=\textbf{F}(x; \textbf{y})$ is an oriented graph the vertex set of which is $V=\{y_1,\dots, y_k\}$ and $\overrightarrow{y_{i}y_{j}}$ is in $E$ if and only if $\frac{\partial F_i}{\partial y_j}\neq 0$. The latter condition indicates that there is a real dependence between $F_i$ and $y_j$.
A dependency graph is said to be \emph{strongly connected} if every pair of vertices can be linked by a directed path.
Using this terminology, we can finally state the following result:

\begin{theorem}[Systems of functional equations~\cite{DrmotaBook}, simplified version]\label{thm:drmota}
Consider the functional system of equations $\textbf{y}=\textbf{F}(x; \textbf{y})$ satisfying that each $y_i$ is analytic at $x=0$. Additionally, we require that each component of $\textbf{F}$ is an entire function with positive Taylor coefficients, that it is not linear in the components $y_i$ and depends on $x$. Finally, we assume that $\textbf F(0;\textbf y)= 0$ and $\textbf F(x;\emph{\textbf{0}}) \neq 0$.
Assume also that the associated dependency graph is strongly connected. Denote by $\textbf{I}_{k}$ the $k \times k$ identity matrix and by $\mathrm{Jac}(\textbf{F})$ the Jacobian matrix associated with $\textbf{F}$ and with respect to variables $y_1,\dots, y_k$. Assume that the system of equations
\begin{equation*}
\textbf{y}=\textbf{F}(x; \textbf{y}),\,\,\quad 0= \det\left(\textbf{I}_{k}-\mathrm{Jac}(\textbf{F})\right)
\end{equation*}
has a unique solution $(x_0,\textbf{y}_0)$ in the region of analyticity of each component of $\textbf{F}$.
Then there is a unique solution $\textbf{y}$ of the initial system of equations such that the components of $\textbf{y}$ have non-negative Taylor coefficients and a square-root expansion in a domain dented at $x=x_0$.
\end{theorem}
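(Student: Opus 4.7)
The plan is to treat the statement as a multivariate extension of the smooth implicit function schema (Flajolet--Sedgewick Theorem~IV.7 / Meir--Moon). The three stages are: (i) existence of a unique analytic solution with non-negative coefficients in a disc around $x=0$; (ii) location of the dominant singularity as a point where $\textbf{I}_k-\mathrm{Jac}(\textbf F)$ becomes singular; and (iii) derivation of the square-root expansion via a Perron--Frobenius reduction to a scalar implicit equation.

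For (i), since $\textbf F(0;\textbf y)=0$ and $\textbf F$ is entire with non-negative Taylor coefficients, the Jacobian vanishes at $(0,\textbf 0)$, so $\textbf I_k-\mathrm{Jac}(\textbf F)$ is trivially invertible there and the analytic implicit function theorem produces a unique solution $\textbf y(x)$ analytic in a neighborhood of $0$. Iterating the fixed-point map $\textbf y\mapsto \textbf F(x;\textbf y)$ and using $\textbf F(x;\textbf 0)\neq 0$ to provide a nonzero seed shows that the Taylor coefficients of each $y_i$ are non-negative. By Pringsheim's theorem each $y_i$ has a positive real dominant singularity, and the strong connectivity of the dependency graph forces these singularities to coincide at a common value $x_0$.

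For (ii), I would argue by contradiction: if $\det(\textbf I_k-\mathrm{Jac}(\textbf F))\neq 0$ at $(x_0,\textbf y(x_0))$, the analytic implicit function theorem would analytically continue $\textbf y$ past $x_0$. This uses boundedness of $\textbf y(x)$ as $x\to x_0^-$, which under the entire/positivity hypotheses is a standard Meir--Moon argument (an unbounded component would force some other component to explode earlier because of strong connectivity). Thus $(x_0,\textbf y(x_0))$ must solve the characteristic system $\textbf y=\textbf F(x;\textbf y),\ \det(\textbf I_k-\mathrm{Jac}(\textbf F))=0$, and by hypothesis this pair is unique.

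For (iii), which is the main difficulty, the matrix $\mathrm{Jac}(\textbf F)(x_0,\textbf y(x_0))$ has non-negative entries and, by strong connectivity, is irreducible. Perron--Frobenius theory then gives a simple dominant eigenvalue equal to $1$ (forced by the determinant condition) with strictly positive left and right eigenvectors $\textbf u$, $\textbf v$. Writing $\textbf y(x)=\textbf y(x_0)+\textbf v\, u(x)+\textbf w(x)$ with $\textbf w$ in the complementary invariant subspace, projecting the system on $\textbf u$ and using the non-linearity hypothesis (which ensures a nonzero quadratic term in the projected equation) yields a scalar equation of the shape
\[
x_0-x \;=\; a\, u(x)^2 + O\!\bigl(u(x)^3\bigr), \qquad a>0.
\]
Local inversion produces $u(x)=c\sqrt{1-x/x_0}\,(1+o(1))$ with $c>0$, which, substituted back, gives the announced square-root expansion for every component $y_i$. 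A standard aperiodicity argument (using that $\textbf F(x;\textbf 0)\neq 0$ and that $\textbf F$ is entire) then shows analyticity in a dented domain $\Delta_{x_0}(\phi,R)$, as required to apply Theorem~\ref{thm:transfer}. The principal obstacle is precisely this last stage: verifying non-degeneracy of the quadratic term (so the singularity is genuinely of square-root type rather than higher order) and controlling $\textbf y$ near $x_0$; I would import these from the full treatment in \cite{Drmota1997, DrmotaBook} rather than reprove them.
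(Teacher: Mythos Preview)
The paper does not prove this statement at all: Theorem~\ref{thm:drmota} is quoted from Drmota's book \cite{DrmotaBook} (Section~2.2.5) as background material and is used as a black box in Lemmas~\ref{lem:varD} and~\ref{lem:maximalSPnetworks}. There is therefore no ``paper's own proof'' to compare your proposal against.

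That said, your sketch follows exactly the route taken in the original source \cite{Drmota1997, DrmotaBook}: existence of a unique analytic non-negative solution near $0$, identification of the common dominant singularity via strong connectivity and failure of the implicit function theorem, and the Perron--Frobenius reduction to a scalar Weierstrass-type equation yielding the square-root expansion. So while there is nothing in the present paper to match it to, your outline is the standard one and is essentially correct as a high-level summary. The genuine technical work you defer at the end (finiteness of $\textbf{y}(x_0^-)$, non-vanishing of the quadratic coefficient $a$, and analytic continuation into a full dented domain rather than just along the positive real axis) is indeed where the substance of Drmota's proof lies, and you are right to flag that you would import these rather than reprove them.
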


In order to obtain asymptotic estimates we need to assure that the dominant singularity is unique in a dented domain.
This condition is usually satisfied whenever the counting formula $A(x)$ under consideration cannot be written in the form $A(x)=x^k f(x^r)$ for non-negative values $k\geq 0$ and $r\geq 2$.
More precisely, we say that a generating function $A(x)$ is \emph{aperiodic} if there exists a non-negative integer $n_0$ such that $[x^n]A(x)>0$ for $n\geq n_0$. Observe that checking the aperiodicity condition is straightforward whenever we know that for each number of vertices there exist graphs in the family under study.
The generating functions we consider in the forthcoming section (which are defined by an implicit equation, or by means of Theorem~\ref{thm:drmota}) will satisfy the aperiodicity condition by obvious combinatorial reasons. This will imply uniqueness of the dominant singularity.
See \cite{DrmotaBook} for details.




\section{Spanning trees in series-parallel graphs}
\label{sec:trees}

In this section we present the proofs of Theorem~\ref{thm:main} determining a precise asymptotic estimate for the expected number of spanning trees in random connected SP graphs. Additionally, we elaborate the expected number of spanning trees in a random 2-connected SP graph of a given edge density.
Recall that all graphs in this paper are considered to be labelled.
In order to count spanning trees in SP graphs, we are concerned with the enumeration of SP graphs carrying a distinguished spanning tree.
For this, let $c_{n,m}$ and $b_{n,m}$ now denote the number of connected, respectively 2-connected, SP graphs with a distinguished spanning tree and let $C(x,y)$ and $B(x,y)$ be their associated counting formula, where $x$ and $y$ mark again vertices and edges, respectively.
Furthermore, let $\cD$ denote the class of series-parallel networks carrying a distinguished spanning tree and let $D(x,y)$ be its associated generating function.

\subsection{Expected number of spanning trees in random SP graphs}
\label{sec:1stmoment}

The first step in our proof of Theorem~\ref{thm:main} is the enumeration of SP networks that carry a distinguished spanning tree.
For the purpose of counting spanning trees in SP networks, we need to introduce the following auxiliary class.
Let $\cbD$ denote the class of SP networks that carry a distinguished spanning forest with two components, each of which contains one of the poles. Let $\bD(x,y)$ be its associated EGF.

Recall that a network is either trivial, series or parallel.
By convention, we assume that networks with a root edge are parallel.
Therefore, we define the following classes of networks. Let $\cS$ and $\cbS$ denote the class of series networks that carry a distinguished spanning tree, respectively a distinguished spanning forest with two components each of which contains one of the poles.
We denote their associated EGFs by $S(x,y)$ and $\bS(x,y)$, respectively.
Similarly, let $\cP$ and $\cbP$ denote the class of parallel networks and which carry a distinguished spanning tree, respectively a distinguished spanning forest with two components, each of which contains one of the poles.
Observe that in both families the root edge might be present.
Their associated EGFs are denoted by $P(x,y)$ and $\bP(x,y)$, respectively.
For the sake of readability we may omit the parameters whenever they are clear from the context.

We start with elaborating relations between $D$, $\bD$, $S$, $\bS$, $P$, and $\bP$ in order to obtain a suitable system of equations. One can easily verify that
\begin{equation}
\label{eq:D1}
D(x,y) = y + S(x,y) + P(x,y),
\end{equation}
and
\begin{equation}
\label{eq:D2}
\bD(x,y) = y + \bS(x,y) + \bP(x,y).
\end{equation}
Note that in Equation~\eqref{eq:D1} the variable $y$ on the right-hand side corresponds to a trivial network with a distinguished spanning tree, whereas in Equation~\eqref{eq:D2} it corresponds to a trivial network with a distinguished spanning forest that consists of two components of size 1.

Let us now analyse series networks. Observe that a series network $N$ can be decomposed into at least two networks, where the 0-pole of the $i$-th network is identified with the $\infty$-pole of the $(i+1)$-th network. Equivalently, $N$ can be decomposed into an ordered sequence formed by a network $N'$ that is not series and an arbitrary network $N''$ that are joined by a series operation. If $N \in \cS$, then each of these two networks contains a distinguished spanning tree. Therefore, we have
\begin{equation}
\label{eq:S1}
S(x,y)= \big(D(x,y)-S(x,y)\big)x D(x,y)= \big(y+P(x,y)\big)xD(x,y).
\end{equation}

\begin{figure}[htb]
\begin{center}
\includegraphics[width=6 cm, page=1]{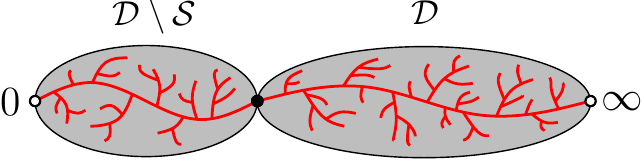}
\caption{Decomposition of $N\in \cS$.}
\label{fig:series}
\end{center}
\end{figure}

If $N \in \cbS$, then either $N' \in \cD \backslash \cS$ and $N'' \in \cbD$, or $N' \in \cbD \backslash \cbS$ and $N'' \in \cD$. This translates into the following equation:
\begin{eqnarray}\label{eq:S2}
\bS(x,y) &=& \big(D(x,y)-S(x,y)\big)x \bD(x,y)+  \big(\bD(x,y)-\bS(x,y)\big)xD(x,y) \nonumber \\
         &=&  \big(y+P(x,y)\big)x\bD(x,y) + \big(y+\bP(x,y)\big)xD(x,y).
\end{eqnarray}

\begin{figure}[htb]
\begin{center}
\includegraphics[width=6 cm, page=2]{series2.pdf}
\quad \quad
\includegraphics[width=6 cm, page =3]{series2.pdf}
\caption{Decomposition of $N \in \cbS$.}
\label{fig:series}
\end{center}
\end{figure}

Finally, we analyse parallel networks. A parallel network can be described as a set of at least one series network if the root edge is present, or of at least two series networks, otherwise. If $N \in \cP$, we need to distinguish between the case that the root edge is present and the case that it is not. In the second case, all series networks are in $\cbS$ except for one which is in $\cS$. If in the first case the root edge is in the distinguished spanning tree of $N$, then all series networks are in $\cbS$. If, on the other hand, the root edge is not in the spanning tree, then exactly one of the series networks is in $\cS$ and all other networks are in $\cbS$.
Thus, we get
\begin{equation}
\label{eq:P1}
P(x,y)=y\big(\exp(\bS(x,y))-1\big)+ y\big(S(x,y) \exp(\bS(x,y))\big) + S\big(\exp(\bS(x,y))-1\big).
\end{equation}
\begin{figure}[htb]
\begin{center}
\includegraphics[width=3.7 cm, height=3.7cm, page=3]{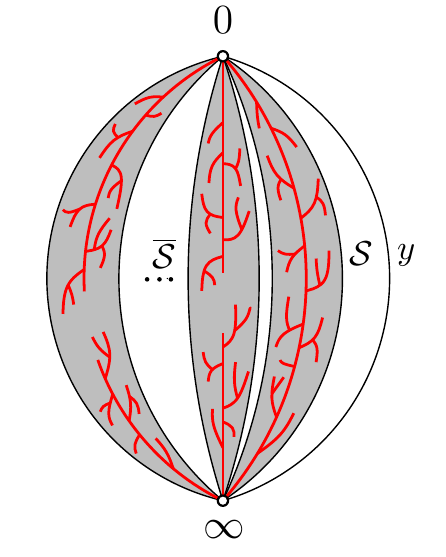}
\quad \quad
\includegraphics[width=3.7 cm, height=3.7cm,page =1]{parallel.pdf}
\quad \quad
\includegraphics[width=3.7 cm, height=3.7cm, page =2]{parallel.pdf}
\caption{Decomposition of $N \in \cP$.}
\label{fig:series}
\end{center}
\end{figure}
If $N \in \cbP$, then $N$ can be decomposed into the root edge if present and into series networks in $\cbS$ that are joined by a parallel operation. If the root edge is present, then there is at least one other network. In the other case, there must be at least two. This gives rise to the following equation:
\begin{equation}
\label{eq:P2}
\bP(x,y) = \big(\exp(\bS(x,y))-\bS(x,y)-1\big) + y\big(\exp(\bS(x,y))-1\big).
\end{equation}

\begin{figure}[htb]
\begin{center}
\includegraphics[width=3.8 cm, height=3.7cm, page=8]{parallel.pdf}
\quad \quad
\includegraphics[width=3.8 cm, height=3.7cm,page =9]{parallel.pdf}
\caption{Decomposition of $N \in \cbP$.}
\label{fig:series}
\end{center}
\end{figure}

Using formal manipulations, we get that the system of Equations~\eqref{eq:D1}-\eqref{eq:P2} defines the following implicit expression for $D(x,y)$:
\begin{equation}
\label{eq:DD1}
D= \left(y+(1+y)\frac{x D^2}{1+xD}\right) \exp\left(-xD\frac{(y(1+xD)-(1+y)D)(2+xD)}{(y(1+xD)+(1+y)xD^2)(1+xD)^2}\right).
\end{equation}
In order to study the singular behaviour of all the previous generating functions we could apply Dromta-Lalley-Woods methodology for systems of functional equations (see e.g.~\cite{flajolet2009analytic}). However, as in this particular case we have an expression for $D(x,y)$ not depending on any other variables but $x$ and $y$, we will analyse Equation~\eqref{eq:DD1} in order to get the singular behaviour of $D(x,y)$. The following theorem is reminiscent to Lemma 3.3. in~\cite{GiNoyRue2013}:

\begin{lemma}\label{lem:main_D}

Let $D(x,y)$ be the formal power series defined by the equation $\Phi(x,y;D(x,y))=0$, where
\begin{equation*}
\Phi(x,y;z)=z-\left(y+(1+y)\frac{ x z^2}{1+xz}\right) \exp\left(-xz\frac{(y(1+xz)-(1+y)z)(2+xz)}{(y(1+xz)+(1+y)xz^2)(1+xz)^2}\right).
\end{equation*}
Then, for every choice of $y>0$ it holds that $D(x,y)$ has a unique square-root singularity $R(y)$ such that $D(x,y)$ has a singular expansion of the following form in a domain dented at $x=R(y)$:
\begin{equation}\label{eq:sing_D}
D(x,y)= D_0(y) +D_1(y) X(y) + D_2(y) X(y)^2 + D_3(y) X(y)^3 + \bigO(X(y)^4),
\end{equation}
where $X(y)= \sqrt{1-x/R(y)}$. In particular, for $y=1$ we have the numerical values $x=R(1)=R\approx 0.05668$, $D_0(1) \approx 1.82404$ $D_1(1) \approx -1.52769$, $D_2(1) \approx 1.34779$ and $D_3(1) \approx-1.25138$.
\end{lemma}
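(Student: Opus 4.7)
The plan is to apply the smooth implicit-function schema from Chapter VII of \cite{flajolet2009analytic} to the equation $\Phi(x,y;D(x,y))=0$, writing $\Phi(x,y;z)=z-H(x,y,z)$ with $H$ denoting the explicit right-hand side appearing in \eqref{eq:DD1}. Fix $y>0$ throughout. First I would check that $H$ is analytic in a polydisc around $(0,y,y)$: the denominators $1+xz$ and $y(1+xz)+(1+y)xz^2$ are strictly positive at that point and the exponential factor is entire. Because $H(0,y,z)=y$ identically in $z$ and hence $\Phi_z(0,y;y)=1\neq 0$, the classical analytic implicit function theorem produces a unique analytic branch $D(x,y)$ at the origin with $D(0,y)=y$ that solves $\Phi=0$. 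Since $D$ is also the EGF of the combinatorial class $\cD$ of networks carrying a distinguished spanning tree, it has non-negative coefficients, so Pringsheim's theorem supplies a dominant positive real singularity $R(y)>0$.

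The core of the argument is to show that $R(y)$ is characterized by the branch-point system
\begin{equation*}
\Phi(x,y;z)=0,\qquad \Phi_z(x,y;z)=0.
\end{equation*}
Along the positive real axis the analytic continuation of $D(x,y)$ persists as long as $\Phi_z(x,y;D(x,y))\neq 0$; since $H$ and all its partial derivatives are positive for positive arguments, an end-of-branch monotonicity argument shows that the singularity is reached exactly when $\Phi_z$ first vanishes, giving a unique positive solution $(R(y),D_0(y))$ of the characteristic system inside the analyticity domain of $H$. The non-degeneracy conditions $\Phi_x(R(y),y,D_0(y))\neq 0$ and $\Phi_{zz}(R(y),y,D_0(y))\neq 0$ then hold by direct inspection of the explicit formula for $H$, using that $R(y),D_0(y)>0$ and that the explicit expression for $H$ has, at positive arguments, no cancellation in its partial derivatives of orders one and two with respect to $z$.

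With these hypotheses in place, the smooth implicit-function schema yields the Puiseux expansion
\begin{equation*}
D(x,y)=D_0(y)+D_1(y)X(y)+D_2(y)X(y)^2+D_3(y)X(y)^3+\bigO(X(y)^{4})
\end{equation*}
valid in a $\Delta$-domain dented at $R(y)$, where $X(y)=\sqrt{1-x/R(y)}$ and the leading coefficient is
\begin{equation*}
D_1(y)=-\sqrt{\frac{2R(y)\,H_x(R(y),y,D_0(y))}{H_{zz}(R(y),y,D_0(y))}}.
\end{equation*}
The coefficients $D_2(y)$ and $D_3(y)$ are then obtained by substituting this \emph{Ansatz} back into $\Phi=0$ and equating successive powers of $X(y)$, which is a purely algebraic computation. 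Uniqueness of the dominant singularity on the circle $|x|=R(y)$ follows from the aperiodicity remark at the end of Section~\ref{sec:analytic}, since networks with a spanning tree exist for every sufficiently large number of vertices.

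The numerical constants at $y=1$ are read off by solving the two equations of the characteristic system numerically, for instance by Newton iteration with a starting point informed by the order of magnitude of the first few Taylor coefficients of $D(x,1)$. This yields $R(1)\approx 0.05668$ and $D_0(1)\approx 1.82404$, and the values of $D_1(1)$, $D_2(1)$, $D_3(1)$ then follow from the closed-form expressions above. The main obstacle I anticipate is not the singularity-analysis machinery, which is standard, but rather the verification that the positive-real branch of $D=H(x,y,D)$ first meets the branch locus $\Phi_z=0$ at the characteristic point claimed and not earlier; this is the only step requiring a genuine monotonicity argument for the function $z\mapsto H(x,y,z)-z$ along the real branch, as opposed to a mechanical computation.
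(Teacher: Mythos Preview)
Your overall architecture matches the paper's, but the step you treat as routine is exactly the one that carries the actual work. You assert that $\Phi_{zz}(R(y),y,D_0(y))\neq 0$ ``by direct inspection \ldots\ no cancellation in its partial derivatives'', and earlier that ``$H$ and all its partial derivatives are positive for positive arguments''. Neither claim is justified, and the positivity claim is not even plausible: the exponent in $H$ carries the factor $y(1+xz)-(1+y)z$, which changes sign for positive $x,y,z$, so $H$ does \emph{not} have non-negative Taylor coefficients and the smooth implicit-function schema of \cite[Theorem~VII.3]{flajolet2009analytic} does not apply off the shelf. Consequently there is no monotonicity or sign argument available to rule out $\Phi_{zz}=0$ at the branch point.

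The paper closes this gap by an elimination argument. Writing $\Phi=z-A(x,y,z)\exp(B(x,y,z))$ with $A,B$ rational, one has $\Phi_z=1-C\exp(B)$ and $\Phi_{zz}=E\exp(B)$ for rational $C,E$. The hypothetical system $\Phi=\Phi_z=\Phi_{zz}=0$ then reduces (after clearing the exponential) to the two polynomial relations $zC=A$ and $E=0$; eliminating $z$ via a resultant yields a single polynomial $Q(x,y)$, which factors as $(-4y+yx-4)\,y\,(y+1)\,T(x,y)$ with $T$ a quartic in $x$. Each factor is then shown to have no root with $y>0$ and $0<x<1$ (the only non-obvious one, $T$, is handled by the inequality $6917\,y(1+y)^3 x>1867\,y^3(1+y)x^3$ in that range). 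This is precisely what certifies the square-root type of the singularity for \emph{every} $y>0$, not just $y=1$. Your ``direct inspection'' placeholder needs to be replaced by this computation, or by some equivalent argument that genuinely excludes a higher-order branch point. A secondary point: you invoke Pringsheim to locate the singularity on the positive axis but do not argue that $R(y)<\infty$; the paper obtains this by the coefficient-wise comparison $[x^n]D_\emptyset(x,y)<[x^n]D(x,y)$ with the known generating function of SP networks without a spanning tree.
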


\begin{proof} Fix $y>0$. A simple computation shows that $\Phi_{z}(0,y;D(0,y))=1>0$ and $D(0,y)=y$.
Hence, by the Implicit Function Theorem, $D(x,y)$ is analytic at $x=0$.

We start with showing that  $D(x,y)$ has a finite radius of convergence.
Denote the singularity of the function $D(x,y)$ by $R(y)$.
Observe that $ [x^n] D_{\emptyset}(x,y)< [x^n]D(x,y)$, where $D_{\emptyset}(x,y)$ is the generating function associated with SP networks without a distinguished spanning tree.
As it is shown in \cite{Bodirsky2007}, the radius of convergence $R_{\emptyset}(y)$ of $D_{\emptyset}(x,y)$ is finite.
In particular, $0<R(y)\leq R_{\emptyset}(y)<1<\infty$ and $D(x,y)$ ceases to be analytic at $x=R(y)$.

Observe that the only source of singularity for $D(x,y)$ is the condition $\Phi_z(R(y),y; D(R(y),y))=0$, meaning that the singularity arises from a branch point.
Let us now justify that we have $\Phi_{zz}(R(y),y; D(R(y),y))\neq 0$, which would give the claimed square-root expansion.
This condition is enough in order to assure, for each choice of $y$, and square-root type singularity.
For a contradiction, let us assume the opposite. Hence, we have a solution $(R_0,y_0,z_0)$ of the following system of equations:
\begin{equation*}
\Phi(x,y; z)=0,\,\, \,\Phi_{z}(x,y;z)=0,\,\,\,\Phi_{zz}(x,y; z)=0.
\end{equation*}
Observe that $\Phi(x,y;z)= z- A(x,y;z) \exp(B(x,y;z))$ with $A(x,y;z)$ and $B(x,y;z)$ being rational functions.
Hence, $\Phi_z(x,y;z)=1- C(x,y;z)\exp(B(x,y;z))$ where again $C(x,y;z)$ is a rational function.
Finally, $\Phi_{zz}(x,y;z)$ can be written in the form $E(x,y;z)\exp(B(x,y;z))$ for a certain rational function $E(x,y;z)$.

In particular, combining the first two equations by eliminating the exponential term, we get the following system of rational equations:
$$zC(x,y;z)=A(x,y;z),\, \, \, E(x,y;z)=0.$$
After rearranging the denominators in both expressions, such a system can be transformed into a system of two polynomial equations $P_1(x,y;z)=0,\,P_2(x,y;z)=0$, from which we can get a new polynomial equation $Q(x,y)=0$ by eliminating the variable $z$. By carrying out the explained computations with \texttt{Maple}, we obtain
$$
Q(x,y) = (-4y+yx-4)y(y+1) T(x,y),
$$
where
\begin{eqnarray*}
T(x,y)= 100(1+y)^4+ 6917y(1+y)^3x+1266y^2(1+y)^2x^2-1867y^3(1+y)x^3+280y^4x^4.
\end{eqnarray*}
We now argue that $Q(x,y)=0$ does not have a solution with both $y>0$ and $x<1$. Observe that the first multiplicative term $-4y+yx-4$ gives the solution $(x,y)=(4+4/y,y)$. This means in particular that $x$ is always greater than $1$. It is also obvious that the multiplicative terms $y$ and $y+1$ cannot contribute with the required solution. Therefore, we need to analyse the existence of solutions $(x,y)$ of $T(x,y)$ with the condition $y>0$ and $x<1$. Using that $y(1+y)^3>y^3(1+y)$, $x>x^3$ for all $y>0$ and $0<x<1$, we know that $6917y(1+y)^3x>6917 y^3(1+y)x^3>1867y^3(1+y)x^3$. Hence, $T(x,y)=0$ does not have solutions with both $0<x<1$ and $y>0$, which implies that the solution $(x_0,y_0,z_0)$ of the equation $\Phi(x,y;z)=\Phi_z(x,y;z)=0$ satisfies that $\Phi_{zz}(x_0,y_0;z_0)\neq 0$.
Hence, the singularity of $D(x,y)$ is of a square-root type in a domain dented at $x=R(y)$. This proves the singular expansion in Equation~\eqref{eq:sing_D}.

In order to prove the special case of $y=1$ in the statement of the lemma, we set $y=1$, $R=R(1)$, and  $X=X(1)$ (and consequently $x=R(1-X^2)$). By plugging the singular expansion of $D(x,1)$ 
 in $\Phi(x,y;z)=0$, taking the Taylor expansion in terms of $X$, and applying the method of indeterminate coefficients, we get the numerical values as claimed.
Finally, observe that for each choice of $y>0$, the generating function $D(x,y)$ is aperiodic, as for every $n$ there exists a network on $n$ vertices.
This gives that the singularity $R(y)$ is unique, and the result holds.
\end{proof}

Knowing that $D(x,y)$ admits a singular expansion of square root-type in a domain dented at $x=R(y)$, one can compute by means of indeterminate coefficients the exact expressions of $D_i(y)$ for $i\geq 1$ in terms of the function $D(R(y),y)=D_0(y)$, which satisfies the functional equation $\Phi(R(y),y,D_0(y))=0$.
Although the expressions are long, we need to compute for enumerative purposes the evaluations at $y=1$. For computational purposes, we include the following lemma where the coefficients of the singular expansions (rounded up to 5 digits) of the EGF $\bD(x,1)$, $S(x,1)$, $\bS(x,1)$, $P(x,1)$, and $\bP(x,1)$ are obtained.
Despite that in order to get asymptotic estimates for these counting formulas we only need the multiplicative constant of the term $(1-x/R)^{1/2}$, in order to get the precise asymptotic in the 2-connected level we need expansions up to term $(1-x/R)^{3/2}$.

\begin{lemma}
\label{lem:singD} For each value of $y>0$ the generating functions $\bD$, $S$, $\bS$, $P$, and $\bP$ have a square-root singular expansion in a domain dented at $R(y)$, where $R(y)$ is the unique singularity of $D(x,y)$. Furthermore, for $y=1$ the singular expansions (with rounded coefficients) of $\bD$, $S$, $\bS$, $P$, and $\bP$ in a domain dented at $R \approx 0.05668 $ are
\begin{align*}
\bD(x,1)&= \bD_0(1)+ \bD_0(1) X+ \bD_2(1) X^2+ \bD_3(1) X^3    + \bigO(X^4)\\
S(x,1)&= S_0(1) +S_1(1) X + S_2(1) X^2+ S_3(1) X^3+  \bigO(X^4)\\
\bS(x,1)&= \bS_0(1) +\bS_1(1) X + \bS_2(1) X^2 +\bS_3(1) X^3+ \bigO(X^4)\\
P(x,1)&= P_0(1) +P_1(1) X  +P_2(1) X^2 +P_3(1) X^3+   \bigO(X^4)\\
\bP(x,1)&= \bP_0(1) +\bP_1(1) X + \bP_2(1) X^2 +\bP_3(1) X^3+ \bigO(X^4),
\end{align*}
where  $X= \sqrt{1-x/R}$, and the constants have the following approximate values:

\begin{center}
  \begin{tabular}{c|cccc}

              & $i=0$  & $i=1$ & $i=2$ & $i=3$ \\
    \hline
    $\bD_i(1)$ & $1.71871$ & $-1.17120$ & $1.17120$ & $-0.59820$ \\
     $S_i(1)$  &  $0.17092$     & $-0.27289$        & $0.18433$      & $- 0.15440$  \\
        $\bS_i(1)$  &  $0.30701$     & $-0.43079$        & $0.19616$      & $-0.12220$  \\
        $P_i(1)$  &  $0.65312$     & $-1.25480$        & $1.16347$      & $-1.09697$  \\
        $\bP_i(1)$  &  $0.41170$     & $-0.74041$        & $0.58941$      & $-0.47600$  \\
  \end{tabular}
  \end{center}
\end{lemma}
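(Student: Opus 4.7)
Lemma~\ref{lem:main_D} already gives a square-root expansion
$$D(x,y)=D_0(y)+D_1(y)X+D_2(y)X^2+D_3(y)X^3+\bigO(X^4),\qquad X=\sqrt{1-x/R(y)},$$
in a domain dented at $R(y)$. The plan is to express each of the remaining five functions as an explicit analytic function of $D$, $x$, $y$ in a neighbourhood of $(R(y),y,D_0(y))$; then square-root type is automatically inherited by composition, and the numerical constants at $y=1$ are obtained by plugging the known expansion of $D(x,1)$ into these explicit formulas and reading off the coefficients.

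\textbf{Extracting the rational and logarithmic forms.} Combining \eqref{eq:D1} and \eqref{eq:S1} eliminates $P$ via $y+P=D-S$, yielding $S=(D-S)xD$, hence the rational expression
$$S(x,y)=\frac{xD(x,y)^2}{1+xD(x,y)},\qquad P(x,y)=D(x,y)-y-S(x,y).$$
For the barred generating functions, equation \eqref{eq:P1} can be rewritten as $P+y+S=\exp(\bar S)\,(y+(1+y)S)$; using $P+y+S=D$ this gives
$$\bar S(x,y)=\log\!\left(\frac{D(x,y)}{y+(1+y)S(x,y)}\right),$$
after which \eqref{eq:P2} delivers $\bar P=(1+y)e^{\bar S}-\bar S-1-y$, and \eqref{eq:D2} delivers $\bar D=y+\bar S+\bar P$. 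Each of $S,P,\bar S,\bar P,\bar D$ is thus written as a rational expression in $x,y,D$ composed with at most one logarithm.

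\textbf{Transfer of the singular expansion.} To invoke composition, the outer functions must be analytic at $(R(y),y,D_0(y))$. Rationality of $S,P$ in $(x,y,D)$ requires only $1+xD\neq 0$ at the critical point, which is clear from positivity. For the logarithmic expression, we must check that $y+(1+y)S(R(y),y)$ is strictly positive; this is immediate since $S$ is a counting series with non-negative coefficients and $y>0$. Hence the outer functions are analytic at the critical value, so substituting the square-root expansion of $D$ produces a square-root expansion of the same type for each of $S,P,\bar S,\bar P,\bar D$ in the same dented domain at $R(y)$. The main (minor) obstacle is precisely this verification of non-vanishing denominators and positive arguments of $\log$: it is routine but must be carried out, since a zero of $y+(1+y)S$ at $R(y)$ would create a logarithmic singularity of a different nature.

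\textbf{Computing the coefficients at $y=1$.} Specialising $y=1$ and using the numerical values $R\approx 0.05668$, $D_0(1)\approx 1.82404$, $D_1(1)\approx -1.52769$, $D_2(1)\approx 1.34779$, $D_3(1)\approx -1.25138$ from Lemma~\ref{lem:main_D}, I would substitute $x=R(1-X^2)$ and the expansion of $D(x,1)$ into the formulas for $S$ and $P$, expand each of $xD^2$ and $1+xD$ as a polynomial in $X$ up to order $X^3$, and divide power-series-wise to obtain $S_i(1)$; then $P_i(1)=D_i(1)-\delta_{i,0}-S_i(1)$. Next, compute the expansion of $y+(1+y)S=1+2S$ at $y=1$, divide $D$ by it, and apply $\log$ to get $\bar S_i(1)$; finally $\bar P_i(1)=2e^{\bar S}-\bar S-2$ and $\bar D_i(1)=1+\bar S_i(1)+\bar P_i(1)$ give the remaining columns. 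All of this is mechanical truncated series arithmetic in the variable $X$ (best handed to \texttt{Maple}), yielding the tabulated five-digit values.
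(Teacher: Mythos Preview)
Your proposal is correct and follows essentially the same approach as the paper: the paper's proof simply observes that Equations~\eqref{eq:D1}--\eqref{eq:P2} allow one to express $\bD,S,\bS,P,\bP$ analytically in terms of $D$, so the square-root expansion at $R(y)$ is inherited, and then plugs in the expansion of $D(x,1)$ to obtain the numbers. You carry out explicitly what the paper leaves implicit (the closed forms $S=xD^2/(1+xD)$, $\bS=\log\bigl(D/(y+(1+y)S)\bigr)$, etc.) and add the verification that the denominators and the logarithm's argument do not vanish at the critical point, which is a welcome bit of care the paper omits.
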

\begin{proof} The first claim follows directly due to Equations \eqref{eq:D1}-\eqref{eq:P2}, which are analytic and allow us to express $\bD$, $S$, $\bS$, $P$, and $\bP$ in terms of $D$. In particular, all these generating functions have a unique singularity at $x=R(y)$. The second part follows by setting $y=1$ and by plugging the singular expansion of $D(x,1)$ into Equations \eqref{eq:D1}-\eqref{eq:P2}.
\end{proof}

Now we turn to the analysis of $B(x,y)$, the EGF associated with the class of 2-connected SP graphs carrying a distinguished spanning tree. In our context, Equation~\eqref{eq:BD} translates to
\begin{equation}\label{eq:BD2}
2(1+y)B_y(x,y)=x^2\Big(1+D(x,y)+\bD(x,y)-y\big(\exp(\bS(x,y)-1)\big)\Big)
\end{equation}
which roughly speaking means that when marking an edge in a 2-connected SP graph with a distinguished spanning tree, the resulting object is a network either of type $\cD$ or $\cbD$, but not a parallel network of type $\cP$ with an edge linking the poles (see Equation \eqref{eq:P2}).
A direct integration of Equation~\eqref{eq:BD2} is technically involved due to the relations between the generating functions associated with the different types of networks.
However, we can get a simple expression of $B(x,y)$ in terms of the EGF associated with the networks just by combinatorial arguments. In the following lemma we provide such an equation.
\begin{lemma}
\label{lem:dissymmetryB}
The EGF $B(x,y)$ associated with the class of 2-connected SP graphs carrying a distinguished spanning tree can be expressed as
\begin{equation}
\label{eq:dissymmetryB}
B(x,y) = \frac{x^2}{2}y+ B_R(x,y) + B_M(x,y) - B_{R-M}(x,y),
\end{equation}
where
\begin{align}
B_R(x,y) &= \frac{x^2}{2}S(\bD-\bS), \label{eq:BR}\\
B_M(x,y) &= \frac{x^2}{2}\Big(S \big(\exp(\bS)-\bS-1\big)+ yS \big(\exp(\bS)-1\big) + y \big(\exp(\bS)-\bS-1\big)\Big), \label{eq:BM}\\
B_{R-M}(x,y) &= \frac{x^2}{2}(S\bP+\bS P). \label{eq:BRM}
\end{align}
\end{lemma}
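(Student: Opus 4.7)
The plan is to apply Theorem~\ref{thm:dissymmetry} (specialized to SP graphs as in~\eqref{eq:dissymmetry}) to the tree-decomposable class $\cB$ of 2-connected SP graphs carrying a distinguished spanning tree. Since SP graphs have no 3-connected components, the RMT-tree decomposition yields $\cB \simeq \cB_R + \cB_M - \cB_{R-M}$ for all such graphs on at least three vertices; the remaining single-edge graph $K_2$, whose RMT-tree is degenerate, must be handled separately and contributes precisely the summand $\frac{x^2}{2}y$ in~\eqref{eq:dissymmetryB}.

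The key combinatorial observation driving the rest of the proof is a \emph{pole-connectivity dichotomy}: if $G$ carries a spanning tree $T$, then at every virtual edge arising in Tutte's decomposition, the restriction of $T$ to each of its two sides is either a spanning tree of that side (connecting the two pole-vertices) or a two-component spanning forest separating them. Consequently, inside an R-brick (a cycle of length $k \geq 3$) exactly one of the $k$ sides is separating and the others are connecting, while inside an M-brick (a multi-edge with $k \geq 3$ slots) exactly one slot is connecting and the others are separating. This aligns perfectly with the split between $\cD, \cS, \cP$ and $\cbD, \cbS, \cbP$ set up in Equations~\eqref{eq:D1}--\eqref{eq:P2}.

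With this in hand I would derive each summand by a direct combinatorial argument. For $B_R$, the spanning tree canonically designates the unique broken side of the distinguished R-brick: its two endpoints are labelled (factor $x^2$), the two traversal orientations of the cycle are quotiented out (factor $1/2$), the broken side itself contributes $y + \bP = \bD - \bS$ (either a real edge missing from $T$, or a parallel substitute carrying a pole-separating spanning forest), and the remaining $k-1 \geq 2$ connecting sides assemble as a series of non-series networks each carrying a spanning tree, which is $S$. For $B_M$, after the $\frac{x^2}{2}$ for the labelled pair of poles, a case split on (i) a real edge exists and is the connecting slot, (ii) a real edge exists and lies outside $T$, or (iii) no real edge is present, combined with the at-most-one-real-edge constraint forced by simplicity of $G$, yields precisely the three summands of~\eqref{eq:BM}. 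For $B_{R-M}$, the distinguished RMT-tree edge corresponds to a virtual edge whose R-side is (after removing the virtual edge itself) a series network $S$ or $\bS$ and whose M-side is correspondingly a parallel network $P$ or $\bP$; the requirement that the two poles be joined in $T$ through exactly one side produces $\frac{x^2}{2}(S\bP + \bS P)$.

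The most delicate part will be the bookkeeping needed to guarantee that each formula counts each triple (graph, spanning tree, brick or RMT-edge mark) exactly once. I will have to verify that the substituted pieces truly lie in the claimed classes (non-series substitutions at virtual edges of R-bricks and non-parallel substitutions at virtual slots of M-bricks), that the at-most-one-real-edge restriction coming from simplicity of $G$ is respected throughout, and that the $\frac{1}{2}$ symmetry factor absorbs precisely the two-fold orientation ambiguity in each of the three situations. Once these checks are in place, substituting the formulas into~\eqref{eq:dissymmetry} and adding back the $K_2$ contribution delivers~\eqref{eq:dissymmetryB}.
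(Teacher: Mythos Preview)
Your proposal is correct and follows essentially the same route as the paper: apply the dissymmetry identity~\eqref{eq:dissymmetry} to the RMT-tree of a 2-connected SP graph with a distinguished spanning tree, handle $K_2$ separately, and derive $B_R$, $B_M$, $B_{R-M}$ by the pole-connectivity dichotomy (exactly one separating side in an R-brick, exactly one connecting slot in an M-brick). Your case split for $B_M$ and your reading of the $\tfrac{x^2}{2}$ factor are the same as the paper's, only phrased a bit more explicitly.
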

\begin{proof}
Applying Tutte's decomposition to 2-connected SP graphs bearing a distinguished spanning tree on at least 3 vertices only yields R-bricks (ring graphs) and M-bricks (multi-edge graphs), both carrying a distinguished spanning tree. In particular, there are no T-bricks since the set of $h$-networks is empty in our case.
We obtain Expression~\eqref{eq:dissymmetryB} for $B(x,y)$ using Equation~\eqref{eq:dissymmetry} to which we needed to add $x^2y/2$ since we also consider a single edge to be a 2-connected SP graph.

Let us study each term.
Let $R$ be a distinguished R-brick with a distinguished spanning tree.
By definition, $R$ is a cyclic chain of at least 3 networks that carries a spanning tree. In particular, exactly one of these networks is in $\cbD$ while the other ones are in $\cD$. This means that $R$ can be decomposed into a non-series network in $\cbD$ and a series network in $\cS$ that are joined by a parallel operation and where the two poles are added to the graph, see also Figure~\ref{fig:Rbrick}.
This gives Equation~\eqref{eq:BR}.

\begin{figure}[htb]
\begin{center}
\includegraphics[width=5 cm, page=1]{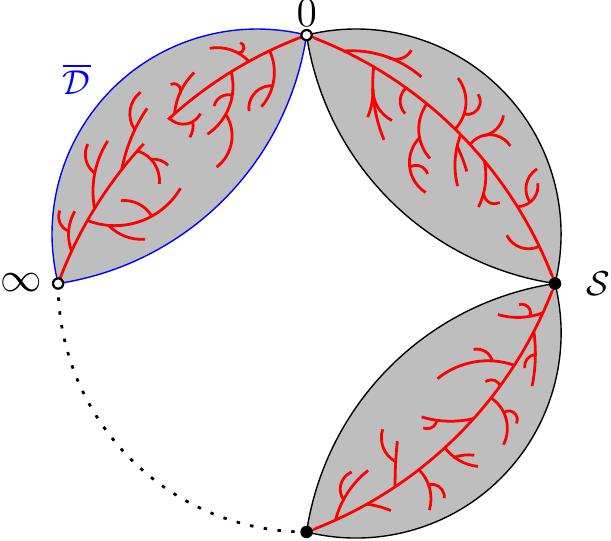}
\caption{Decomposition of a distinguished R-brick in the RMT-tree.}
\label{fig:Rbrick}
\end{center}
\end{figure}

We continue with M-bricks. Let $M$ be a distinguished M-brick with a distinguished spanning tree.
Then, $M$ can be decomposed into at least three networks, all but possibly one of which are series and the possibly other one a single edge.
These networks are joined by a parallel operation and the two poles are again added to the graph. This situation is similar to the decomposition of parallel networks carrying a spanning tree that we considered for developing Equation~\eqref{eq:P1}. The main difference is that, by definition, $M$ is decomposed into at least three and not into at least two networks. We need to distinguish again between the two cases where there is a single edge component in $M$ and where there is no such a component.
Observe that it is not possible that there are two such components in $M$ since we are only considering simple graphs.
In the former of the two cases, we note that if the edge of the single edge component is contained in the distinguished spanning tree, then exactly one of the series networks is in $\cS$ while all the others are in $\cbS$. If, on the other hand, the edge is not in the spanning tree, then all series networks must be in $\cbS$. This gives rise to Equation~\eqref{eq:BM}.

Finally, we need to decompose 2-connected SP graphs with a distinguished spanning tree and with a distinguished $\{R,M\}$-edge in the RMT-tree. This means, that the distinguished edge corresponds to a virtual edge $\{x,y\}$ matching a R-brick and a M-brick. Hence the graphs can be decomposed into a series network and a parallel network by a parallel operation, where we need to add again the two poles to the graph. One of the two networks must be in $\cD$ while the other one must be in $\cbD$. Hence, Equation~\eqref{eq:BRM} holds. See Figure~\ref{fig:RM} for an illustration of this situation.
\end{proof}

\begin{figure}[htb]
\begin{center}
\includegraphics[width=4.5 cm, page=2]{Rbrick.pdf}
\quad \quad
\includegraphics[width=4.5 cm, page=3]{Rbrick.pdf}
\caption{Decomposition of a distinguished $\{R,M\}$-edge in the RMT-tree.}
\label{fig:RM}
\end{center}
\end{figure}

We can now analyse the singular behaviour of $B(x,y)$.

\begin{lemma}
\label{lem:singB}
Let $y>0$. Then, $B(x,y)$ has a unique square-root singularity, which is the unique singularity $R(y)$ of the function $D(x,y)$ in Lemma~\ref{lem:main_D}. Moreover, $B(x,y)$ has a singular expansion of the following form in a domain dented at $x=R(y)$:
\begin{equation}\label{eq:sing_B}
B(x,y)= B_0(y)  +B_2(y) X(y)^2 +B_3(y) X(y)^3 + \bigO(X(y)^4),
\end{equation}
where $X(y)= \sqrt{1-x/R(y)}$. When $y=1$ we have the numerical values $x=R(1)=R\approx 0.05668$, $B_0(1) \approx 0.00176$, $B_2(1) \approx -0.00394$ and $B_3(1) \approx 0.00062$.
\end{lemma}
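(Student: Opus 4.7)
The proof plan begins by exploiting the closed-form expression for $B(x,y)$ given in Lemma \ref{lem:dissymmetryB}, which decomposes $B(x,y) = x^2y/2 + B_R + B_M - B_{R-M}$ as an explicit polynomial in $S, \bS, P, \bP, \bD$ and $\exp(\bS)$ multiplied by $x^2/2$. By Lemma \ref{lem:singD}, each of these constituent series has a square-root singular expansion in a common domain dented at $x = R(y)$, and each has $R(y)$ as its unique dominant singularity. Since polynomial combinations and composition with the entire function $\exp$ preserve the square-root structure and the singularity location, $B(x,y)$ admits an expansion of the shape
\begin{equation*}
B(x,y) = B_0(y) + B_1(y) X + B_2(y) X^2 + B_3(y) X^3 + \bigO(X^4)
\end{equation*}
in a dented domain at $x = R(y)$, and this $R(y)$ is also the unique dominant singularity of $B(x,y)$.

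The key non-trivial step is to show that $B_1(y) \equiv 0$, so that the singular exponent of $B(x,y)$ is $3/2$ rather than $1/2$. For this, I would differentiate $B$ with respect to $y$ at fixed $x$, applying the chain rule to $X = \sqrt{1 - x/R(y)}$. Using $\partial X/\partial y = x R'(y)/(2 R(y)^2 X)$, the formal expansion of $\partial B/\partial y$ picks up a term proportional to $B_1(y) X^{-1}$. On the other hand, Equation \eqref{eq:BD2} expresses $2(1+y) B_y(x,y) = x^2(1 + D + \bD - y(\exp(\bS)-1))$, and by Lemmas \ref{lem:main_D} and \ref{lem:singD} this right-hand side has a singular expansion in \emph{non-negative} powers of $X$ only. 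Comparing $X^{-1}$ coefficients, and using that $R'(y) \neq 0$ (which can be checked by implicit differentiation of the branch-point system $\Phi(R(y),y;D_0(y)) = 0$, $\Phi_z(R(y),y;D_0(y)) = 0$), forces $B_1(y) = 0$.

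The numerical values at $y=1$ then follow by substituting the tabulated expansion coefficients of $S, \bS, P, \bP, \bD$ from Lemma \ref{lem:singD} into Equations \eqref{eq:BR}-\eqref{eq:BRM}, expanding $\exp(\bS)$ to the needed order around $\bS_0(1)$, expanding $x^2/2 = R^2(1-X^2)^2/2$ up to order $X^3$, and collecting terms in $X$ through order $X^3$. Up to this order, the $X^1$ coefficient must vanish (providing a numerical consistency check on the previous step), and the coefficients $B_0(1), B_2(1), B_3(1)$ emerge as the claimed values. The main obstacle is the cancellation $B_1(y)=0$: a direct algebraic verification through substitution into Lemma \ref{lem:dissymmetryB} is possible but tedious, as it relies on a careful exploitation of the identities \eqref{eq:S1}-\eqref{eq:P2}; the derivative argument above bypasses this computation and reduces the obstruction to the non-vanishing of $R'(y)$.
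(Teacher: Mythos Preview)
Your proposal is correct and follows essentially the same approach as the paper: both express $B(x,y)$ via Lemma~\ref{lem:dissymmetryB} as an analytic combination of the network series (inheriting the unique singularity $R(y)$ and a square-root expansion), and both invoke Equation~\eqref{eq:BD2} to argue that $B_1(y)=0$. Your derivative argument is in fact a more explicit version of the paper's one-line appeal to Theorem~VI.9 of Flajolet--Sedgewick; the paper does not spell out the passage from $B_y$ back to $B$ via the $y$-dependence of $X(y)$, nor does it isolate the condition $R'(y)\neq 0$, so your write-up is slightly more careful on this point.
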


\begin{proof}
Observe that the generating functions $B_R(x,y)$, $B_M(x,y)$ and $B_{R-M}(x,y)$ are analytic transformations of the generating functions for networks (namely, the EGFs in Lemma~\ref{lem:dissymmetryB}).
Hence, $B(x,y)$ has a unique dominant singularity which is the same one as the coinciding singularity of EGFs of Lemma~\ref{lem:singD}, namely $R(y)$.
Similarly, for each $y$, $B(x,y)$ admits a singular expansion in a domain dented at $R(y)$.
In order to obtain it, we express the singular expansion of each of the network exponential generating functions appearing in Equation~\eqref{eq:dissymmetryB} in terms of the singular expansions obtained in Lemma~\ref{lem:singD}. Observe that Equation~\eqref{eq:BD2} implies that the singular expansion of $B(x,y)$ must start at $X(y)^3$, which gives in particular that $B_1(y)=0$ (see Theorem VI.9 from \cite{flajolet2009analytic}).

Finally, by setting $y=1$ and by the same procedure as above using \texttt{Maple}, we obtain the approximation of $B_i(1)$ for $i\geq 0$ as stated in the lemma. In particular, the term $B_3(1)$ depends on all singular coefficients in Lemma~\ref{lem:singD}.
\end{proof}
Finally, we analyse the generating function $C(x,y)$ of connected SP graphs carrying a distinguished spanning tree. Since the singular expansion of $B(x,y)$ is of a square-root type with exponent $1/2$ as it is shown in Equation~\eqref{eq:sing_B}, we get the singular expansion of $C(x,y)$  immediately from Proposition~3.10 in \cite{GiNoyRue2013} (see also \cite{Drmota2013}).
\begin{lemma}
\label{lem:singC}
The singularity of $C(x,y)$ is at $\brho(y) = \tau(y)/ \exp(B_x(\tau(y),y)),$ where $\tau(y)$ is the unique solution of the equation $\tau(y)B_{xx}(\tau(y),y) = 1$. The singular expansion of $C(x,y)$ in a domain dented at $\brho(y)$ is
$$C(x,y) = C_0(y)+ C_2(y) X(y)^2 + C_3(y)X(y)^3 + \bigO(X(y)^4),$$
where $X(y) = \sqrt{1-x/\brho(y)}$ and
\begin{align*}
C_0(y)&= \tau(1+ \log \brho(y) - \log \tau(y)) + B(\tau(y),y) ,\\
C_2(y)&= - \tau(y), \text{ and}\\
C_3(y)&= \frac{3}{2} \sqrt{\frac{2\brho(y) \exp(B_{x}(\brho(y),y))}{\tau B_{xxx}(\tau(y),y) - \tau B_{xx}(\tau(y),y)^2 + 2 B_{xx}(\tau(y),y)}} .
\end{align*}

Additionally, when $y=1$ we have $\brho(1)\approx 0.05288$, $C_0(1) \approx 0.05450$, $C_2(1)= - \tau \approx -0.05668$,  and $C_3(1)\approx  0.00145$.
\end{lemma}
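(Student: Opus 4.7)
The approach is to exploit the standard subcritical composition scheme encoded by Equation~\eqref{eq:CB}, namely $xC'(x,y) = x\exp(B'(xC'(x,y),y))$, which reflects the block decomposition of connected graphs enriched by a spanning tree. Setting $F(x,y) = xC'(x,y)$ turns this into $x = F\exp(-B_x(F,y))$, and a short Lagrangian manipulation (integration by parts in $\int_0^x F(t,y)/t\, dt$ after the substitution $u = F(t,y)$) produces the closed form
$$C(x,y) = F(x,y) - F(x,y)\, B_x(F(x,y),y) + B(F(x,y),y),$$
from which the singular expansion of $C$ is transparently read off that of $F$. This is precisely the situation covered by Proposition~3.10 of~\cite{GiNoyRue2013}, so the task reduces to verifying its hypotheses and then identifying the constants.

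Fix $y > 0$. Since $B(x,y)$ has a square-root singular expansion at $R(y)$ by Lemma~\ref{lem:singB}, the derivatives $B_{xx}(x,y)$ and $B_{xxx}(x,y)$ remain bounded as $x \uparrow R(y)$, so the relevant critical condition of the inversion is $F B_{xx}(F,y) = 1$. I would define $\tau(y)$ as the smallest positive solution and verify the subcritical inequality $\tau(y) < R(y)$ at $y = 1$ directly from the numerical data of Lemma~\ref{lem:singB}, then extend it to every $y > 0$ by continuity together with the strict monotonicity of $F \mapsto F B_{xx}(F,y)$ near the critical point; aperiodicity of $C(x,y)$ (for each $n$ a connected SP graph on $n$ vertices with a distinguished spanning tree exists) then guarantees uniqueness of the dominant singularity. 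Setting $\brho(y) = \tau(y)\exp(-B_x(\tau(y),y))$ thus gives the unique singularity of $F(x,y)$ and hence of $C(x,y)$.

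Next I would invert $x = \psi(F,y) := F\exp(-B_x(F,y))$ around $(\brho(y),\tau(y))$ via the smooth implicit-function schema of~\cite[Ch.~VII]{flajolet2009analytic}. Because $\psi_F(\tau(y),y) = 0$ and $\psi_{FF}(\tau(y),y) \neq 0$, this yields an expansion $F(x,y) = \tau(y) + f_1(y) X + f_2(y) X^2 + f_3(y) X^3 + \bigO(X^4)$ with $X = \sqrt{1 - x/\brho(y)}$, where $f_1(y)^2$ is an explicit rational expression in $\brho(y)$, $B_{xx}(\tau(y),y)$, and $B_{xxx}(\tau(y),y)$. Substituting into the closed form for $C(x,y)$ displayed above, the $X^2$ coefficient collapses to $-\tau(y)$ (the cross-terms involving $f_1(y)$ cancel against each other, which is also why no $X^1$ term survives), the $X^3$ coefficient simplifies to the announced expression for $C_3(y)$, and the analytic part is $\tau(y) - \tau(y) B_x(\tau(y),y) + B(\tau(y),y)$, which rewrites as $\tau(y)(1+\log\brho(y) - \log\tau(y)) + B(\tau(y),y)$ using the identity $B_x(\tau(y),y) = \log\tau(y) - \log\brho(y)$ coming from the definition of $\brho(y)$.

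The main technical obstacle is the careful bookkeeping required to propagate the three-term inversion through the closed form of $C$ while matching the compact shape of $C_3(y)$ displayed in the statement; once the subcritical condition has been established, everything else is routine algebra. For the numerical values at $y=1$, I would recover $B_x$, $B_{xx}$, and $B_{xxx}$ at the critical point by term-by-term differentiation of the square-root expansion given in Lemma~\ref{lem:singB}, solve $\tau B_{xx}(\tau,1) = 1$ numerically in Maple, and then evaluate $\brho(1)$, $C_0(1)$, $C_2(1)$, and $C_3(1)$ by direct substitution in the formulas above.
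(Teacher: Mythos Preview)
Your approach is essentially the paper's own --- both invoke Proposition~3.10 of~\cite{GiNoyRue2013}, and you have in fact spelled out more of the underlying mechanics (the Lagrangian closed form for $C$ in terms of $F=xC'$, and the smooth inversion of $\psi(F)=F\exp(-B_x(F,y))$) than the paper does.

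One point needs correction, however. The claim that $B_{xx}$ and $B_{xxx}$ remain bounded as $x\uparrow R(y)$ is false: Lemma~\ref{lem:singB} gives $B$ singular exponent $3/2$ (the expansion is $B_0+B_2X^2+B_3X^3+\cdots$ with no $X^1$ term), so $B_x$ has exponent $1/2$ and $B_{xx}$ diverges like $X^{-1}$. This actually \emph{helps} rather than hurts: since $\tau\mapsto\tau B_{xx}(\tau,y)$ is continuous and runs from $0$ to $+\infty$ on $(0,R(y))$, the equation $\tau B_{xx}(\tau,y)=1$ automatically has a solution strictly below $R(y)$, which is exactly the subcriticality you need and makes the argument uniform in $y$. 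By contrast, your proposed numerical verification of $\tau(1)<R(1)$ would be inconclusive at the displayed precision --- the paper reports $C_2(1)=-\tau\approx-0.05668$ while $R\approx 0.05668$, so the two agree to five decimals. Replace the boundedness remark and the numerical check by the divergence argument and the rest of your outline goes through unchanged.
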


\begin{proof} See the proof of Proposition 3.10 in \cite{GiNoyRue2013} for the analysis for a general value of $y$. When $y=1$, we use \texttt{Maple} to obtain the approximations of the constants. The unicity of the singularity is assured by the aperiodicity of $C(x,y)$ with $y$ being fixed. See for instance the proof of \cite[Lemma~7]{DrFuKaKrRu11} and \cite[Lemma~9]{DrFuKaKrRu11}.
\end{proof}

Now we have all necessary ingredients to prove the main theorem of this section.

\begin{proof}[Proof of Theorem~\ref{thm:main}]
We will prove the statement for $X_n$ in detail.
The result for $Z_n$ is obtained \emph{mutatis mutandis}. Let us denote by $\mathcal{C}_n$ the set of all connected SP graphs on $n$ vertices, and $\mathcal{C}^s_n$ the set of all connected SP graphs on $n$ vertices carrying a distinguished spanning tree. For a graph $G\in \mathcal{C}_n$ we write $s(G)$ for the number of spanning trees in $G$. Then, the expected value of $X_n$ can be written as:
\begin{equation}\label{eq:expectation}
\mathbb{E}[X_n]=\sum_{G\in \mathcal{C}_n} s(G) \mathbb{P}[G]=\frac{\sum_{G\in \mathcal{C}_n}s(G)}{|\mathcal{C}_n|}=\frac{|\mathcal{C}^s_n|}{|\mathcal{C}_n|}=\frac{[x^n]C(x,1)}{|\mathcal{C}_n|}.
\end{equation}
It follows directly from Lemma~\ref{lem:singC} and Theorem~\ref{thm:transfer} that the number of connected SP graphs on $n$ vertices that carry a distinguished spanning tree is asymptotically equal to $\frac{C_3(1)}{\Gamma(-3/2)}n^{-5/2} \brho(1)^{-n} n!$. The number of connected SP graphs on $n$ vertices is asymptotically equal to $c_s n^{-5/2}\varrho_s^{-n}n!$, where $c_s \approx 0.0067912$ and $\varrho_s \approx 0.11021$ are computable constants, as shown in~\cite[Theorem 3.7]{Bodirsky2007}. Dividing the former by the latter as in Equation~\eqref{eq:expectation}, we obtain that the expected value of $X_n$ is asymptotically equal to $s \varrho^{-n}$, where $s\approx 0.09063$ and $\varrho^{-1} \approx 2.08415$.

The corresponding result for 2-connected SP graphs is obtained analogously by using Theorem~2.6 of~\cite{Bodirsky2007}, which states that the number of 2-connected SP graphs on $n$ vertices is asymptotically equal to $b n^{-5/2} r^{-n} n!$, where $b \approx 0.00101$ and $r\approx 0.12800$.
\end{proof}

\subsection{Fixing the edge density and limiting distributions} \label{sub:fix-y}

The previous results can be used to study random SP graphs with a fixed edge density, as well as limiting distributions for the number of edges.
For sake of brevity, we only discuss the family of 2-connected SP graphs, but similar observations hold in the connected case.
The first main important observation is that the number of edges in a uniformly at random 2-connected SP graph carrying a distinguished spanning tree follows a normal limiting distribution:
Lemma~\ref{lem:singB} shows that the singular behaviour of $B(x,y)$ is the same when choosing $y$ in a real-valued neighbourhood of $1$. Then, by the Quasi-Powers Theorem by Hwang (see \cite{Hwang98}) the distribution follows a normal limit law with linear expectation and variance. In particular, the number of edges is concentrated around its mean value. This behaviour is similar to the case of 2-connected SP graphs (without a spanning tree), where again the number of edges is normally distributed (see \cite{Bodirsky2007}).

Under these circumstances, our techniques also provide a method to study the expected number of spanning trees in a random SP graph on $n$ vertices of a given edge density $\mu$.
Following the arguments of \cite[Theorem 3]{Gimenez2009}, for every $\mu>0$ we can choose a value $y_0 > 0$ such that if we assign the weight $y_0^k$
to each graph with $k$ edges, then only the graphs with $n$ vertices and with approximately $\mu n$ edges (with a deviation of order $n^{1/2}$) have non-negligible weight.
Such technique is valid whenever Quasi-Powers Theorem holds, and hence we can apply it in our context.

As a case example, we plot the expected value of the random variable $X_{n,\mu}$
that counts the number of spanning trees in a graph chosen uniformly at random from the 2-connected SP graphs with $n$ vertices and edge density $\mu$. 
Let $R(y)$ denote the radius
of convergence of $B(x,y)$.
Given an edge density $\mu$, the right choice for $y_0$ is the unique positive solution of the following equation (see e.g.~\cite[Theorem 3]{Gimenez2009}):
\begin{equation}\label{eq:mu}
-y_0 \frac{R_y(y_0)}{R(y_0)}=\mu.
\end{equation}
Observe that when $\mu$ tends to $1$, the family of SP graphs under study are graphs with a small but positive number of cycles,
whereas when $\mu$ tends to $2$, the subfamily under study tends to the class of $2$-trees.
These cases correspond to the ones when $y$ tends to 0 and infinity, respectively.
Both cases will be analyzed fully in detail in Sections \ref{sec:2trees} and \ref{sec:fixedexcess}, respectively.

\begin{figure}[htb]
\begin{center}
\includegraphics[width=7.7 cm, height= 7cm, trim= 3cm 10cm 5cm 1cm]{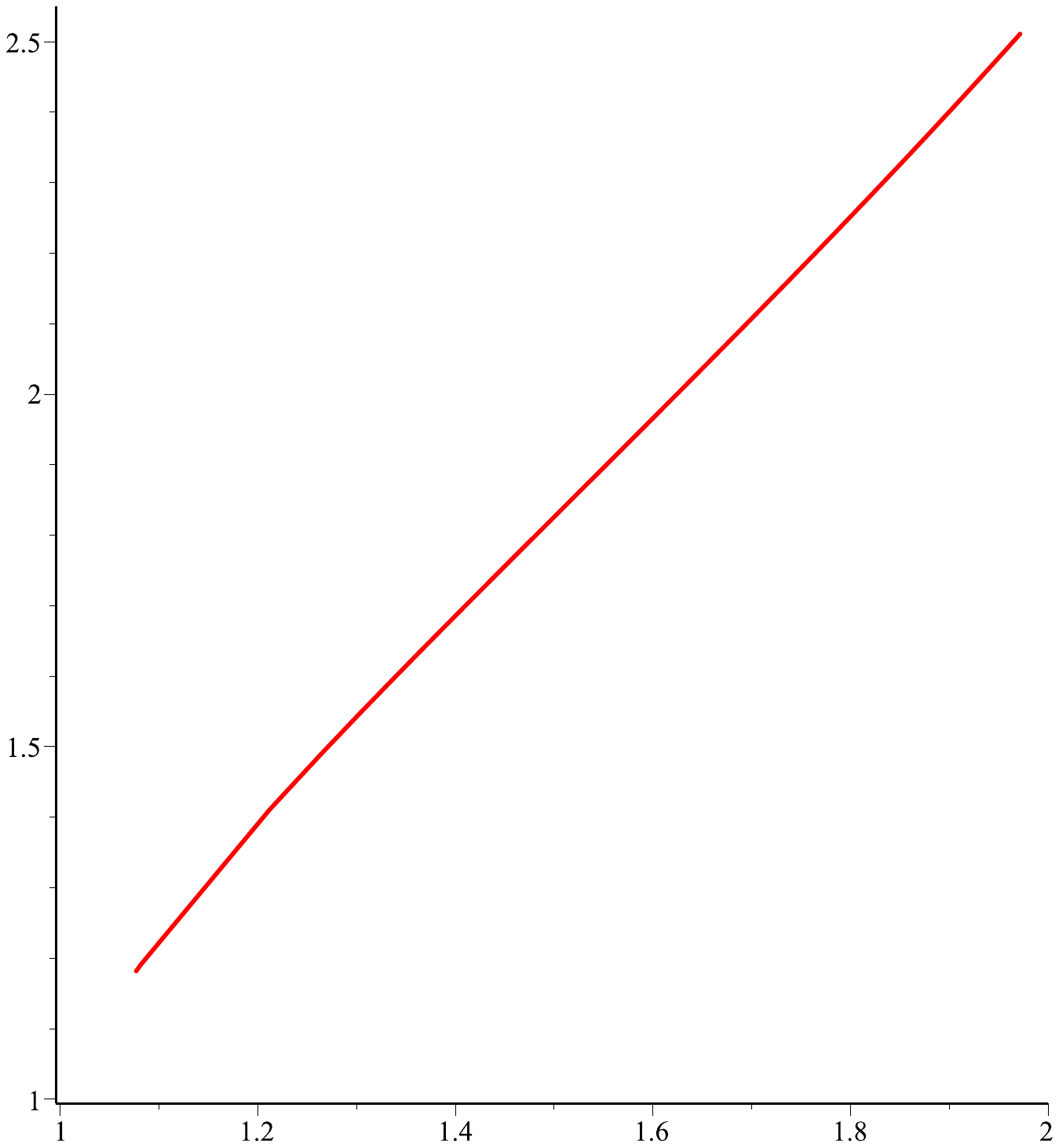}
\caption{Exponential growth constant of the expected number of spanning trees  in a random 2-connected SP graph (ordinate) as a function of its edge density (abscissa).}
\label{fig:plot}
\end{center}
\end{figure}

The precise computational method to obtain the exponential growth constant of the expected value of $X_{n,\mu}$ as a function of the edge density is the following.
For a given density $\mu$ we use~\eqref{eq:mu} to obtain the corresponding $y_0$. Then we use the implicit expression of the singularity curve stated in Theorem 2.2.~of~\cite{Bodirsky2007} in order to obtain the growth constant of the number of 2-connected SP graphs of edge density equal to $\mu$. To get the growth constant in the setting of 2-connected SP graphs carrying a spanning tree and with edge density $\mu$, we perform the calculations explained in the proof of Lemma~\ref{lem:main_D} for $y_0$.
Finally, the exponential growth of the expected value of $X_{n,\mu}$ is obtained by dividing these two numerical values as we did in the proof of Theorem~\ref{thm:main}.
In Figure~\ref{fig:plot} we plot this exponential growth constant in terms of the edge density $\mu\in (1.07626, 1.97173)$.

Let us mention that the non-plotted margins for $\mu$ correspond to values of $y$ very close to $0$ and when $y$ tends to infinity. In both cases, the numerical method used to get the constant growth for the number of spanning trees fails because of indetermination of the operation to be carried.
Detailed analysis of the two cases when the edge density reaches its maximum and when it tends to its minimum will be carried out in Section~\ref{sec:2trees} and in Section~\ref{sec:fixedexcess}, respectively.



\subsection{Variance of the number of spanning trees in random SP graphs}
\label{sec:2ndmoment}

Refining the combinatorics exploited in the proof of Theorem~\ref{thm:main}, one has also access to the second moment of the random variables $X_n$ and $Z_n$. In this subsection we develop this by determining the growth constant of the variance of $Z_n$. This will also show that $Z_n$ is not concentrated around its expected value. Recall that $Z_n$ was defined as the random variable which counts the number of spanning trees in a random 2-connected SP graph on $n$ vertices.

In order to determine the growth constant of the second moment of $Z_n$, we will first study the asymptotic behaviour of the number of 2-connected SP graphs on $n$ vertices carrying two distinguished spanning trees.
As in Subsection~\ref{sec:1stmoment}, we start with the analysis of networks carrying spanning trees and spanning forests.
We define $\cD^\ast$, $\cS^\ast$, and $\cP^\ast$ as the classes of SP, series, and parallel networks, respectively, each carrying two distinguished spanning trees.
Let $D^\ast(x,y)$, $S^\ast(x,y)$, and $P^\ast(x,y)$ denote their EGFs.
In order to be able to analyse these functions, we need again some auxiliary classes. Let $\ctD$ denote the class of all SP networks carrying a distinguished spanning tree and a distinguished spanning forest with (exactly) two components each of which contains one pole. Furthermore, let $\chD$ denote the class of all SP networks carrying two distinguished spanning forests both with (exactly) two components each of which contains one pole.
Let $\tD(x,y)$ and $\hD(x,y)$ denote their EGFs. In the same way $\ctS$, $\ctP$, $\chS$, and $\chP$ as well as $\tS(x,y)$, $\tP(x,y)$, $\hS(x,y)$, and $\hP(x,y)$ are defined. We might again omit the parameters whenever they are clear from the context. Following the proof of Theorem~\ref{thm:main}, we start with the following lemma that provides the growth constant of these generating functions.

\begin{lemma}\label{lem:varD}
For $y=1$, the generating functions $D^\ast$, $\tD$, $\hD$, $S^\ast$, $\tS$, $\hS$, $P^\ast$, $\tP$, and $\hP$ have a square-root expansion in a domain dented at $R_2\approx 0.02407$.
\end{lemma}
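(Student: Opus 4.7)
The plan is to extend the network decomposition of Subsection~\ref{sec:1stmoment} from a single spanning tree to a pair of distinguished spanning trees (or spanning forests) carried by each network, and to invoke Theorem~\ref{thm:drmota} on the resulting system of nine functional equations in the unknowns $D^\ast,\tD,\hD,S^\ast,\tS,\hS,P^\ast,\tP,\hP$.

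First, I would write down the analogues of Equations~\eqref{eq:D1}--\eqref{eq:P2}. The base relations $D^\ast=y+S^\ast+P^\ast$, $\tD=y+\tS+\tP$, $\hD=y+\hS+\hP$ are immediate, since on a trivial (two-vertex) network the unique spanning tree is the root edge and the unique 2-component spanning forest is the pair of isolated poles. For series networks, the gluing is along a single vertex, so each of the two decorations restricts independently to the two constituent networks: a spanning tree of the whole forces spanning trees on both parts, while a 2-component spanning forest of the whole forces one part to carry a spanning tree and the other to carry a 2-component spanning forest (splitting along the glue vertex). Reading off the four possible pairs (tree,tree), (tree,forest), (forest,tree), (forest,forest) for the two decorations gives formulas of the shape $S^\ast=(y+P^\ast)xD^\ast$ and analogous but longer expressions for $\tS$ and $\hS$ in terms of mixed products such as $(y+P^\ast)x\tD+(y+\tP)xD^\ast$ and $(y+\tP)x\tD+(y+\hP)xD^\ast$, mirroring the logic of \eqref{eq:S1}--\eqref{eq:S2}.

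Second, for parallel networks the gluing is along both poles, so each constituent series network may play any of the four roles above relative to the two decorations. Extending the case analysis of \eqref{eq:P1}--\eqref{eq:P2}, I would describe a parallel network with two distinguished trees as a set of series networks whose decoration types sum consistently to a pair of spanning trees of the whole (with the optional root edge contributing to either, both or neither of the two trees), and similarly for $\tP$ and $\hP$. This produces a system of polynomial--exponential equations in the nine variables where every right-hand side has nonnegative Taylor coefficients and the variable $x$ appears explicitly.

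Third, I would verify the hypotheses of Theorem~\ref{thm:drmota}: each component of $\mathbf{F}$ is entire, nonlinear, depends on $x$, satisfies $\mathbf{F}(0;\mathbf{y})=0$ and $\mathbf{F}(x;\mathbf{0})\neq 0$; the dependency graph is strongly connected because parallel decompositions of tree-decorated networks feed back into forest-decorated series classes (and vice versa, via series decompositions), linking every pair of vertices; aperiodicity is automatic since networks exist for every vertex count. Theorem~\ref{thm:drmota} then furnishes a common square-root singularity $R_2$ in a dented domain, together with the square-root expansion for each of the nine generating functions. Setting $y=1$ and solving $\mathbf{y}=\mathbf{F}(x;\mathbf{y})$ together with $\det(\mathbf{I}_9-\mathrm{Jac}(\mathbf{F}))=0$ numerically (e.g.\ with \texttt{Maple}) yields $R_2\approx 0.02407$.

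The main obstacle is not the analytic step, which is a direct application of Theorem~\ref{thm:drmota}, but the combinatorial bookkeeping for parallel networks carrying two distinguished decorations. Each series component contributes to four different classes according to the pair (tree/forest, tree/forest), and the requirement that the pieces glue to a spanning tree (resp.\ a 2-component spanning forest) of the whole under each of the two decorations imposes global constraints on how many series components of each type are allowed, together with careful handling of the optional root edge and whether it belongs to each of the two trees. Once this case analysis is carried out correctly and the resulting system is shown to meet the hypotheses of Theorem~\ref{thm:drmota}, the square-root singular expansion at the unique dominant singularity $R_2$ follows as in Lemma~\ref{lem:main_D} and Lemma~\ref{lem:singD}.
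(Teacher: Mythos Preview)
Your proposal is correct and follows essentially the same route as the paper: write down the nine-equation system (base relations, series decompositions, parallel decompositions with the four-way type bookkeeping), then invoke Theorem~\ref{thm:drmota} at $y=1$ and solve numerically for the common singularity $R_2$. The paper records the explicit equations \eqref{eq:netw-var}--\eqref{eq:par-var} (in particular $\hS$ picks up a cross term $2(\tD-\tS)x\tD$ that your sketch omits but correctly flags as ``longer''), bounds the radius of convergence by $R$ from Lemma~\ref{lem:main_D}, and appeals to aperiodicity exactly as you outline.
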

\begin{proof}
We start again with elaborating relations between all given generating functions.

One can verify easily that the following relations hold:
\begin{eqnarray}\label{eq:netw-var}
D^\ast(x,y)&=&y+S^\ast(x,y)+P^\ast(x,y),\nonumber \\
\tD(x,y) &=& y + \tS(x,y) + \tP(x,y), \\
\hD(x,y) &=& y + \hS(x,y) + \hP(x,y).\nonumber
\end{eqnarray}
Recall that a series network can be decomposed into a network that is not series and an arbitrary SP network. Similarly to Equations~\eqref{eq:S1} and~\eqref{eq:S2}, we get the following equations for the generating functions associated with the networks from the classes $\cS$, $\ctS$, and $\chS$.

\begin{eqnarray}\label{eq:ser-var}
S^\ast(x,y)   &=& \big(D^\ast(x,y)-S^\ast(x,y)\big)xD^\ast(x,y), \nonumber \\
\tS(x,y) &=& \big(\tD(x,y) - \tS(x,y)\big)x D^\ast(x,y) + \big(D^\ast(x,y) -S^\ast(x,y)\big) x \tD(x,y),\\
\hS(x,y) &=& \big(D^\ast(x,y) -S^\ast(x,y)\big)x\hD(x,y) + \big(\hD(x,y)-\hS(x,y)\big)x D^\ast(x,y)\nonumber\\
         & &  +2\big(\tD(x,y)-\tS(x,y)\big)x\tD(x,y).\nonumber
\end{eqnarray}

Finally, a parallel network can be decomposed into a set of at least one series network if the root edge is present, or into at least two series networks, otherwise. In the first case we need to distinguish whether the root edge is in a distinguished spanning tree or not. By a careful case distinction, we get the following three equations:
\begin{eqnarray}\label{eq:par-var}
P(x,y)   &=& S(x,y) \big(\exp(\hS(x,y))-1\big) + (1+y)\tS(x,y)^2\exp(\hS(x,y)) + yS(x,y) \exp(\hS(x,y))\nonumber \\
         & &+2y\tS(x,y)\exp(\hS(x,y)) + y\big(\exp(\hS(x,y) -1\big),\nonumber\\
\tP(x,y) &=& \big(y+ \tS(x,y)\big)\big(\exp(\hS(x,y))-1\big) +y\tS(x,y)\exp(\hS(x,y)), \\
\hP(x,y) &=& \big(\exp(\hS(x,y)) - \hS(x,y) -1\big) +y\big(\exp(\hS(x,y))-1\big).\nonumber
\end{eqnarray}

Equations \eqref{eq:netw-var}-\eqref{eq:par-var} define a system of functional equations that can be analysed by means of Theorem~\ref{thm:drmota} by setting $y=1$.
More precisely, each equation in this system is defined by an analytic function, because the exponential function is an entire function. In addition, easy lower
and upper bounds imply that the radius of convergence of $D(x,1),\, \tD(x,1),\, \hD(x,1)$  is in $(0, R]$, where
$R \approx 0.05668$ is the constant obtained in Lemma~\ref{lem:main_D}.
Solving the system of equations stated in Theorem~\ref{thm:drmota} using \texttt{Maple} we get that $R_2 \approx 0.02407$. Furthermore, Theorem~\ref{thm:drmota} assures that $R_2$ is the singularity of all generating functions appearing in Equations~\eqref{eq:netw-var}-\eqref{eq:par-var} and that they have a square-root expansion in a domain dented at $R_2$. As all generating functions written so far are aperiodic, the singularity is unique on the circle $|x|=R_2$.
\end{proof}

Let $B^\ast(x,y)$ denote the EGF associated with the class of all 2-connected SP graphs carrying two spanning trees.
As in Lemmas~\ref{lem:dissymmetryB} and~\ref{lem:singB} one can write $B^\ast(x,y)$ as an analytic combination of the generating functions from Lemma~\ref{lem:varD} by using Tutte's decomposition and Equation~\eqref{eq:dissymmetry}. Therefore, the dominant singularity of $B^\ast(x,1)$ is the same as the coinciding singularity of the EGFs appearing in Lemma~\ref{lem:varD}, which is $R_2$, and moreover, $B^\ast(x,1)$ has a square-root expansion in a domain dented at $R_2$. By Theorem~\ref{thm:transfer} we get that the number of 2-connected SP graphs on $n$ vertices carrying two spanning trees is asymptotically equal to
$\Theta\left(n^{-5/2}R_2^{-n}n!\right)$.

Let $\mathcal B_n$ denote the number of 2-connected SP graphs on $n$ vertices and $\mathcal{SB}_n^\ast$ the number of 2-connected SP graphs on $n$ vertices carrying two spanning trees. The second moment of $Z_n$ can be calculated in the following way:
$$\mathbb E[Z_n^2]= \sum_{G\in \mathcal B_n} s(G)^2 \mathbb P[G] = \frac{|\mathcal{SB}_n^\ast|}{|\mathcal{B}_n|} = \frac{[x^n]B^\ast(x,1)}{|\mathcal  B_n|},$$
where again $s(G)$ denotes the number of spanning trees in a graph $G$. Recall that the number of 2-connected SP graphs on $n$ vertices is asymptotically equal to $\Theta\left(n^{-5/2} r^{-n}n!\right)$, where $r\approx 0.12800$. This means that the second moment of $Z_n$ is asymptotically equal to $\Theta\left(\varpi_2^{-n}\right)$, where $\varpi_2^{-1} \approx 5.31718$. The same holds for $\mathrm{Var}(Z_n) = \mathbb E[Z_n^2] - \mathbb E[Z_n]^2$ since $\mathbb E[Z_n]$ is approximately equal to $\Theta\left(\varrho^{-n}\right)$, where $\varrho^{-1} \approx 2.08415$ by Theorem~\ref{thm:main}. In particular, this implies that $Z_n$ is not concentrated around its expected value.




\section{Spanning trees in $2$-trees} \label{sec:2trees}

In Subsection~\ref{sub:fix-y} we analysed the exponential growth constant of the expected number of spanning trees in a random 2-connected SP graph with respect to its edge density. Observe that if the edge density of SP graphs tends to its maximum, we reach the class of 2-trees. In this section we present an alternative, direct way to compute this growth constant in the setting of 2-trees. More precisely, we give a proof of Theorem~\ref{thm:2trees}, which is again based on the Symbolic Method, the extension of the Dissymmetry Theorem to tree-decomposable classes (Theorem~\ref{thm:dissymmetry}), and the singularity analysis of generating functions. As in the previous section, we use $x$ and $y$ to mark vertices and edges, respectively. In this particular scenario the number of edges is determined once fixing the number of vertices. However, for pedagogical reasons, we will write all the equations keeping track of both parameters. Recall that all graphs in this paper are considered to be labelled, unless otherwise specified.

In order to obtain the expected number of spanning trees in a random 2-tree, we aim to determine the asymptotic precise estimate of the number of 2-trees as well as of the number of 2-trees carrying a distinguished spanning tree. As a first step, Lemma~\ref{lem:labelled2trees} provides the singularity $\varrho_T$ and the singular expansion in a domain dented at $\varrho_T$ of the generating function $T(x,y)$ associated with the class of 2-trees.

\begin{lemma}
\label{lem:labelled2trees}
For $y=1$ it holds that $T(x,y)$ has a unique square-root type singularity $\varrho_{T} = 1/(2e)$ and admits the following singular expansion in a domain dented at $x=\varrho_T$:
$$T(x,1) = \frac{1}{12}e^{-3/2}-\frac{3}{16}e^{-3/2}X^2+\frac{\sqrt{2}}{48}e^{-3/2}X^3+ \bigO(X^4),$$
where $X= \sqrt{1-x/\varrho_T}$.
\end{lemma}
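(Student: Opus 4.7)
The plan is to combine the Symbolic Method, Theorem~\ref{thm:dissymmetry}, and singular analysis of smooth implicit equations. The main auxiliary object is the EGF $E(x,y)$ counting edge-rooted $2$-tree networks in which the two poles of the (oriented) root edge are unlabelled. Decomposing at the root edge according to the set of triangles containing it — each such triangle introducing one new labelled apex vertex together with two recursive $2$-tree networks hanging off the two newly created edges — yields the closed functional equation
$$E(x,y) = y \exp\bigl(x\, E(x,y)^2\bigr).$$

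Next I would recover $T(x,y)$ from $E(x,y)$ via dissymmetry. Applied to the natural tree on the edges and triangles of a $2$-tree (with adjacency given by incidence; a short induction confirms that this incidence graph is a tree, and a direct check covers the single-edge $2$-tree), Theorem~\ref{thm:dissymmetry} gives
$$T(x,y) = T_{\triangle}(x,y) + T_{e}(x,y) - T_{e-\triangle}(x,y),$$
where the three rooted classes correspond to a distinguished triangle, a distinguished edge, and a distinguished incident edge-triangle pair, respectively. Each of them decomposes combinatorially by attaching an $E$-network to each edge of the distinguished substructure; the resulting symmetry-factor bookkeeping yields
$$T_e = \frac{x^2}{2}\, E, \qquad T_{\triangle} = \frac{x^3}{6}\, E^3, \qquad T_{e-\triangle} = \frac{x^3}{2}\, E^3,$$
so that $T(x,y) = \frac{x^2}{2}\, E(x,y) - \frac{x^3}{3}\, E(x,y)^3$.

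For the singular analysis I specialize to $y = 1$ and set $u = x E^2$, which converts $E = e^{xE^2}$ into the smooth parametric representation $E = e^u$, $x = u e^{-2u}$. The characteristic function $u \mapsto u e^{-2u}$ has a unique critical point on $\mathbb{R}_{>0}$ at $u_0 = 1/2$, yielding $\varrho_T = 1/(2e)$ and $E(\varrho_T, 1) = \sqrt{e}$. A routine Newton-Puiseux inversion at this branch point — expanding $x(u)$ through third order in $u_0 - u$, solving for $u_0 - u$ as a power series in $X = \sqrt{1 - x/\varrho_T}$, and substituting into $E = e^u$ — produces a square-root singular expansion of $E(x,1)$. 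Plugging it, together with $x = \varrho_T(1 - X^2)$, into $T = \frac{x^2}{2} E - \frac{x^3}{3} E^3$ and collecting through order $X^3$, I expect the $X^1$ contributions to cancel and the remaining coefficients to reproduce exactly the values $\frac{1}{12}e^{-3/2}$, $-\frac{3}{16}e^{-3/2}$, $\frac{\sqrt{2}}{48}e^{-3/2}$ claimed in the statement. Uniqueness of the dominant singularity on $|x| = \varrho_T$ follows from the obvious aperiodicity of $T(x,1)$.

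The main obstacle I anticipate is organising the three rooted EGFs $T_{\triangle}, T_e, T_{e-\triangle}$ with the correct symmetry factors — keeping track of whether triangles are oriented, whether sub-network poles are ordered, and how these conventions interact with the symbolic product. Once the algebraic identity $T = \frac{x^2}{2}\, E - \frac{x^3}{3}\, E^3$ is in hand, the remaining work is a standard (if somewhat computational) exercise in singularity analysis for smooth implicit equations.
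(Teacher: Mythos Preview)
Your proposal is correct and arrives at the same closed form $T = \frac{x^2}{2}E - \frac{x^3}{3}E^3$ as the paper, but via a different route at the key step. The paper obtains this identity by marking an edge and integrating: from $y\,\partial_y T = \frac{x^2}{2}\overline{T}$ (where $\overline{T}$ is your $E$) one integrates with respect to $y$, using $\overline{T} = y\exp(x\overline{T}^2)$, to get $T = \frac{x^2}{2}\bigl(\overline{T} - \frac{2}{3}x\overline{T}^3\bigr)$. You instead apply the Dissymmetry Theorem to the edge--triangle incidence tree, which bypasses the integration entirely and is in fact exactly the device the paper reserves for the harder spanning-tree version in Lemma~\ref{lem:2trees}. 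The singular analyses are essentially identical: the paper inverts $\overline{T}(x,1)$ via $\psi(u) = \log(u)/u^2$, while your substitution $u = xE^2$ gives the equivalent parametrisation $x = u e^{-2u}$; both locate the branch point at $u_0 = 1/2$, $\varrho_T = 1/(2e)$, and the Puiseux expansion then proceeds the same way. Your approach has the advantage of being purely combinatorial and of working uniformly in $y$; the paper's integration is marginally shorter here only because the integrand happens to have an elementary primitive.
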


\begin{proof}
Let $\cbT$ denote the class of labelled 2-trees rooted at an edge the endpoints of which do not bear a label and let $\overline{T}(x,y)$ denote its associated generating function.
By the rules of the Symbolic Method for pointing operations we have the following relation between $T(x,y)$ and $\overline T(x,y)$:
$$y\frac{\partial}{\partial y} T(x,y) = \frac{x^2}{2} \overline{T}(x,y).$$
Observe that we had to add the factor $x^2/2$ on the right-hand side because we had to add labels to the endpoints of the root edge.
By integrating by substitution we get that
$$T(x,y) = \frac{x^2}{2} \int_{0}^{y} \frac{\overline{T}(x,z)}{z} dz = \frac{x^2}{2} \left(\overline{T}(x,y) - \frac{2}{3}x\overline{T}(x,y)^3\right).$$

In order to compute the radius of convergence of $T(x,y)$, it suffices to compute the radius of convergence of $\overline{T}(x,y)$ since their values coincide by the latter equation. A graph in $\cbT$ can be reconstructed by merging at the root edge a set of pairs of graphs in $\cbT$ that share a vertex, see Figure~\ref{fig:2tree}. Using the Symbolic Method this gives rise to the following equation for $\overline{T}(x,y)$:
$$\overline{T}(x,y) = y \exp\big(x\overline{T}(x,y)^2\big).$$
\begin{figure}[htb]
\begin{center}
\includegraphics[width=5.5 cm, page=6, trim= 0.5cm 0 0 0]{Rbrick.pdf}
\caption{Decomposition of rooted 2-trees.}
\label{fig:2tree}
\end{center}
\end{figure}
By Lagrange's Theorem~(see e.g.~\cite{flajolet2009analytic}) we get for every $n, m \geq 0 $ that
$$[x^n][y^m] \overline{T}(x,y) = [x^n] \frac{1}{m} [u^{m-1}] e^{mxu^2} = [x^n] \frac{1}{m} [u^{m-1}] m^{\frac{m-1}{2}} x^{\frac{m-1}{2}} = \left\{
        \begin{array}{ll}
            \frac{1}{\left(\frac{m-1}{2}\right)!}m^{\frac{m-3}{2}} &  \text{ if } n = \frac{m-1}{2} \\
            0 &  \text{ otherwise}.
        \end{array}
    \right. $$

We study the case $y=1$ since the number of edges of a 2-tree is determined by its number of vertices. The inverse function of $T(x,1)$ is given by $\psi(u)= \log(u)/u^2.$ Let $\tau >0$ such that $\psi'(\tau) = \big(1-2\log(\tau)\big)/\tau^3= 0$, implying $\tau = \exp(1/2)$. Then we know from the Inverse Function Theorem that we have for the singularity $\varrho_{T}$ of $\overline{T}(x,1)$  that $\varrho_{T} =\psi (\tau) = 1/(2e)$. Therefore, $\overline T(x,1)$ and hence also $T(x,1)$ have a square-root type singular expansion in a domain dented at $\varrho_T$. By the method of indeterminate coefficients we get that the singular expansion of $T(x,1)$ is of the form as stated in the lemma.
Finally, by aperiodicity of the generating functions under study, the dominant singularity of $T(x,1)$ is unique.
\end{proof}

Now let us turn to 2-trees carrying a spanning tree. We denote this class by $\cT^s$ and its associated generating function by $T^s(x,y)$. Similarly to Section~\ref{sec:trees}, we need to first analyse edge-maximal SP networks to get access to the singular behaviour of $T^s(x,y)$. For this purpose, let $D^{\overline{r}}(x,y)$ denote the EGF associated with the set of all edge-maximal SP networks carrying a spanning tree that contains the root edge. Furthermore, let $D^{r}(x,y)$ denote the EGF associated with the set of all edge-maximal SP networks carrying a spanning tree that does not contain the root edge. Finally, let $D^\circ(x,y)$ denote the EGF associated with the set of all edge-maximal SP networks with a distinguished spanning forest that consists of exactly two components each of which contains one of the poles. Observe that we have $D^{\overline{r}}(x,y) = D^\circ(x,y)$.
Lemma~\ref{lem:maximalSPnetworks} gives us the singular behaviours of $D^{\overline r}=D^\circ$ and $D^r$ for $y=1$.

\begin{lemma}
\label{lem:maximalSPnetworks}
We have $D^{\overline r} = D^\circ$ and for $y=1$ the generating functions $D^{\overline r}$ and $D^{r}$ have the same unique square-root singularity $R_T \approx 0.07197$. Furthermore, the singular expansions (with rounded coefficients) of $D^{\overline r}(x,1)$ and $D^r(x,1)$ in a domain dented at $x =R_T $ are:
\begin{align*}
D^{\overline r}(x,1)&= 1.46516 - 0.77028 X+ 0.53282 X^2- 0.40927 X^3 + \bigO(X^4), \\
D^r(x,1) &= 0.34588 -0.77028 X+ 0.87870 X^2- 0.92279 X^3 + \bigO(X^4),
\end{align*}
where $X= \sqrt{1-x/R_T}$.
\end{lemma}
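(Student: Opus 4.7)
The plan is to extend Lemma~\ref{lem:labelled2trees}'s ear decomposition $\overline{T} = y\exp(x\overline{T}^2)$ to the enriched setting where a spanning tree (or spanning forest) is distinguished, by a case analysis of the restriction of such a subgraph to each ear. First, I would observe $D^{\overline{r}} = D^\circ$ by means of the involution that removes (resp.\ adds) the root edge from (resp.\ to) the distinguished spanning tree: since the root edge lies in the tree, its removal produces a spanning forest with exactly two components, each containing one pole, and the inverse operation is well defined because the network still carries the root edge. This reduces the task to a system for $D^{\overline{r}}$ and $D^r$.

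I would then translate the ear decomposition into counting equations. A 2-tree network with root edge $\{s,t\}$ is that root edge together with a set of ears, each consisting of a new labelled vertex $v$ and two sub-2-tree networks $N_1,N_2$ rooted at $\{s,v\}$ and $\{v,t\}$ respectively. Since the only vertices of an ear $E$ shared with the rest of the network are $s$ and $t$, every component of the restriction $\tau\cap E$ of a spanning tree $\tau$ must contain $s$ or $t$; hence $\tau\cap E$ has either one component (a spanning tree of $E$) or two components separating $s$ from $t$. For $D^{\overline{r}}$, the main root edge being in $\tau$ forbids the one-component case; in the two-component case $v$ joins $s$'s or $t$'s component, and the same argument applied inside $N_1,N_2$ forces one sub-network to carry a spanning tree of weight $Z := D^{\overline{r}} + D^r$ and the other a two-pole spanning forest of weight $D^\circ = D^{\overline{r}}$, giving a total ear weight of $2xD^{\overline{r}}Z$. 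For $D^r$, the poles must be linked through exactly one distinguished ear $E^\ast$ whose restriction is a spanning tree of $E^\ast$ (forcing both its sub-networks to contribute $Z$, so $E^\ast$ contributes $xZ^2$), while all other ears behave as in the previous case. This yields
\[
D^{\overline{r}} = y\exp\!\big(2xD^{\overline{r}}Z\big)\quad\text{and}\quad D^r = y\cdot xZ^2\cdot\exp\!\big(2xD^{\overline{r}}Z\big) = xD^{\overline{r}}Z^2.
\]

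Summing, $Z = D^{\overline{r}}(1+xZ^2)$, and eliminating $D^{\overline{r}}$ yields the scalar implicit equation
\[
Z = y(1+xZ^2)\exp\!\Big(\tfrac{2xZ^2}{1+xZ^2}\Big).
\]
Specialising to $y=1$ and introducing the parameter $w := xZ^2$, the parametric inversion $x(w) = w(1+w)^{-2}e^{-4w/(1+w)}$ has its unique positive critical point at the positive root of $w^2+4w-1=0$, namely $w_0 = \sqrt{5}-2$. The closed forms
\[
Z_0 = (\sqrt{5}-1)\,e^{(3-\sqrt{5})/2},\quad R_T = w_0/Z_0^2,\quad D^{\overline{r}}(R_T,1) = e^{(3-\sqrt{5})/2},\quad D^r(R_T,1) = w_0\,D^{\overline{r}}(R_T,1)
\]
then evaluate exactly to the numerical values in the statement. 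That the singularity is of square-root type follows, as in the proof of Lemma~\ref{lem:main_D}, from checking $x''(w_0)\neq 0$; that $D^{\overline{r}}$ and $D^r$ inherit square-root expansions follows from their being analytic functions of $Z$ and $x$. Uniqueness of the dominant singularity on $|x|=R_T$ is a consequence of the obvious aperiodicity of the generating functions. The remaining coefficients of the singular expansions are then obtained by the method of indeterminate coefficients via \texttt{Maple}.

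The main obstacle is the ear-level case analysis: one must verify that every component of $\tau\cap E$ contains $s$ or $t$, count correctly the two symmetric placements of $v$ in the $D^{\overline{r}}$ recursion, and isolate the asymmetric role of the single distinguished ear in the $D^r$ recursion. Once this combinatorics is nailed down, the singularity analysis is routine and essentially parallels that of Lemma~\ref{lem:main_D}.
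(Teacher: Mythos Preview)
Your proposal is correct. The combinatorial step is identical to the paper's: the system
\[
D^{\overline r}=y\exp\!\big(2x(D^{\overline r}+D^r)D^{\overline r}\big),\qquad
D^{r}=yx(D^{\overline r}+D^r)^2\exp\!\big(2x(D^{\overline r}+D^r)D^{\overline r}\big)
\]
is exactly the system~\eqref{eq:maximalSPnetworks} the paper writes down (with the same justification via the ear decomposition of Figure~\ref{fig:SPnetworks}), and the identification $D^{\overline r}=D^\circ$ is the same observation the paper makes.

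Where you diverge is in the analytic step, and your route is more informative. The paper treats~\eqref{eq:maximalSPnetworks} as a black-box $2\times 2$ system, invokes Theorem~\ref{thm:drmota} to guarantee a common square-root singularity, and then lets \texttt{Maple} produce $R_T\approx 0.07197$ and the expansion coefficients numerically. You instead eliminate to a single implicit equation for $Z=D^{\overline r}+D^r$, then parametrise by $w=xZ^2$ to obtain the explicit inverse $x(w)=w(1+w)^{-2}e^{-4w/(1+w)}$. This buys closed forms that the paper does not have: $w_0=\sqrt{5}-2$, $R_T=w_0(\sqrt{5}-1)^{-2}e^{-(3-\sqrt5)}$, $D^{\overline r}(R_T,1)=e^{(3-\sqrt5)/2}$, $D^r(R_T,1)=(\sqrt5-2)e^{(3-\sqrt5)/2}$, all of which match the stated numerics. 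It also replaces the appeal to the systems theorem by the simpler one-variable criterion $x''(w_0)\neq 0$, in the spirit of Lemma~\ref{lem:main_D}. The trade-off is that the paper's approach requires no algebraic manipulation and generalises mechanically, whereas yours requires spotting the substitution $w=xZ^2$.
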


\begin{proof}
Using the Symbolic Method (see also Figure~\ref{fig:SPnetworks}) one can easily verify that the following system of equations hold:
\begin{equation}
\label{eq:maximalSPnetworks}
\begin{aligned}
D^{\overline r}(x,y) &= y \exp\Big(2\big(D^{\overline{r}}(x,y) +D^{r}(x,y)\big)xD^{\overline r}(x,y)\Big),\\
D^{r}(x,y) &= yx \big(D^{\overline r}(x,y)+D^{r}(x,y)\big)^2\exp\Big(2\big(D^{\overline r}(x,y)+D^{r}(x,y)\big)xD^{\overline r}(x,y)\Big).
\end{aligned}
\end{equation}
\begin{figure}[htb]
\begin{center}
\includegraphics[width=5.5 cm, trim= 0.5cm 0 0 0, page=4]{Rbrick.pdf}
\quad \quad
\includegraphics[width=5.5 cm, trim= 0.5cm 0 0 0, page=5]{Rbrick.pdf}
\caption{Possible decompositions of an edge-maximal SP network with a distinguished spanning tree that contains the root edge (depicted on the left side)/does not contain the root edge (depicted on the right side).}
\label{fig:SPnetworks}
\end{center}
\end{figure}

We may set $y=1$ since the number of edges of 2-trees is always given by the number of vertices. The two equations in~\eqref{eq:maximalSPnetworks} are defined by entire functions. As these structures carry spanning trees, the corresponding singularity is smaller than $\varrho_T$, and hence their singularity $R_T$ is finite.
We can apply Theorem~\ref{thm:drmota} and know therefore that $D^{\overline r}(x,1)$ and $D^{r}(x,1)$ have the same singularity $R_T$ and that they have a square-root singular expansion in a domain dented at this singularity.
Solving the system of equations stated in this theorem with \texttt{Maple} yields that $R_T \approx 0.07197$.
By aperiodicity of the counting formulas, this is the unique smallest dominant singularity.
By means of indeterminate coefficients we get the exact coefficients of the singular expansions of $D^{\overline r}(x,1)$ and $D^r(x,1)$ which are the ones as stated in the lemma.
\end{proof}

\begin{lemma}
\label{lem:2trees}
Let $y=1$. Then, $T^s(x,1)$ has a square-root singularity, which is the singularity $R_T$ of $D^{\overline r}(x,1)$ and $D^{r}(x,1)$. Moreover, $T^s(x,1)$ has a singular expansion of the following form in a domain dented at $x=R_T\approx 0.07197$:
$$T^s(x,1) =T^s_0 + T^s_2 X^2 + T^s_3 X^3 +\bigO(X^4),$$
where $X= \sqrt{1-x/R_T}$ and $T^s_0 \approx 0.00290$, $T^s_2 \approx -0.00669$, and $T^s_3\approx 0.00133$ are computable constants.
\end{lemma}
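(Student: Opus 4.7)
The plan is to mirror the approach of Lemma~\ref{lem:labelled2trees}, now enriched by the spanning-tree analysis of Lemma~\ref{lem:maximalSPnetworks}. Let $\overline{T}^{s}(x,y)$ denote the EGF of 2-trees carrying a distinguished spanning tree and rooted at an edge whose endpoints bear no labels. Such a rooted object is precisely an edge-maximal SP network (with the root edge being the root), and its distinguished spanning tree either contains the root edge or does not; hence
$$\overline{T}^{s}(x,y) = D^{\overline{r}}(x,y) + D^{r}(x,y).$$
The standard pointing identity then reads
$$y\,\frac{\partial T^s}{\partial y}(x,y) = \frac{x^2}{2}\bigl(D^{\overline{r}}(x,y)+D^{r}(x,y)\bigr).$$

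Next I would exploit that every 2-tree on $n$ vertices has exactly $2n-3$ edges, which gives the Euler-type identity $y\,\partial_y T^s = 2x\,\partial_x T^s - 3 T^s$. Combining this with the previous equation and specialising to $y=1$ yields the first-order linear ODE
$$2x\,h'(x) - 3\,h(x) = \frac{x^2}{2}\,F(x), \qquad h(0)=0,$$
where $h(x) := T^s(x,1)$ and $F(x) := D^{\overline{r}}(x,1) + D^{r}(x,1)$. A direct verification shows that the unique solution analytic at the origin is
$$h(x) = \frac{x^{3/2}}{4}\int_0^x u^{-1/2} F(u)\,du,$$
with initial expansion $h(x) = x^2/2 + \bigO(x^3)$, matching the single-edge 2-tree.

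By Lemma~\ref{lem:maximalSPnetworks}, $F$ admits a unique square-root singular expansion in a dented domain at $R_T$, and $u^{-1/2}$ is analytic in a neighbourhood of $R_T$; term-by-term integration preserves the square-root structure while raising the singular exponents by one, and multiplication by the analytic factor $x^{3/2}/4$ then transfers this expansion to $h(x)$. Aperiodicity of $T^s(\cdot,1)$ ensures that $R_T$ is the unique dominant singularity. I would then plug the ansatz $h = h_0 + h_1 X + h_2 X^2 + h_3 X^3 + \bigO(X^4)$, with $X = \sqrt{1-x/R_T}$, into the ODE and match coefficients against the expansion of $F$ from Lemma~\ref{lem:maximalSPnetworks}: regularity at $X=0$ (absence of a $1/X$ term on the right-hand side) forces $T^s_1 = 0$, and the $X^1$ balance immediately yields $T^s_3 = -(R_T^2/6)\,F_1 \approx 0.00133$, in agreement with the claimed value.

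The main technical subtlety is that $T^s_0$ and $T^s_2$ are not pinned down by the local singular data alone: the $X^0$ balance gives only the single relation $-3 T^s_0 - 2 T^s_2 = (R_T^2/2)\,F_0$ between them, with the remaining degree of freedom fixed by the global initial condition $h(0)=0$. I would resolve this by evaluating $\int_0^{R_T} u^{-1/2} F(u)\,du$ numerically in \texttt{Maple}, splitting the interval and using the Taylor expansion of $F$ near $0$ together with the singular expansion of Lemma~\ref{lem:maximalSPnetworks} near $R_T$; this determines $T^s_0 \approx 0.00290$ and, via the matched $X^0$ relation, $T^s_2 \approx -0.00669$.
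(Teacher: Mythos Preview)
Your argument is correct and takes a genuinely different route from the paper. The paper avoids integration altogether by applying the Dissymmetry Theorem for tree-decomposable classes: it introduces the edge--triangle incidence tree (the $\Delta e$-tree) of a 2-tree, proves it is indeed a tree, and uses the resulting bijection $\mathcal{T}^s \simeq \mathcal{T}^s_\circ - \mathcal{T}^s_{\circ-\circ}$ to express $T^s(x,y)$ as an explicit polynomial in $x$, $D^{\overline r}(x,y)$, and $D^{r}(x,y)$. All singular coefficients, including $T^s_0$ and $T^s_2$, then follow by direct substitution of the local expansions from Lemma~\ref{lem:maximalSPnetworks}, with no global data needed.

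Your approach instead exploits the rigid relation $m = 2n-3$ in 2-trees to convert edge-pointing into the Euler operator $2x\partial_x - 3$, reducing the problem to a first-order linear ODE in $x$. This is more elementary in that it bypasses the Dissymmetry machinery entirely, and your derivation of $T^s_1 = 0$ and $T^s_3 = -(R_T^2/6)F_1$ from the local ODE balance is clean. The trade-off is exactly the one you identify: the ODE only fixes one linear relation between $T^s_0$ and $T^s_2$ from local singular data, and the remaining constant of integration must be supplied by the global condition $h(0)=0$, i.e.\ by numerically evaluating $\int_0^{R_T} u^{-1/2}F(u)\,du$. The paper's closed-form expression sidesteps this numerical step, which is the main practical advantage of the Dissymmetry route here.
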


\begin{proof}
We define the \emph{$\De$-tree} $\overline\tau(G)$ of a graph $G$ as the bipartite graph describing the incidences between edges and triangles of $G$. More precisely, the node set of $\overline\tau(G)$ is given by $E(G) \cup \big\{\{x_1,x_2,x_3\}: \{x_i, x_j\} \in E(G) \text{ for each } i\neq j\in \{1,2,3\}\big\}$ and two nodes $e$ and $\Delta$ are neighbours if and only if $e\subsetneq \Delta$ in $G$.

Since we are dealing with graphs carrying a spanning tree we need to encode the information on the distinguished spanning tree also in the associated $\De$-tree. For this reason, we define five different types of nodes of $\overline\tau(G)$ if $G \in \cTs$. Let $V_{\overline{e}}$ and $V_{e}$ denote the set of vertices of $\overline\tau(G)$ associated with edges of $G$ that are, respectively are not contained in the distinguished spanning tree of $G$.  Moreover, for $i\in \{1,2,3\}$, let $V_{\Delta_i}$ denote the sets of vertices of $\overline\tau(G)$ associated with triangles of $G$ which contain exactly $3-i$ edges that are in the distinguished spanning tree.

Observe that for every $G\in \cTs$ the connected $\De$-tree $\overline\tau(G)$ associated with it is uniquely defined. We claim that $\overline\tau(G)$ is a tree. Indeed, assume for a contradiction that there exists a cycle $C= \Delta^{1}e^{1}\Delta^{2}\ldots  \Delta^{k}e^{k}\Delta^{1}$ in $\overline\tau(G)$. Let $G'$ be the induced subgraph of $G$ on the vertex set $\bigcup_{i\in[k]} \Delta^{i}$. In particular, being edge-maximal and $K_4$-minor free, $G'$ is also a 2-tree. However, $G'$ does not contain a vertex of degree 2, a contradiction. As a consequence, $\overline\tau(G)$ is indeed a tree. This means that $\cTs$ is a tree-decomposable class and therefore we can apply Theorem~\ref{thm:dissymmetry}. Since $\De$-trees are bipartite, the equation in Theorem~\ref{thm:dissymmetry} simplifies to $\cTs \simeq \cTs_{\circ} - \cTs_{\circ - \circ}$. The class $\cTs_{\circ}$ is naturally partitioned into the five classes $\cTs_{\overline e}$, $\cTs_{e}$, $\cTs_{\Delta_1}$, $\cTs_{\Delta_2}$, and $\cTs_{\Delta_3}$ depending on whether the distinguished node is from the set $V_{\overline e}$, $V_e$, $V_{\Delta_1}$, $V_{\Delta_2}$, or $V_{\Delta_3}$. The class $\cTs_{\circ-\circ}$ is partitioned into the classes $\cTs_{\overline e - \Delta_1}$, $\cTs_{\overline e - \Delta_2}$, $\cTs_{e-\Delta_1}$, $\cTs_{e-\Delta_2}$, and $\cTs_{e-\Delta_3}$ by the structure of $\overline\tau(G)$. In terms of their associated generating functions these facts translate into
\begin{equation}
\label{eq:Ts}
T^s = T^s_{\overline e} + T^s_{e} + T^s_{\Delta_1} + T^s_{\Delta_2} + T^s_{\Delta_3} - T^s_{\overline e - \Delta_1} -T^s_{\overline e - \Delta_2} -T^s_{e- \Delta_1}-T_{e-\Delta_2}-T^s_{e-\Delta_3}.
\end{equation}
 Using the Symbolic Method, it is not difficult to check that the following equations are true. Recall that we have $D^{\overline r}(x,y) = D^\circ(x,y)$.

$$T^s_{\overline e}(x,y) = \frac{x^2}{2} D^{\overline r}(x,y), \qquad T_{e}^s= \frac{x^2}{2} D^r(x,y), \qquad T_{\Delta_1}^s(x,y) = \frac{x^3}{2} \left(D^{\overline r}(x,y)\right)^3,$$
$$T^s_{\Delta_2}(x,y)  = x^3 \left(D^{\overline r}(x,y)\right)^2 D^r(x,y), \qquad
T^s_{\Delta_3}(x,y)  = \frac{x^3}{2} D^{\overline r}(x,y) \left(D^r(x,y)\right)^2$$
and
$$T^s_{\overline e - \Delta_1}(x,y) = x^3\left(D^{\overline r}(x,y)\right)^3 , \qquad T_{\overline e - \Delta_2}^s(x,y) = x^3\left(D^{\overline r}(x,y)\right)^2D^r(x,y),$$
$$T^s_{e- \Delta_1}(x,y) = \frac{1}{2}x^3\left(D^{\overline r}(x,y)\right)^3, \qquad T_{e-\Delta_2}^s(x,y) = 2 x^3 \left(D^{\overline r}(x,y)\right)^2 D^r(x,y),$$
$$T^s_{e-\Delta_3}(x,y) = \frac{3}{2} x^3 D^{\overline r}(x,y) \left(D^{r}(x,y)\right)^2.$$

In particular, $T^s(x,y)$ can be expressed in terms of $x$, $D^{\overline r}(x,y)$, and $D^{r}(x,y)$ by plugging the previous equations in Equation~\eqref{eq:Ts}. One can therefore easily verify that the dominant singularity of $T^s(x,1)$ is the same as the coinciding one of $D^{\overline r}(x,1)$ and $D^r(x,y)$, namely $R_T$.  Finally, we obtain the singular expansion of $T^s(x,1)$ in a domain dented at $R_T$ by using Equation~\eqref{eq:Ts} and the singular expansions of $D^{\overline r}(x,1)$ and $D^r(x,1)$ from Lemma~\ref{lem:maximalSPnetworks}.
\end{proof}

Now we have all ingredients that we need to finally prove Theorem~\ref{thm:2trees}.

\begin{proof}[Proof of Theorem~\ref{thm:2trees}]
Let $U_n$ denote the number of spanning trees in a random 2-tree on $n$ vertices. Then, it holds that
\begin{equation}
\label{eq:2trees}
\mathbb{E}[U_n]=\sum_{G\in \mathcal{T}_n} s(G) \mathbb{P}[G]=\frac{\sum_{G\in \mathcal{T}_n}s(G)}{|\mathcal{T}_n|}=\frac{|\mathcal{T}^s_n|}{|\mathcal{T}_n|}=\frac{[x^n]T^s(x,1)}{|\mathcal{T}_n|},
\end{equation}
where $\cT_n$ and $\cT^s_n$ denote the set of 2-trees on $n$ vertices and the set of 2-trees on $n$ vertices carrying a distinguished spanning tree, respectively, and $s(G)$ denotes the number of spanning trees in a graph $G$.
By Lemma~\ref{lem:2trees} and Theorem~\ref{thm:transfer} we get that the number of 2-trees on $n$ vertices that carry a distinguished spanning tree is asymptotically equal to $\frac{T^s_3}{\Gamma(-3/2)}n^{-5/2} R_T^{-n} n!$.
Furthermore, it follows from Lemma~\ref{lem:labelled2trees} and Theorem~\ref{thm:transfer} that the number of 2-trees on $n$ vertices is asymptotically equal to $\frac{\sqrt{2} e^{-3/2}}{48 \Gamma(-3/2)}n^{-5/2} \varrho_T^{-n} n!$. Dividing the former by the latter as in Equation~\eqref{eq:2trees}, we obtain that the expected value of $U_n$ is asymptotically equal to $s_2 \varrho_2^{-n}$, where $s_2\approx 0.14307$ and $\varrho_2^{-1} \approx 2.55561$.
\end{proof}




\section{Spanning trees in series-parallel graphs with fixed excess}
\label{sec:fixedexcess}

Recall that the excess of a graph $G$ is the number of its edges minus the number of its vertices. In this section we address the following question: given a fixed integer $k$ (not depending on $n$), what is the expected number of spanning trees in a random connected SP graph with $n$ vertices and excess equal to $k$ if $n$ is large? In order to study this question we exploit the structure of graphs with fixed excess introduced by Wright \cite{Wright1977, Wright1978}. The structure of graphs with fixed excess has been applied successfully in a wide variety of situations (see e.g.~\cite{Bernardi2012, Chapuy2010,NoyRamos2014,Rue2013}).

We write $\overline{C}_k(x)$ for the EGF associated with connected SP graphs with excess equal to $k$ and with a distinguished spanning tree. We denote by $C_k(x)$ the EGF of connected SP graphs with fixed excess equal to $k$. We also use the EGF for rooted labelled trees, that we denote by $W(x)$. We recall that $W(x)$ satisfies the functional equation $W(x)=x \exp(W(x))$. Concerning the singular expansion, $W(x)$ has a unique square-root type singularity at $x=e^{-1}$, and in a domain dented at this point $W(x)$ has an expansion of the form
\begin{equation*}
W(x)= 1- \sqrt{2}Z+ \bigO(Z^2),
\end{equation*}
where $Z=\sqrt{1-ex}$.

Let $G$ be a graph with excess $k$.
Starting from $G$, we build a multigraph that we call the \emph{kernel} of $G$ in the following way:
we start deleting recursively vertices of degree one, thus obtaining the \emph{core} of the graph (see for instance~\cite{NoyRamos2014}).
Then we continue by dissolving vertices of degree two, i.e.~replacing the two edges incident to a vertex of degree two by a single edge.
The resulting connected multigraph $K(G)$ (the \emph{kernel} of $G$) has minimum degree greater or equal than $3$ and fixed excess equal to $k$.
The vertices of the kernel of $G$ can be also labelled in the following way: the surviving  $|K(G)|$ vertices in $K(G)$ carry labels from $1$ to $n$ (if $|G|=n$). The labels induce then an order of the vertices.
The final labels in $[1,|K(G)|]$ of the kernel are the ones induced by these ordering of the primitive labels.

If $G$ has a distinguished spanning tree, the construction of the kernel of $G$ induces also a spanning tree in $K(G)$.
Indeed, observe that the edges in $G$ belonging to this distinguished structure induce a spanning tree in its core.
Then, the distinguished spanning tree of the kernel is induced by the spanning tree in the corresponding core.
In particular, possible loops do not belong to the induced spanning tree of the kernel.
Additionally, if there is a multiple edge, at most one of the copies belongs to the spanning tree.
See an example of the construction of the kernel in Figure~\ref{fig:fixed-excess} together with the distinguished spanning structure.
\begin{figure}[htb]
\begin{center}
\includegraphics[width=14.8 cm, height=3.5cm]{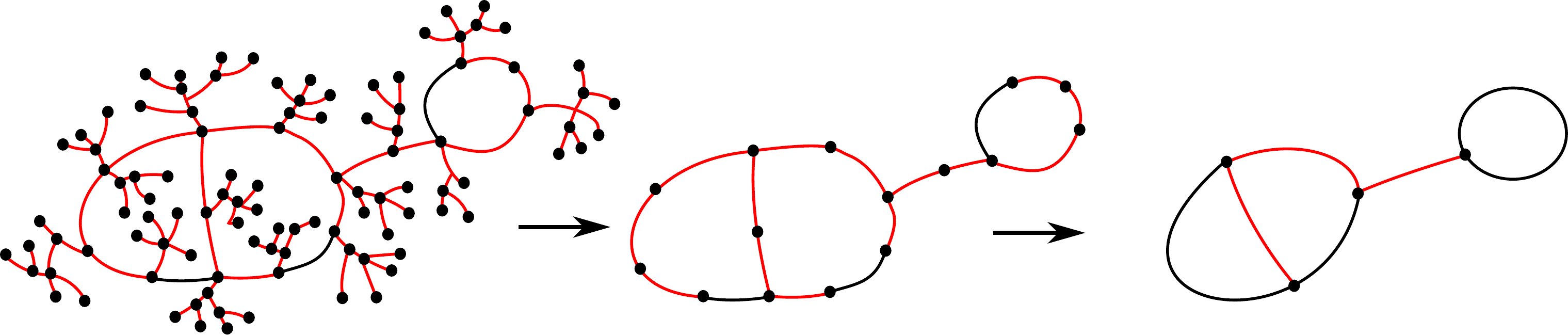}
\caption{Construction of the kernel of a graph.}
\label{fig:fixed-excess}
\end{center}
\end{figure}

It is straightforward how to reverse this construction: starting from a given kernel $C$ we build directly the initial graph by pasting over each vertex of $C$ a rooted labelled tree and substituting each edge by a sequence of rooted labelled trees.
In our approach, dealing with spanning trees, we need to consider a slight modification for edges not belonging to the spanning tree.
More precisely, we need to substitute in this case each edge (neither a loop nor a multiple edge) by
\begin{equation*}
1+2W(x)+3W(x)^2+\dots= \sum_{r\geq 0} (r+1) W(x)^r=\frac{1}{(1-W(x))^2},
\end{equation*}
whose singular expansion in a domain dented at $x=e^{-1}$ is of the form $\frac{1}{2} Z^{-2}+\bigO(Z^{-1})$, where again $Z=\sqrt{1-ex}$.
The reason for this is that whenever we paste a sequence of rooted trees over the edge under consideration we must maintain the spanning structure acyclic and connected. If in the sequence we paste $r$ different trees, the edge becomes subdivided into $r+1$ edges.
We need to choose then which one of these $r+1$ edges is not in the spanning tree.

In the case of the loop the previous sequence must start at the term $2W(x)$ in order to obtain at the end a simple graph.
Similarly, for multiple edges we need to assure that at the end we finally obtain a simple graph.
This case-by-case analysis suggests that generating functions would be somehow involved (see the similar problem in general graphs in \cite{Wright1977, Wright1978}).
However, we show that we are able to find closed formulas for the asymptotic estimates by means of a sequence of reductions.

In the problem under study, our graph $G$ is a connected SP graph, hence its kernel $K(G)$ is a SP multigraph, i.e.~a $K_4$-minor free multigraph.
It is also obvious that the kernel is a planar multigraph. Hence, by Euler's relation, the number of possible kernels of excess $k$ is finite.
Observe that for a fixed value of $k$, a multigraph $K$ of excess $k$ maximizes its number of edges if and only if $K$ is cubic (see the same argument in Lemma 4.3.1 of \cite{Rue2013} for maps on surfaces).
Hence, we can restrict ourselves to the study of connected SP graphs arising from a cubic kernel as these ones will provide the main contribution to the asymptotic.

For technical reasons we will deal with weighted cubic multigraphs: the \emph{weight} (called the \emph{compensation factor} in \cite{Janson1993}, see also \cite{Kang2012}), has the following meaning: when substituting edges of the kernel by sequences of rooted trees, a loop has two possible orientations
that give the same simple graph.
A double (triple) edge can be permuted in two (six) ways and still produce the same object.
The enumerative study of weighted cubic multigraphs will be postponed until the end of this section.
We denote by $g_k$ the number of weighted cubic SP multigraphs with excess $k$. Additionally, we denote by $\overline{g}_k$  the number of weighted cubic SP multigraphs with excess $k$ that carry a distinguished spanning tree.
In the rest of this section, all cubic multigraphs are considered to be weighted. 

Observe that a cubic multigraph of excess $k$ has $2k$ vertices and $3k$ edges. Therefore, any spanning tree has $2k-1$ edges and hence $k+1$ edges are \emph{not} used in the spanning structure. The first lemma gives a lower bound for $[x^n]\overline{C}_k(x)$:

\begin{lemma}\label{lemma:fixedexcess}
For fixed $k>1$, the following inequality holds:
\begin{equation} \label{eq:low-bound}
\overline{g}(k)[x^n] \frac{(3-2W(x))^{k+1} W(x)^{6k}}{(1-W(x))^{4k+1}}   < [x^n] \overline{C}_k(x).
\end{equation}
\end{lemma}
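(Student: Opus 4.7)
The strategy will be to produce an injection from pairs $(K,T)$---where $K$ is a weighted cubic SP multigraph of excess $k$ and $T$ is a distinguished spanning tree of $K$, there being $\overline{g}(k)$ such pairs---together with appropriate substitution data, into $\overline{\mathcal{C}}_k$, and to read off the generating function of the image. A cubic SP multigraph of excess $k$ has $2k$ vertices and $3k$ edges, so $T$ contains $2k-1$ edges and leaves $k+1$ non-tree edges.

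Starting from $(K,T)$ I would reconstruct a graph $G\in\overline{\mathcal{C}}_k$ by (i) attaching, at each half-edge of $K$, a rooted labelled tree, and (ii) subdividing each edge of $K$ by a possibly empty sequence of degree-two vertices each carrying its own rooted labelled tree. Together these invert the core-reduction and the dissolving of degree-two vertices that produce the kernel. By the Symbolic Method, step (i) contributes $W(x)^{6k}$ over the $6k$ half-edges. In step (ii), each edge of $T$ contributes $\sum_{r\geq 0} W(x)^r = (1-W(x))^{-1}$, since every subdivided edge must lie in the reconstructed spanning tree, while each non-tree edge contributes $(3-2W(x))/(1-W(x))^2$, obtained by choosing which of the $r+1$ subdivided edges is the unique non-tree edge together with the extra freedom that comes from loops and double edges in $K$ (matching the compensation weights built into $\overline{g}(k)$, as reflected in the expansion $(3-2W)/(1-W)^2 = (1-W)^{-2}+2(1-W)^{-1}$). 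Multiplying these factors over the $2k-1$ edges of $T$ and the $k+1$ non-tree edges yields
$$\overline{g}(k)\, W(x)^{6k} \cdot (1-W(x))^{-(2k-1)} \cdot \bigl((3-2W(x))(1-W(x))^{-2}\bigr)^{k+1} = \overline{g}(k)\,\frac{(3-2W(x))^{k+1}\,W(x)^{6k}}{(1-W(x))^{4k+1}}.$$

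To conclude, I would argue that this reconstruction is injective: the kernel $K$ and its induced spanning tree $T$ can be recovered from any graph in the image by performing core-reduction and dissolving the resulting degree-two vertices, and the substitution data (the attached rooted trees and the locations of non-tree edges inside each subdivision) is encoded in $G$. The construction plainly lands in $\overline{\mathcal{C}}_k$, so extracting the $n$-th coefficient gives the stated lower bound. The inequality is strict because $\overline{\mathcal{C}}_k$ also contains many graphs whose kernel is a non-cubic SP multigraph of excess $k$, none of which appear in the image. The main technical obstacle I anticipate is the careful bookkeeping behind the factor $(3-2W(x))/(1-W(x))^2$ on non-tree edges: one must check case by case how loops and pairs of parallel edges in $K$ interact with the simplicity constraint on the reconstructed graph, and verify that the resulting enumeration meshes precisely with the compensation factors used to define $\overline{g}(k)$.
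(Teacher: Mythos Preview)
Your overall strategy --- build an injection from weighted cubic kernels with a spanning tree, plus substitution data, into $\overline{\mathcal{C}}_k$ --- is exactly the paper's. But the bookkeeping you propose does not correspond to a well-defined injective construction, and the ``technical obstacle'' you flag is a genuine gap, not a detail. The paper does \emph{not} attach trees at half-edges: it pastes one rooted tree at each of the $2k$ vertices (factor $W^{2k}$), then over every tree edge a sequence of \emph{at least one} rooted tree (factor $(W/(1-W))^{2k-1}$), and over every non-tree edge a sequence of \emph{at least two} rooted trees together with a choice of which resulting subdivided edge stays outside the spanning tree, i.e.
\[
\sum_{r\ge 2}(r+1)W^r=\frac{(3-2W)\,W^2}{(1-W)^2}
\]
per non-tree edge. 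The ``at least one/two'' constraints are the whole point: they force the output to be simple regardless of loops or multiple edges in the kernel, which is precisely what makes the map land in $\overline{\mathcal{C}}_k$ and what makes the bound merely a lower bound.

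Your alternative reading goes wrong in two places. First, ``a rooted tree at each half-edge'' does not give a consistent relabelling of the $2k$ kernel vertices, so $W^{6k}$ has no clean interpretation on its own; in the paper the extra powers of $W$ come from the non-empty sequence constraints on edges, not from half-edges. Second, $(3-2W)/(1-W)^2=\sum_{r\ge 0}(r+3)W^r$ is \emph{not} ``$(r+1)$ choices of non-tree edge plus compensation freedom'': the compensation factors in $\overline{g}(k)$ are \emph{divisors} $2^{-l_1-l_2}6^{-l_3}$ accounting for symmetries, not additional options in the substitution. Because your version allows length-zero subdivisions, a loop or a pair of parallel edges in the kernel can survive unaltered and produce a non-simple graph, so the image is not contained in $\overline{\mathcal{C}}_k$ and the inequality does not follow.
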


\begin{proof}
The proof is reminiscent to the proof of Lemma 3 in~\cite{Noy2015}. The EGF on the left hand side of Equation~\eqref{eq:low-bound} can be written in the following way:
$$W(x)^{2k}\frac{W(x)^{2k-1}}{(1-W(x))^{2k-1}} \cdot \frac{(3-2W(x))^{k+1}W(x)^{2k+2}}{(1-W(x))^{2k+2}}.$$
This can be interpreted as follows: for a given cubic SP multigraph with a distinguished spanning tree we
\begin{enumerate}
\item[(a)] paste a rooted labelled tree over each of the $2k$ vertices,
\item[(b)] paste a sequence of at least one rooted labelled trees over each of the $2k-1$ edges belonging to the spanning tree of the kernel.
\item[(c)] paste a sequence of at least one rooted labelled trees over each of the $2k+2$ edges not belonging to the spanning tree of the kernel, and then decide which of the new edges does not belong to the resulting spanning tree.
\end{enumerate}
Points~(a) and~(b) contribute with terms $W(x)^{2k}$ and $(W(x)/(1-W(x)))^{2k-1}$, respectively, where the second term is associated with a sequences of at least one rooted labelled tree.
For the computation of Point~(c), recall that loops are not in spanning trees.
Hence, to guarantee that the final graph is simple we need sequences of length at
least two for edges which do not belong to the spanning tree (length one is enough for multiple edges, but length two is needed for loops).
Hence, the computation in point (c) arise from the fact that
$$\sum_{r\geq 2}(r+1)W(x)^{r}=\frac{(3-2W(x))W(x)^2}{(1-W(x))^{2}}.$$
This construction is injective and does \emph{not} give all possible connected SP graphs with excess $k$. Summing all over all possible weighted cubic SP multigraphs we obtain the result as claimed.
\end{proof}

The next step now is to get an upper bound for $[x^n]\overline{C}_k(x)$. This is provided by the following lemma:

\begin{lemma}\label{lemma:fixedexcess2}
For fixed $k>1$, the following inequality holds:
\begin{equation} \label{eq:upper-bound}
[x^n] \overline{C}_k(x) < \overline{g}(k)\left([x^n] \frac{W(x)^{2k}}{(1-W(x))^{4k+1}}\right) (1+o(1)).
\end{equation}
\end{lemma}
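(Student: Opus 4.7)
The plan is to adapt the inverse-construction used in Lemma~\ref{lemma:fixedexcess}, but replace its strict simplicity requirement (sequences of length $r\geq 2$ on non-tree edges) by \emph{unrestricted} sequences of rooted labelled trees. This relaxation naturally overcounts the preimages of each kernel (the extras being non-simple graphs), and hence yields an upper bound. Additionally, I would sum this upper bound over \emph{all} kernel types, not just cubic ones, and argue that the non-cubic contributions are absorbed in the $(1+o(1))$ factor.

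Concretely, given a connected SP graph $G$ with excess $k$ carrying a distinguished spanning tree, its kernel $K(G)$---together with the induced spanning tree on $K(G)$---is uniquely determined, and the reverse construction consists of pasting a rooted labelled tree at each vertex of $K(G)$, subdividing each tree edge while pasting rooted labelled trees at the new subdivision vertices, and doing the same on each non-tree edge while additionally choosing which of the $r+1$ resulting edges is \emph{not} in the spanning tree. Dropping the simplicity constraint, the EGF contribution per tree edge becomes $\sum_{r\geq 0}W(x)^r=(1-W(x))^{-1}$, and per non-tree edge $\sum_{r\geq 0}(r+1)W(x)^r=(1-W(x))^{-2}$. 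Hence for a kernel on $v$ vertices (with $v-1$ tree edges and $k+1$ non-tree edges) the number of preimages is bounded above by the coefficient of $x^n$ in $W(x)^v/(1-W(x))^{v+2k+1}$. Summing over the finitely many labelled-weighted kernel classes (finite since minimum degree $\geq 3$ and excess $k$ force $v\leq 2k$ via Euler's relation) and separating cubic ($v=2k$) from non-cubic ($v<2k$) kernels gives
\begin{equation*}
[x^n]\overline{C}_k(x)\;\leq\;\overline g(k)\,[x^n]\frac{W(x)^{2k}}{(1-W(x))^{4k+1}}+\sum_{v<2k}h_v\,[x^n]\frac{W(x)^{v}}{(1-W(x))^{v+2k+1}},
\end{equation*}
where $h_v$ denotes the total weighted count of kernels on $v$ vertices with a distinguished spanning tree.

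Finally, the singular expansion $1-W(x)\sim \sqrt{2}\sqrt{1-ex}$ as $x\to 1/e$, together with the Transfer Theorem (Theorem~\ref{thm:transfer}), yields $[x^n]W(x)^v/(1-W(x))^{v+2k+1}=\Theta\!\left(n^{(v+2k-1)/2}e^{n}\right)$. The cubic term achieves the maximal singular exponent $(4k-1)/2$, whereas every non-cubic term has exponent at most $(4k-2)/2$, and is therefore polynomially smaller by a factor of order at least $n^{-1/2}$. Since the set of non-cubic kernel classes is finite and independent of $n$, the entire non-cubic sum is absorbed into a $(1+o(1))$ factor, yielding the claim. The main obstacle I anticipate is making the inverse-construction entirely rigorous at the level of labelled EGFs: one must verify that dropping the simplicity constraint genuinely produces an upper bound, i.e.,~that the relaxed reverse-construction map defines a surjection from decorated kernels onto connected SP graphs of excess $k$ with a distinguished spanning tree, and that the weights $\overline g(k)$ and $h_v$ are precisely the compensation factors needed to count each target graph at least once via the EGF product.
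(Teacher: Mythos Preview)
Your proposal is correct and follows essentially the same approach as the paper: drop the simplicity constraint on the pasted sequences so that the reverse construction overcounts (producing some non-simple graphs), and absorb the contribution of non-cubic kernels into the $(1+o(1))$ factor. The paper's own proof is considerably terser but makes exactly these two moves; your explicit singular-analysis argument that non-cubic kernels contribute a polynomially smaller term (via the exponent $(v+2k-1)/2<(4k-1)/2$ when $v<2k$) is the justification the paper leaves implicit.
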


\begin{proof}
The statement is proved by applying a similar argument to the one of Lemma \ref{lemma:fixedexcess}.
For a fixed cubic multigraph with a distinguished spanning tree, we now paste over each edge an arbitrary sequence of rooted trees (possibly empty), and taking care of the special requirement on edges of the kernel such that do not belong to the initial spanning tree.
In this way we generate \emph{all} SP graphs carrying a spanning tree with excess $k$.
Observe that it is possible to generate graphs which are not simple.
Hence, this construction only gives an upper bound.
Finally, the term $o(1)$ in Equation~\eqref{eq:upper-bound} arises from the set of kernels that are not cubic.
\end{proof}

We can now combine both lemmas to get an asymptotic estimate for $[x_n] \overline{C}_k(x)$:

\begin{proposition}\label{prop:estimate}
For fixed $k>1$, the following asymptotic estimate in $n$ holds:

\begin{equation} \label{eq:estimate}
[x^n] \overline{C}_k(x) = \overline{g}_k  \frac{1}{2^{2k+1/2}} \frac{n^{2k-1/2}}{\Gamma(2k+1/2)}  e^n (1+o(1)).
\end{equation}
\end{proposition}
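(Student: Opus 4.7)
The plan is to squeeze $[x^n]\overline{C}_k(x)$ between the asymptotics of the lower and upper bounds from Lemmas~\ref{lemma:fixedexcess} and~\ref{lemma:fixedexcess2} and show that both coincide to leading order. Since both bounds are analytic transformations of $W(x)$, the natural approach is singularity analysis via the Transfer Theorem (Theorem~\ref{thm:transfer}), performed around the well-known square-root singularity $\tau = e^{-1}$ of $W(x)$, at which $W(\tau)=1$.

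First I would identify the dominant singularity of both bounds. The factors $W(x)^{2k}$, $W(x)^{6k}$ and $(3-2W(x))^{k+1}$ are analytic at $\tau$ and tend there to finite nonzero limits (namely $1$, $1$ and $(3-2)^{k+1}=1$), so in both expressions the divergence is driven entirely by $(1-W(x))^{-(4k+1)}$, forcing the dominant singularity to be exactly $\tau = e^{-1}$. Using the known singular expansion $W(x) = 1 - \sqrt{2}\,Z + \bigO(Z^2)$ with $Z = \sqrt{1-ex}$, I would derive
\begin{equation*}
(1-W(x))^{-(4k+1)} = 2^{-(4k+1)/2}(1-ex)^{-(2k+1/2)}\bigl(1+\bigO(Z)\bigr)
\end{equation*}
and conclude that both bounds admit a singular expansion of the form
\begin{equation*}
\overline{g}_k \cdot 2^{-(2k+1/2)}\,(1-ex)^{-(2k+1/2)}\bigl(1+o(1)\bigr)
\end{equation*}
as $x \to e^{-1}$ in a dented domain, since the auxiliary factors contribute the same limit $1$ on both sides and only a relatively smaller perturbation of order $\bigO(Z)$.

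With the singular exponent $\alpha = 2k + 1/2 \notin \mathbb{Z}^{-}$, I would then apply Theorem~\ref{thm:transfer} to both the lower and the upper bound, obtaining in each case
\begin{equation*}
\overline{g}_k \cdot \frac{1}{2^{2k+1/2}} \cdot \frac{n^{2k-1/2}}{\Gamma(2k+1/2)} e^{n}(1+o(1)).
\end{equation*}
Squeezing $[x^n]\overline{C}_k(x)$ between these two equal asymptotics yields the claimed estimate~\eqref{eq:estimate}.

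The main obstacle I anticipate is not in the asymptotic computation itself (which reduces to inserting the expansion of $W(x)$ into two explicit rational functions of $W$) but in justifying the use of Theorem~\ref{thm:transfer}: one must verify that the relevant expressions are analytic in an appropriate dented domain $\Delta_{e^{-1}}(\phi,R)$ and that $e^{-1}$ is the unique dominant singularity. Analyticity follows because $W(x)$ is analytic in such a dented domain and both bounds are compositions of $W(x)$ with rational functions whose poles lie at $W=1$, which corresponds precisely to $x=e^{-1}$. Uniqueness of the dominant singularity is inherited from the aperiodicity of $W(x)$. Once these technicalities are in place, the Transfer Theorem applies term by term and delivers the announced formula.
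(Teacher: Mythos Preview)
Your proposal is correct and follows essentially the same approach as the paper: apply the Transfer Theorem to both the lower bound of Lemma~\ref{lemma:fixedexcess} and the upper bound of Lemma~\ref{lemma:fixedexcess2}, using that $W(e^{-1})=1$ so the auxiliary factors contribute $1$ at the singularity and the leading singular term is $2^{-(2k+1/2)}(1-ex)^{-(2k+1/2)}$ in both cases, then squeeze. Your write-up is in fact slightly more explicit than the paper's about verifying analyticity in a dented domain and uniqueness of the dominant singularity.
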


\begin{proof} We apply the Transfer Theorem for singularity analysis (Theorem~\ref{thm:transfer}) to Equations ~\eqref{eq:low-bound} and~\eqref{eq:upper-bound}. In particular we get that

$$[x^n] \overline{C}_k(x) >\overline{g}_k[x^n] \frac{(3-2W(x))^{k+1} W(x)^{6k}}{(1-W(x))^{4k+1}} = \overline{g}_k \frac{1}{2^{2k+1/2}} \frac{n^{2k-1/2}}{\Gamma(2k+1/2)} e^n  (1+o(1)),$$
and concerning the upper bound,
$$[x^n] \overline{C}_k(x) < \overline{g}_k\left([x^n] \frac{W(x)^{2k}}{(1-W(x))^{4k+1}}\right) (1+o(1))= \overline{g}_k \frac{1}{2^{2k+1/2}} \frac{n^{2k-1/2}}{\Gamma(2k+1/2)} e^n (1+o(1)).$$

In both estimates we have exploited the fact that $W(e)=1$.
As these estimates have the same singular behaviour we conclude the estimate in Equation~\eqref{eq:estimate}.
\end{proof}

Before moving to the computation of $\overline{g}_k$ and $g_k$, let us mention that similar arguments give estimates for $[x^n]C_k(x)$. Indeed, using the argument of Lemma 3 in \cite{Noy2015} (or \emph{mutatis mutandis} the previous arguments for $\overline{C}_k(x)$) one gets the following upper and lower bounds for $[x^n]C_k(x)$:
$$ g_k [x^n] \frac{W(x)^{8k}}{(1-W(x))^{3k}} <[x^n] C_k(x) < g_k\left([x^n] \frac{W(x)^{2k}}{(1-W(x)
)^{3k}}\right) (1+o(1)).$$
Again, by applying the Transfer Theorem for singularity analysis (Theorem~\ref{thm:transfer}) we get the estimate
\begin{equation}\label{eq:estimate2}
[x^n]C_k(x)= g_k\frac{1}{2^{3k/2}} \frac{n^{3k/2-1}}{\Gamma(3k/2)} e^n  (1+o(1)).
\end{equation}

\subsection*{Enumeration of cubic cores with spanning structures and proof of Theorem~\ref{thm:excess}} \label{sec: cubic-cores}

We complete the picture by getting asymptotic formulas (in $k$) for both $g_k$ and $\overline{g}_k$.
In this section we get asymptotic estimates when $k$ is large enough.
The main results we implicitly use are the Transfer Theorems, joint with the fact that singular expansions can be integrated on dented domains (see for instance Theorem VI.9 in \cite{flajolet2009analytic}).
Additionally, given a value $k$, one can obtain the corresponding values by getting the Taylor expansions of the generating functions that will be introduced in the next lines.

Let us start studying $g_k$. As mentioned before in this section, $g_k$ is the number of connected cubic SP multigraphs with weights and excess equal to $k$. The weights are defined rigourously in the following way: given a multigraph with $l_1$ loops, $l_2$ double edges and $l_3$ triple edges, its \emph{weight} is $2^{-l_1-l_2} 6^{-l_3}$. Weights are needed to encode edge symmetries of multigraphs.
Let $G(u)$ be the EGF of connected weighted cubic SP multigraphs, where the variable $u$ marks the excess. $G(u)$ satisfies the following system of functional equations:
\begin{eqnarray}\label{eq:system-planar}
6u\frac{d G(u)}{du}&=&  d(u)+c(u),\nonumber\\
b(u)                 &=& \frac{u}{2}(d(u)+c(u))+\frac{u}{2},\\
c(u)                 &=& s(u)+p(u)+b(u), \nonumber\\
d(u)                 &=& \frac{b(u)^2}{u},\nonumber\\
s(u)                 &=& c(u)^2-c(u)s(u),\nonumber\\
p(u)                 &=& uc(u)+\frac{1}{2}uc(u)^2+\frac{u}{2}.\nonumber
\end{eqnarray}

Full details concerning these equations can be found in Section 3 of~\cite{Noy2015}, building on results on~\cite{Kang2012} (see also \cite{BodirskyKang2007}). The meaning of these EGF is the following: the term  $6u\frac{d G(u)}{du}$ corresponds to the EGF of connected weighted cubic SP multigraphs where an edge (the \emph{root edge}) is marked and oriented (remember that a cubic multigraph of excess $k$ has $3k$ edges and each edge has 2 possible orientations). We have the following cases depending on the nature of the root edge: either the root edge is a loop (term $b(u)$) or a bridge (term $d(u)$) or when deleting it we get a series construction (term $s(u)$) or a parallel construction (term $p(u)$). The term $c(u)$ plays the role of an auxiliary EGF.

We proceed now to analyse this system.

\begin{proposition}\label{prop:asympt_g_k}
The number $g_k= [u^k]G(u)$ of weighted connected cubic SP multigraphs with fixed excess equal to $k$ has the following asymptotic estimate:
\begin{equation*}
g_k = c k^{-5/2} \gamma^{-k} (1+o(1)),
\end{equation*}
where $\gamma= \frac{4}{27}\sqrt{6\sqrt{3}-9} \approx 0.17481$ and $c \approx 0.06034$.
\end{proposition}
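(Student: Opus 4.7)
The plan is to apply singularity analysis directly to the algebraic system (5.6). First I would reduce the system to a single implicit equation for $c(u)$: the last two equations give $s(u)=c(u)^2/(1+c(u))$ and $p(u)=\frac{u}{2}(1+c(u))^2$; eliminating $d(u)$ between equations two and four yields the quadratic relation $b(u)^2-2b(u)+u(1+c(u))=0$, while the third equation rewrites as $b(u)=c(u)/(1+c(u))-\frac{u}{2}(1+c(u))^2$. Eliminating $b(u)$ between these last two and substituting $v=1+c$ produces the clean polynomial relation
$$u^2 v^6 + 8u v^3 - 4(v^2-1) = 0,$$
which expresses $u$ as an explicit algebraic function of $v$ (equivalently of $c$).

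Next I would locate the dominant singularity of $c(u)$ through the standard branch-point condition $du/dv=0$. A short computation with the above polynomial shows that this is equivalent to $2v^2+9=6\sqrt{v^2+3}$, whose solution is $v^4=27/4$. Plugging this back gives a singularity $u=\gamma$ which, after simplification, agrees with the closed form $\gamma=\frac{4}{27}\sqrt{6\sqrt{3}-9}$. A routine check that the second partial derivative is nonzero there, together with an aperiodicity argument of the type used in Section~\ref{sec:analytic}, ensures that $c(u)$ admits a unique square-root type singular expansion
$$c(u) = c_0 + c_1 U + c_2 U^2 + c_3 U^3 + \mathcal{O}(U^4), \qquad U = \sqrt{1-u/\gamma},$$
with explicitly computable coefficients. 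Since $b(u)$, $d(u)$, $s(u)$, $p(u)$ are analytic transformations of $c(u)$ away from $c=-1$, they all inherit the same singular behaviour in a dented domain at $\gamma$.

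Finally, I would transfer this expansion to $G(u)$ via the first equation of the system, $6uG'(u)=d(u)+c(u)$. Dividing by $6u$ (analytic and nonzero at $\gamma$) and integrating termwise in a dented domain, which is justified by Theorem~VI.9 of~\cite{flajolet2009analytic}, upgrades the leading $U$-singularity of $G'$ into a $U^3$-singularity of $G$. Thus $G(u)=G_0+G_2U^2+G_3U^3+\mathcal{O}(U^4)$ with $G_3$ an explicit constant in terms of the $c_i$'s and $\gamma$, while even powers of $U$ contribute only analytic terms. Applying the Transfer Theorem~\ref{thm:transfer} with singular exponent $-3/2$ yields
$$g_k=[u^k]G(u)=\frac{G_3}{\Gamma(-3/2)}\,k^{-5/2}\gamma^{-k}(1+o(1))=c\,k^{-5/2}\gamma^{-k}(1+o(1)),$$
which is the claimed estimate, and a numerical evaluation gives $c\approx 0.06034$.

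The main obstacle is the bookkeeping required to propagate the singular expansion of $c(u)$ up to order $U^3$ through all the auxiliary functions and finally through the integration step, since the multiplicative constant $c$ depends on all intermediate singular coefficients. The closed-form verification of $\gamma$, by contrast, reduces to the clean identity $v^4=27/4$ obtained above and is the pleasant part of the argument.
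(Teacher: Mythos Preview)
Your proposal is correct and follows essentially the same route as the paper: eliminate the auxiliary functions in system~\eqref{eq:system-planar} to a single algebraic equation for $c(u)$, locate the branch point, verify a square-root Puiseux expansion, propagate it to $d(u)$, integrate $6uG'(u)=d(u)+c(u)$ on a dented domain, and apply the Transfer Theorem. The substitution $v=1+c$ you use gives the considerably cleaner relation $u^2v^6+8uv^3-4(v^2-1)=0$ (in place of the paper's degree-six polynomial in $c$ with mixed coefficients), which makes the closed-form identification $v^4=27/4$ and hence $\gamma=\frac{4}{27}\sqrt{6\sqrt{3}-9}$ more transparent, but the overall argument is the same.
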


\begin{proof}
From the system of equations~\eqref{eq:system-planar} we get a single equation for $c(u)$ by successive elimination. Computations give that $c(u)$ satisfies the algebraic equation
\begin{eqnarray*}
0   &=&     8u+u^2+(-8+24u+6u^2)c(u)+(-4+24u+15u^2)c(u)^2+(8u+20u^2)c(u)^3\\
    &&      +15u^2 c(u)^4+6u^2 c(u)^5+u^2 c(u)^6.
\end{eqnarray*}
We refer to Section VII.8 in \cite{flajolet2009analytic}) for more details. From this equation we deduce that the dominant singularity of $c(u)$ is the smallest positive root of the equation
$$ 19683u^4+7776u^2-256=0,$$
whose value is equal to $\gamma= \frac{4}{27}\sqrt{6\sqrt{3}-9} \approx 0.17481$ ($-\gamma$ is also a root of this polynomial with the same absolute value, but it is easy to check that $c(u)$ is analytic at $u=-\gamma$). Using now Newton's Polygon Method (see Section VII.7 of \cite{flajolet2009analytic}) we get that $c(u)$ has a Puiseux's expansion of the following form in a domain dented at $u=\gamma$:
\begin{equation} \label{eq:expansion_C}
c(u)=c_0+c_1 U+ \bigO(U^2),
\end{equation}
where $U=\sqrt{1-u/\gamma}$, $c_0 \approx 0.61185$ and $c_1 \approx -1.08766$. Using Expansion~\eqref{eq:expansion_C} we deduce that $D(u)$ admits the following singular expansion in a domain dented at $u=\gamma$:
$$d(u)=d_0+d_1 U+ \bigO(U^2) $$
with $d_0 \approx 0.13306$ and $d_1\approx -0.19574$. Finally, using that $6u\frac{d G(u)}{du}=  d(u)+c(u)$ we conclude that the dominant singularity of $G(u)$ is located at $u=\gamma$. The proposition finally follows by integration of the Puiseux's series (by applying Theorem VI.9 from \cite{flajolet2009analytic}) in order to get the singular expansion of $G(u)$ and by the application of the Transfer Theorem (Theorem~\ref{thm:transfer})
\end{proof}

We proceed with the study of $\overline{g}_k$. For this purpose, we refine the system of equations \eqref{eq:system-planar} in the following way: we denote by
$\overline{G}(u)$ the EGF associated with connected weighted cubic SP multigraphs carrying a distinguished spanning tree (as before, $u$ marks the excess).
We study decompositions for  $6u\frac{d \overline{G}(u)}{du}$, which correspond to the EGF of connected weighted cubic SP multigraphs carrying a distinguished spanning tree where an edge is distinguished and oriented.

In the following expressions, we use the subindex $0$ to denote that the distinguished and oriented edge belongs to the spanning tree, while we use the subindex $1$ to denote the opposite. Using this convention, we write $d_0(u)$, $b_0(u)$, $s_0(u)$ and $p_0(u)$ for the cases where this distinguished edge is a bridge, a loop, defines a series construction or a parallel construction, respectively, in such a way that the distinguished edge belongs to the spanning tree of the initial structure. Analogue definitions are done for $d_1(u)$, $b_1(u)$, $s_1(u)$ and $p_1(u)$. $c_0(u)$ and $c_1(u)$ are auxiliary families.

Using the same arguments used to obtain the system of equations~\eqref{eq:system-planar} yields the following, more involved system of equations:
\begin{eqnarray}\label{eq:system-planar2}
6u\frac{d \overline{G}(u)}{du}&=&  d_0(u)+c_0(u) + d_1(u) + c_1(u),\nonumber\\
c_0(u)				&=& b_0(u) + s_0(u) + p_0(u), \\
c_1(u)				&=& b_1(u) + s_1(u) + p_1(u), \nonumber\\
b_0(u)				&=& 0, \nonumber\\
b_1(u)				&=& \frac{u}{2}\big(d_0(u)+c_0(u)\big) + u \big(d_1(u)+c_1(u)\big)+\frac{u}{2}, \nonumber\\
d_0(u)				&=& \frac{b_1(u)^2}{u}, \nonumber\\
d_1(u)				&=& 0, \nonumber\\
s_0(u)				&=& \big(c_1(u)-s_1(u)\big)c_1(u) + \big(c_0(u)-s_0(u)\big)c_1(u) + \big(c_1(u)-s_1(u)\big)c_0(u), \nonumber\\
s_1(u)				&=& \big(c_1(u)-s_1(u)\big)c_1(u), \nonumber\\
p_0(u)				&=& \frac{u}{2}+uc_0(u)+2uc_1(u)+\frac{1}{2}u c_0(u)^2, \nonumber\\
p_1(u)				&=& u + uc_0(u) + 3uc_1(u) +uc_0(u)c_1(u)+2uc_1(u)^2.\nonumber
\end{eqnarray}

We now analyse this system of equations similarly to what we did when studying System~\eqref{eq:system-planar}:

\begin{proposition}\label{prop:asympt_g_k_bar}
The number $\overline{g}_k= [u^k]\overline{G}(u)$ of weighted connected cubic SP multigraphs carrying a spanning tree and with excess equal to $k$ has the following asymptotic estimate:
\begin{equation*}
\overline{g}_k = \overline{c} k^{-5/2} \overline{\gamma}^{-k} (1+o(1)),
\end{equation*}
where $\overline{\gamma} \approx 0.06709$ and $\overline{c} \approx 0.06634$.
\end{proposition}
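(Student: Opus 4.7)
The plan is to mirror, step by step, the proof of Proposition~\ref{prop:asympt_g_k}, but applied to the enriched system~\eqref{eq:system-planar2}. First I would reduce the system by successive elimination. The last six equations of~\eqref{eq:system-planar2} express $b_0,b_1,d_0,d_1,p_0,p_1,s_0,s_1$ as polynomial (or rational) functions of $c_0,c_1$ and $u$; substituting these back into the definitions $c_0 = b_0+s_0+p_0$ and $c_1 = b_1+s_1+p_1$ yields a system of just two polynomial equations in $(u,c_0,c_1)$. Eliminating $c_0$ via a resultant (computed in \texttt{Maple}) produces a single algebraic equation $\Psi(u,c_1)=0$, analogous to the degree-six equation obtained for $c(u)$ in the proof of Proposition~\ref{prop:asympt_g_k}.

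Next I would locate the dominant singularity of $c_1(u)$, and hence of $c_0(u)$, by solving the discriminant system $\Psi(u,c_1)=0$, $\Psi_{c_1}(u,c_1)=0$. Eliminating $c_1$ yields a polynomial in $u$ alone whose smallest positive real root is the candidate value $\overline{\gamma}$; a numerical check against the inequality $\overline{\gamma} < \gamma$ (which must hold on combinatorial grounds, since the class carrying a distinguished spanning tree grows faster than the unmarked one) confirms the right root and gives $\overline{\gamma} \approx 0.06709$. Newton's polygon method, exactly as applied in Proposition~\ref{prop:asympt_g_k}, then shows that $c_1$ admits a Puiseux expansion of square-root type
\begin{equation*}
c_1(u) = c_{1,0} + c_{1,1}\,\overline{U} + \bigO(\overline{U}^2), \qquad \overline{U} = \sqrt{1 - u/\overline{\gamma}},
\end{equation*}
in a domain dented at $u=\overline{\gamma}$, and from the polynomial relations linking $c_0,d_0,b_1$ to $c_1$ the functions $c_0(u)$ and $d_0(u)$ inherit analogous square-root singular expansions at the same point (aperiodicity, evident from the combinatorics, gives uniqueness of the dominant singularity on the circle of convergence).

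From the first equation of~\eqref{eq:system-planar2} we have $6u\,\overline{G}'(u) = d_0(u) + c_0(u) + d_1(u) + c_1(u) = d_0(u) + c_0(u) + c_1(u)$, since $d_1 \equiv 0$. Using that $\overline{U}$-type singular expansions can be integrated term by term on a dented domain (Theorem VI.9 in \cite{flajolet2009analytic}), $\overline{G}(u)$ itself has a singular expansion of the form
\begin{equation*}
\overline{G}(u) = \overline{G}_0 + \overline{G}_2 \overline{U}^2 + \overline{G}_3 \overline{U}^3 + \bigO(\overline{U}^4),
\end{equation*}
where the coefficient $\overline{G}_3$ is the integrated transform of the leading square-root coefficient of $d_0+c_0+c_1$, multiplied by the appropriate factor coming from division by $6u$ and from $\int \overline{U}^{1}\, du$. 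A direct application of Theorem~\ref{thm:transfer} with singular exponent $-3/2$ then gives
\begin{equation*}
\overline{g}_k = [u^k]\overline{G}(u) = \frac{\overline{G}_3}{\Gamma(-3/2)}\, k^{-5/2}\, \overline{\gamma}^{-k}(1+o(1)),
\end{equation*}
so that $\overline{c} = \overline{G}_3/\Gamma(-3/2) \approx 0.06634$ after numerical evaluation in \texttt{Maple}.

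The main technical obstacle is the first step: carrying out the elimination cleanly enough to identify $\Psi(u,c_1)$ and its discriminant polynomial, and then verifying that the relevant smallest positive root of the discriminant is genuinely the singularity of $c_1(u)$ (and not a spurious root introduced by clearing denominators or arising from another branch). As in the proof of Proposition~\ref{prop:asympt_g_k}, this requires both a combinatorial bound ($\overline{\gamma}\le \gamma$) and an analyticity check that $c_1$ remains regular at all smaller-modulus roots of the discriminant; once this is done, the square-root nature of the branch point and the subsequent integration and transfer steps are routine.
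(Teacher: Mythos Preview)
Your proposal is correct and follows essentially the same approach as the paper: reduce system~\eqref{eq:system-planar2} by elimination to a single algebraic equation, locate the dominant singularity via the discriminant, extract a square-root Puiseux expansion by Newton's polygon, and then integrate and apply the Transfer Theorem. The only cosmetic difference is that the paper eliminates down to an algebraic equation in $c_0(u)$ (of degree~8) rather than in $c_1(u)$, and then recovers the expansions of $c_1$, $b_1$, $d_0$ from that of $c_0$; either choice works and leads to the same constants.
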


\begin{proof}
The arguments are exactly the same as in Proposition \ref{prop:asympt_g_k}. From the system of equations~\eqref{eq:system-planar2} we get the following algebraic equation satisfied by $c_0(u)$:
\begin{eqnarray*}
0&=&121u^3+2304u+7656u^2\\
  &&+(51696u-26620u^2-968u^3-4608)c_0(u)\\
  &&+(-384+34532u+30888u^2+3388u^3)c_0(u)^2\\
  &&+(256-1392u-10516u^2-6776u^3)c_0(u)^3\\
  &&+(32-1608u-2376u^2+8470u^3)c_0(u)^4\\
  &&+(-352u-132u^2-6776u^3)c_0(u)^5\\
  &&+(4u+1144u^2+3388u^3)c_0(u)^6\\
  &&+(-44u^2-968u^3)c_0(u)^7+121u^3c_0(u)^8.
\end{eqnarray*}
The singular point now is located at $\overline{\gamma} \approx 0.06709$, and the Puiseux's expansion of $c_0(u)$ around $u=\overline{\gamma}$ is of the form
$$c_0(u)= c_{0,0}+c_{0,1} \overline{W}+\bigO(\overline{W}^2),$$
where $\overline{W}=\sqrt{1-u/\overline{\gamma}}$, $c_{0,0} \approx 0.29896$, and $c_{0,1} \approx -0.47032$. We can then directly obtain the Puiseux's expansion of $c_1(u)$, $b_1(u)$ and $d_1(u)$ from the expansion of $c_0(u)$.  By integrating the equation $6u \frac{d}{du} \overline{G}(u)=c_0(u)+c_1(u)+d_0(u)$ and applying the Transfer Theorem (Theorem~\ref{thm:transfer}) we get the estimate for $\overline{g}_k$ as it is claimed.
\end{proof}

We can now complete the proof of Theorem \ref{thm:excess}.

\begin{proof}[Proof of Theorem ~\ref{thm:excess}]
Due to Proposition \ref{prop:estimate} and Equation \eqref{eq:estimate2}, the value $\mathbb{E}[X_{n,k}]$ is
$$\frac{[x^n] \overline{C}_k(x)}{[x^n]C_k(x)}=\frac{\overline{g}_k \frac{1}{2^{2k+1/2}} \frac{n^{2k-1/2}}{\Gamma(2k+1/2)} e^n}{g_k\frac{1}{2^{3k/2}} \frac{n^{3k/2-1}}{\Gamma(3k/2)} e^n}(1+o(1))=\frac{\overline{g}_k}{g_k} \frac{\Gamma(3k/2)}{\Gamma(2k+1/2)} \left(\frac{n}{2}\right)^{\frac{k+1}{2}}  (1+o(1)).$$

Using now the estimates obtained for $g_k$ and $\overline{g}_k$ in Propositions  \ref{prop:asympt_g_k} and \ref{prop:asympt_g_k_bar} we obtain the result (and the constants) as claimed in the statement of the theorem.\end{proof}

\section{Concluding remarks and open problems}\label{sec:concluding}

In this paper, we have exploited the use of generating functions joint with analytic combinatorics to get exact expressions for the counting generating functions for connected SP graphs (2-connected SP graphs, 2-trees, and SP graphs with fixed excess) with a distinguished spanning tree. As a consequence, we were able to get very precise estimates for the expected number of spanning trees in an object chosen uniformly at random from the family under study. These techniques could be exploited in related families of graphs, as for instance the family of $k$-trees with $k\geq 3$. Even though the analytic techniques used in this paper allow access to higher moments, unfortunately they do not provide a technique to determine the limit law of the number of spanning trees in SP graphs.

With the techniques used in this paper one can also determine, with a bit more effort, the counting generating function of connected SP graphs carrying a distinguished spanning forest with a given number of components. From this one can derive the expected number of components of a random spanning forest in a random connected SP graph. In particular, the main difficulty in this situation is that for encoding networks carrying a distinguished spanning forest, one needs more auxiliary classes than in the restricted case of spanning trees. Roughly speaking, one needs to define classes of SP networks that carry a spanning forest and such that either the two poles are contained in the same component or in different components. 
The analysis of the generating function associated with networks as well as determining and analysing the generating functions associated with 2-connected and connected SP graphs carrying a spanning forest can be done similarly to the case of spanning trees. In the context of planar maps, let us mention that Bousquet-M\'elou and Courtiel~\cite{BousquetMelou15} recently investigated the enumeration of regular planar maps carrying a distinguished spanning forest.

Finally, we would like to discuss similar results on planar graphs.
Following the lines of~\cite{GiNoyRue2013}, the tools developed in the present paper can be extended easily to graphs defined by 3-connected components. When the family under consideration is defined by a finite number of 3-connected graphs, then the techniques used so far can be exploited to get analogue results. This would include, for instance, the family of graphs $\mathrm{Ex}(W_4)$ or $\mathrm{Ex}(W_5)$, where $W_4$ and $W_5$ are the wheel graphs with 4 and 5 external vertices, respectively (see ~\cite{GiNoyRue2013}). In this context, a very interesting question is to extend the results of SP graphs to the random planar graph model. In this situation, the family of 3-connected components is infinite, and one requires extra results arising from map enumeration in order to control counting formulas for T-bricks. Although the number of maps carrying a spanning tree is a well-known fact (see e.g.~\cite{Mullin67}), nothing is known when dealing with 3-connected planar graphs. The problem of getting the expected number of spanning trees in a planar graph chosen uniformly at random will be considered in future investigations.

$$\,$$
\paragraph{\textbf{Acknowledgements}.} 
The authors thank two anonymous referees for their comments that have helped improving a lot the content of the paper and also for pointing out the existing bibliography in the context of maps.
Parts of this work were done when the first author was visiting \emph{FU Berlin}
and when the second author was visiting \emph{TU Hamburg-Harburg}. Both would like to thank the
members of the research groups of Tibor Szab\'o and Anusch Taraz for their warm hospitality.

\bibliographystyle{abbrv} 	
\bibliography{spanningtrees}

\end{document}